\title{\large{\bf Weighted Orlicz regularity for fully nonlinear elliptic equations with oblique derivative at the boundary via asymptotic operators}}
\author{\it by \smallskip \\
 Junior da S. Bessa\footnote{\noindent Universidade Federal do Cear\'{a}. Department of Mathematics. Fortaleza - CE, Brazil. \noindent \texttt{E-mail address: \url{junior.bessa@alu.ufc.br}}}}
\newlength{\hchng}
\newlength{\vchng}
\def \R {\mathbb{R}}
\def \loc {\mathrm{loc}}
\def \Leb {\mathcal{L}^n}
\newcommand{\defeq}{\mathrel{\mathop:}=}
\newtheorem{theorem}{Theorem}[section]
\newtheorem{lemma}[theorem]{Lemma}
\newtheorem{proposition}[theorem]{Proposition}
\theoremstyle{definition}
\newtheorem{definition}[theorem]{Definition}
\theoremstyle{remark}
\newtheorem{remark}[theorem]{Remark}
\numberwithin{equation}{section}
\newcommand{\intav}[1]{\mathchoice {\mathop{\vrule width 6pt height 3 pt depth  -2.5pt
\kern -8pt \intop}\nolimits_{\kern -6pt#1}} {\mathop{\vrule width
5pt height 3  pt depth -2.6pt \kern -6pt \intop}\nolimits_{#1}}
{\mathop{\vrule width 5pt height 3 pt depth -2.6pt \kern -6pt
\intop}\nolimits_{#1}} {\mathop{\vrule width 5pt height 3 pt depth
-2.6pt \kern -6pt \intop}\nolimits_{#1}}}
\begin{document}
\maketitle

\begin{abstract}

We prove weighted Orlicz-Sobolev regularity for fully nonlinear elliptic equations with oblique boundary condition under asymptotic conditions of the following problem
$$
\left\{
\begin{array}{rclcl}
 F(D^2u,Du,u,x) &=& f(x)& \mbox{in} &   \Omega \\
 \beta \cdot Du + \gamma u&=& g &\mbox{on}& \partial \Omega,
\end{array}
\right.
$$
where $\Omega$ is a bounded domain in $\mathbb{R}^{n}$($n\ge 2$), under suitable assumptions on the source term  $f$, data $\beta, \gamma$ and $g$. Our approach guarantees such estimates under conditions where the governing operator $F$ does not require a convex (or concave) structure. We also deal with weighted Orlicz-type estimates for the obstacle problem with oblique derivative condition on the boundary. As a final application, the developed methods provide weighted Orlicz-BMO regularity for the Hessian, provided that the source lies in that space and in weighted Orlicz space associated.

\medskip
\noindent \textbf{Keywords:} Hessian estimates, Weighted Orlicz spaces, Obstacle problem, oblique boundary conditions, BMO estimates.
\vspace{0.2cm}

\noindent \textbf{AMS Subject Classification:} 35J25, 35J60, 35B65, 35R35.
\end{abstract}
%\tableofcontents

\section{Introduction}

\hspace{0.4 cm} We established regularity for fully nonlinear elliptic equations  with oblique boundary condition of the form
\begin{equation}\label{1.1}
\left\{
\begin{array}{rclcl}
 F(D^2u,Du,u,x) &=& f(x)& \mbox{in} &   \Omega \\
\mathfrak{B}(Du,u,x) &=& g(x) &\mbox{on}& \partial \Omega,
\end{array}
\right.
\end{equation}
where  $\Omega$ is a bounded domain in $\mathbb{R}^{n}$ for $n\ge 2$ with regular boundary, with the term source $f$ in the context of weighted Orlicz spaces (see Definition \ref{weightedorliczspaces}). In this case, we are dealing with a generalization of the $W^{2,p}$ interior estimates of fully nonlinear
elliptic equations that were obtained by Caffarelli in \cite{Caff1} in the context of problems with oblique boundary conditions. We assume that  $F:\textit{Sym}(n)\times \R^n \times \R \times \Omega \to \R$  is a uniformly elliptic operator with ellipticity constants $\lambda$ and $\Lambda$, i.e., there are constants $0 < \lambda \le \Lambda < \infty$ such that 
\begin{equation}\label{Unif.Ellip.}
\lambda\|\mathrm{N}\|\leq F(\mathrm{M}+\mathrm{N},\varsigma , r, x)-F(\mathrm{M}, \varsigma, r, x) \leq \Lambda \|\mathrm{N}\|
\end{equation}
 for every matrices $\mathrm{M}, \mathrm{N} \in \textit{Sym}(n)$ with $\mathrm{N} \ge 0$ (in the usual partial ordering on symmetric matrices) and points $(\varsigma, r, x)\in \mathbb{R}^n \times \R \times \Omega $. Equation \eqref{1.1} has a great interest of study by researchers in mathematics and other areas, we can mention to \cite{Bessa}, \cite{BJ}, \cite{BJ1},  \cite{daSR19} and \cite{PT} works that studied this type of problem along the lines of motivation of Caffarelli's famous work \cite{Caff1}. 
 The operator $\mathfrak{B}$ governs the oblique boundary condition, more precisely we have that $\mathfrak{B}$ is prescribed in terms of directional derivative with respect to a vector field $\beta(x) : \partial \Omega \rightarrow \mathbb{R}^n$, defined on $\partial \Omega$. In general, an oblique boundary condition is given by the following form:
\begin{equation} \label{Obl}
	\mathfrak{B}(Du, u,x) := \beta(x) \cdot Du + \gamma(x)u,
\end{equation}
where  $g$, $\beta$ and $\gamma$ are functions.   In the theory of the tangential oblique derivative problem, the word oblique means that $|\beta(x) \cdot \nu(x)| \ge \mu_0 >0$ on $\partial \Omega$, where $\nu(x)$ denotes the inner normal of $\Omega$.  This tells us that $\beta$ is not in a direction orthogonal to $\nu$ in $\partial\Omega$ and thus  $\beta$ is in a "sloping direction" with respect to the normal $\nu$. This imposition is due to the complementary condition of Shapiro–Lopatinskii, which states that the problem \eqref{1.1} is well posed (i.e., regular, non-degenerate, non-singular) if, and only if, $\beta(x)\cdot \nu(x)$  is non-null in $\partial\Omega$ (For more details, cf. \cite{MaPaVi}).  This boundary condition appears, in fact, as the interpretation of the standard capillary condition in the level set approach to motion of hypersurfaces by curvature (when $\beta$ makes an angle more than some fixed level on $\partial \Omega$) (see for instance \cite{Sato} and \cite{Giga}). The previous comment motivates our considerations in the oblique condition, we always assume that
$$
\beta(x) \cdot \nu(x) \ge \mu_0 \quad \textrm{on} \quad \partial \Omega \quad \textrm{and} \quad \|\beta\|_{L^{\infty}(\partial \Omega)} \le 1
$$
for some positive constant $\mu_0$.  The most familiar case in the mathematical world is the Neumann boundary condition, that is, the case $\beta = \nu(x)$ and $\gamma =0$ where $\nu(x)$ is the outward normal vector of $\partial \Omega$. The Robin boundary condition, i.e., when $\beta = \nu(x)$ and $\gamma \not= 0$ is also an example of an oblique boundary condition.

The justification for so many investigations regarding the oblique condition problem is due to its appearance in most general contexts than pure mathematics, as for instance Mathematical Physics and Applied Mathematics. An illustration of this is given in the theory of Markov processes (e.g., Brownian motion). The oblique operator $\mathfrak{B}$ governs such a problem where the equation that describes such a phenomenon is $\mathfrak{B}(Du,u,x)=g$ being the following interpretation: the term $\beta\cdot Du$ describes the reflection of the process along the $\beta$ field, while the second term $\gamma u$ is related to the absorption phenomenon. We also mention applications in celestial body mechanics, stochastic control theory, capillary free surfaces, reflected shocks in transonic flows, and so on. We recommend Lieberman's book \cite{Lieberman} to readers for a deeper understanding of these concepts and applications, mainly in the area of PDE's.

The $W^{2,p}$ regularity theory for nonlinear elliptic equations is a great research attraction until the present day. At the end of the 1980s, Caffarelli in his famous work \cite{Caff1} established the bases for the theory $W^{2,p}$ for
nonlinear elliptic equations for $p > n$ using the perturbation method. Escauriaza extended such estimates to $n-\varepsilon < p$ in \cite{Es93} in the year 1993, where $\varepsilon\in (0, n/2)$ can
be teaced back to \cite{Coi}. In 1997, \'Swiech provided $W^{2,p}$ estimates for $L^{p}$-viscosity solutions by  $p \geq n-\varepsilon$ (see \cite[Theorem 3.1]{Sw}). Recently, Winter \cite{Winter} obtained global estimates analogous to interior estimates in \cite{Sw}. It should be noted that all roles mentioned above depend on $C^{1,1}$ estimates for equations with constant coefficients. A priori $W^{2,p}$ estimates when it does not impose any extra assumption of regularity in the nonlinearity of $F$ was obtained by Pimentel and Teixeira in \cite{PT} for asymptotically convex models. Recently, da Silva and Ricarte in \cite{daSR19} generalized Winter’s work \cite{Winter} with an asymptotic approach. We can also highlight the work of Byun et al. in \cite{BLOK} in the context of weighted Orlicz spaces among many other works.

The study of this theory for fully nonlinear equations with oblique boundary condition improves many recent works of great relevance. For example, we can cite the work \cite{BJ1} of Byun et al. on estimates $W^{2,p}$ for solutions of $\eqref{1.1}$ when $F$ is convex and $\gamma\equiv g\equiv 0$. As early as 2021, Zhang et al. in \cite{ZZZ21} generalized the work \cite{BJ1} in an asymptotic context. A little later in 2022, Bessa et al. in \cite{Bessa}, developed a very general version of estimates of the type $W^{2,p}$ for the \eqref{1.1} problem in an asymptotic context without requiring any extra regularity assumption, e.g. concavity, convexity or smoothness hypothesis. Recently Bessa and Ricarte \cite{BessaRicarte} in 2023, treated in this asymptotic line weighted Lorentz-type estimates for the \eqref{1.1} problem. 

Motivated by this, our purpose is to obtain estimates for the problem \eqref{1.1} in the context of weighted Orlicz spaces with asymptotic conditions, paralleling the works \cite{Bessa}, \cite{BessaRicarte} and \cite {Lee}, and to explore some applications in different contexts, namely in the obstacle problem and in weighted Orlicz-BMO estimates. To achieve the desired, we will study the problem \eqref{1.1} under asymptotic conditions on the governing operator $F$. More precisely, our estimative depends of behavior of the fully nonlinear operator $F$ near infinity  in $Sym(n)$. The idea of an asymptotic operator is well put when we introduce the notion of \textit{Recession operator}, whose terminology comes from Giga-Sato's work \cite{GS01} on the theory of Hamilton-Jacobi PDEs.

\begin{definition}\label{DefAC}
For an operator $F$ as in \eqref{1.1}, we define the associated \textbf{recession operator} and use the notation $F^{\sharp}$ putting
\begin{equation}\label{Reces}
  \displaystyle F^{\sharp}(\mathrm{X}, \varsigma, s, x) \defeq  \lim_{\tau \to 0^{+}} \tau \cdot F\left(\frac{1}{\tau}\mathrm{X}, \varsigma, s, x\right),
\end{equation}
 for all $\mathrm{X} \in \textrm{Sym}(n)$, $\varsigma \in \mathbb{R}^n$, $s \in \mathbb{R}$ and $x \in \Omega$.
\end{definition}

In this context, we say that $F$ is an asymptotically regular fully nonlinear elliptic operator if $F^{\sharp}$ exists and is a fully nonlinear elliptic operator. Then, the notion of asymptotically regular problem goes back to chipot-evans work \cite{Ch} treating about certain problems in calculating variations. For more details about the recession operator and issues involving the asymptotic condition, we recommend \cite{Bessa}, \cite{daSR19}, \cite{PT}, \cite{ZZZ21} and \cite{BOW16}.

%The asymptotic condition that we will deal with here is a great differential in the ideas of the present work

The approximation via asymptotic operators gives us a great differential in the ideas of the present work since, instead of imposing structural hypothesis on the $F$ governing operator, we will make assumptions on the recession operator $F^{\sharp}$ (for example, convexity/concavity or \textit{ suitable a priori} estimates). In this line of reasoning, through a geometric tangential analysis, we can obtain such good regularity properties, so that the solutions associated with the original operator inherit this regularity through compactness and stability processes. With this idea, under certain hypothesis, a theory of regularity $W^{2,p}$ is possible and, therefore, linked to the context of weighted Orlicz regularity.

It is worth to mention that problems in the same line of research of the present work have been gaining prominence, see \cite{Bessa}, \cite{BessaRicarte}, \cite{BJ}, \cite{BJ1}, \cite{BLOK}, \cite{MF}, \cite{Kry13},\cite{Kry17}, \cite{PT}, \cite{JP}, \cite{ST} and \cite{ZhangZhenglorentz} for an incomplete list of contributions.

\subsection{Main Results}
\hspace{0.4cm} Here we will set up some notations and appreciate some necessary results to enunciate the main results and for their use in the rest of the manuscript.

In the present text, for a given $r >0$, we denote by $\mathrm{B}_r(x)$ the ball of radius $r$ centered at $x=(x^{\prime},x_n) \in \R^n$, where $x^{\prime}=(x_1,x_2,\ldots, x_{n-1}) \in \R^{n-1}$. We write $\mathrm{B}_r = \mathrm{B}_r(0)$ and $\mathrm{B}^+_r = \mathrm{B}_r \cap \mathbb{R}^n_+$. Also, we write $\mathrm{T}_r \defeq \{(x^{\prime},0) \in \mathbb{R}^{n-1} : |x^{\prime}| < r\}$ and $\mathrm{T}_r(x_0) \defeq \mathrm{T}_r + x^{\prime}_0$, where $x^{\prime}_0 \in \mathbb{R}^{n-1}$. Finally, $\textrm{Sym}(n)$ will denote the set of all $n \times n$ real symmetric matrices.

We denote by
$$
	\mathcal{Q}^n_r(x_0) \colon= \left( x_0-\frac{r}{2}, x_0 + \frac{r}{2}\right)^d
$$
the $d$-dimension cube of side-length $r$ and center $x_0$. In the case $x_0=0$, we will write $\mathcal{Q}^d_r$.  Furthermore, if $d=n$, we will write $\mathcal{Q}_r(x_0)$.

Recall that a function $\Phi:[0,+\infty)\to [0,+\infty)$ is said to be an $N$-function if it is convex, increasing and continuous, and satisfies that $\Phi(t)>0, \ \forall t>0$, $\Phi(0)=0$,
\begin{eqnarray*}
\lim_{t \to 0^{+}}\frac{\Phi(t)}{t}=0 \  \mbox{and} \ \lim_{t\to+\infty}\frac{\Phi(t)}{t}=+\infty.
\end{eqnarray*}
In this context, we say that an $N$-function $\Phi$ satisfies the $\Delta_{2}$-condition (resp. $\nabla_{2}$-condition) if there is a constant $\mathrm{K}>1$ (resp. $\mathrm{L}>1$ ) such that
\begin{eqnarray*}
\Phi(2t)\leq \mathrm{K}\Phi(t) \ \left(\mbox{resp.} \ \Phi(t)\leq \frac{1}{2\mathrm{L}}\Phi(\mathrm{L}t)\right), \ \forall t>0.
\end{eqnarray*}
We will use the notation $\Phi\in\Delta_{2}$ (resp. $\Phi\in \nabla_{2}$) to indicate that $\Phi$ complies with $\Delta_{2}$-condition (resp. $\nabla_{2}$-condition). Furthermore, the notation $\Phi\in\Delta_{2}\cap \nabla_{2}$ will be used to indicate that $\Phi\in\Delta_{2}$ and $\Phi\in\nabla_{2}$.
For a function $\Phi\in\Delta_{2}\cap\nabla_{2}$, we can define its lower index by
\begin{eqnarray*}
i(\Phi)=\lim_{t \to 0^{+}}\frac{\log(h_{\Phi}(t))}{\log t}=\sup_{0<t<1}\frac{\log(h_{\Phi}(t))}{\log t},
\end{eqnarray*}
where
\begin{eqnarray*}
h_{\Phi}(t)=\sup_{s>0}\frac{\Phi(st)}{\Phi(s)}, \ t>0.
\end{eqnarray*}
Illustrating the above definitions, we have that the functions $\Phi(t)=t^{q}$ and $\overline{\Phi}(t)=t^{q}\log(t+1)$, for $q>1$, are $N$-functions that satisfy the condition $\Delta_{2}\cap\nabla_{2}$ with $i(\Phi)=i(\overline{\Phi})=q>1$.

On the other hand, we recall the definition of \textit{weights}. We say that  $\omega$ is a \textit{weight}, if is a locally integrable nonnegative function  that takes values in $(0,+\infty)$ almost everywhere. This case, we identify $\omega$ with the measure 
\begin{eqnarray*}
\omega(E)=\int_{E}\omega(x)dx,
\end{eqnarray*} 
for all Lebesgue measurable set $E\subset \mathbb{R}^{n}$. We say that be a weight $\omega$ belong to \textit{Muckenhoupt class} $A_{q}$ for $q\in(1,\infty)$, denoted by $\omega\in A_{q}$, if 
\begin{eqnarray*}
[\omega]_{q}\defeq \sup_{B\subset \mathbb{R}^{n}}\left(\intav{B}\omega(x)dx\right)\left(\intav{B}\omega(x)^{\frac{-1}{q-1}}dx\right)^{q-1}<\infty,
\end{eqnarray*}
where the supremum is taken over all balls $B\subset \mathbb{R}^{n}$.

The concept of $A_{p}$ classes was introduced by Muckenhoupt in the mid-1970s in \cite{Muckenhoupt} and is of central importance in modern harmonic analysis and its applications.

With the above definitions, we can talk about the main class of functions that we will work with throughout this article. 

\begin{definition}\label{weightedorliczspaces}
The \textit{weight Orlicz space} $L^{\Phi}_{\omega}(E)$ for be an $N$-function $\Phi\in \Delta_{2}\cap\nabla_{2}$, Lebesgue measurable set $E\subset \mathbb{R}^{n}$ and weighted $\omega$ is the set of all measurable functions $g$ in $E$ such that
\begin{eqnarray*}
\rho_{\Phi,\omega}(g)=:\int_{E}\Phi(|g(x)|)\omega(x)dx<+\infty.
\end{eqnarray*}
Which by the condition $\Phi\in\Delta_{2}\cap\nabla{2}$, is a reflexive Banach space when endowed with the following Luxemburg norm
\begin{eqnarray*}
\|g\|_{L^{\Phi}_{\omega}(E)}=:\inf\left\{t>0:\rho_{\Phi,\omega}\left(\frac{g}{t}\right)\leq 1\right\}.
\end{eqnarray*}
Furthermore, the \textit{weighted Orlicz-Sobolev space} $W^{k,\Phi}_{\omega}(E)$ (for an integer $k\geq0$) is the set of all measurable functions $g$ in $E$ such that all distributional derivates $D^{\alpha}g$, for multiindex $\alpha$ with length $|\alpha|=0,1,\cdots,k$ also belong to $L^{\Phi}_{\omega}(E)$ with norm is given by
\begin{eqnarray*}
\|g\|_{W^{k,\Phi}_{\omega}(E)}\defeq \sum_{j=1 }^{k}\|D^{j}g\|_{L^{\Phi}_{\omega}(E)}.
\end{eqnarray*} 
\end{definition}
It is noteworthy that weighted Orlicz spaces appear naturally in harmonic analysis, such as in the study of potentials and Riesz transforms(cf. \cite{KokiMiro} for more details).
\begin{remark}
We point out here that if $\Phi(t)=t^{q}$ for $q>1$, $L^{\Phi}_{\omega}(E)$ coincides with the classic Lebesgue space with exponent variable $L^{q}_{\omega}(E)$ as well as $W^{k,\Phi}_{\omega}(E)$ coincides with the weighted Sobolev space  $W^{k,q}_{\omega}(E)$. In particular, if $\omega=1$ it follows that $L^{\Phi}_{\omega}(E)$ and $W^{k,\Phi}_{\omega}(E)$ are the spaces from Lebesgue and Sobolev $L^{q}(E)$ and $W^{k,q}(E)$, respectively.
\end{remark}

We observe that for an $N$-function $\Phi\in\Delta_{2}\cap\nabla_{2}$, there exist two constants $p_{1}$ and $p_{2}$ such that $1<p_{1}\leq p_{2}<+\infty$
\begin{eqnarray}\label{propriedadedei(Phi)}
c^{-1}\min\{s^{p_{1}},s^{p_{2}}\}\Phi(t)\leq\Phi(st)\leq c\max\{s^{p_{1}},s^{p_{2}}\}\Phi(t), \ \forall t,s\geq 0,
\end{eqnarray}
where $c>0$ is idependent of $t$ and $s$(cf. \cite{KokiMiro}).  Thus, by the above definitions follows the following inclusions
\begin{eqnarray*}
L^{\infty}(E)\subset L^{p_{2}}_{\omega}(E)\subset L^{\Phi}_{\omega}(E)\subset L^{p_{1}}_{\omega}(E)\subset L^{1}(E).
\end{eqnarray*}
In addition to this fact, let us observe that $i(\Phi)>1$, since it is given as the supremum of all constants $p_{1}$ satisfying \eqref{propriedadedei(Phi)} for all $s\geq1$(cf. \cite{fioreza} for more details). Therefore, it makes sense to speak of Muckenhoupt's weight class $A_{i(\Phi)}$.

It is worth noting that under these conditions we still have to
\begin{eqnarray}\label{estimativadamodular}
\rho_{\Phi,\omega}(g)\leq \mathrm{C}(\|g\|_{L^{\Phi}_{\omega}(E)}^{p_{2}}+1),
\end{eqnarray}
where $\mathrm{C}>0$ is independent of $g$(cf. \cite{BLOK}).

Finally, with these definitions and facts above, we observe that we can continuously embed $L^{\Phi}_{\omega}(\Omega)$ over the some Lebesgue space via the following result proved in \cite[Lemma 5]{BLOK}
\begin{lemma}\label{mergulhoorliczlebesgue}
Let $\Phi$ be an $N$-function such that $\Phi\in\Delta_{2}\cap\nabla_{2}$, $\omega\in A_{i(\Phi)}$ and $\Omega\subset \mathbb{R}^{n}$ be a bounded. Then there exists $p_{0}$ depends only to $i(\Phi)$ and $\omega$ with $1<p_{0}<i(\Phi)$ such that $L^{\Phi}_{\omega}(\Omega)$ is continuously embedded in $L^{p_{0}}(\Omega)$. Furthermore, the following estimate is valid
\begin{eqnarray*}
\|g\|_{L^{p_{0}}(\Omega)}\leq \mathrm{C}'\|g\|_{L^{\Phi}_{\omega}(\Omega)}, \ \forall g\in L^{\Phi}(\Omega),
\end{eqnarray*}
where $\mathrm{C}'=\mathrm{C}'(n,i(\Phi),\omega)>0$ is independent of $g$.
\end{lemma}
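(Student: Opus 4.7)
The overall strategy is to exploit the self-improvement of Muckenhoupt $A_p$ weights together with the two-sided power-type growth of $\Phi$ recorded in \eqref{propriedadedei(Phi)}. I would pass through an intermediate weighted Lebesgue space $L^{p_1}_{\omega}(\Omega)$ with $p_1<i(\Phi)$, and then use Hölder's inequality together with the $A_{p_1}$ condition to land in the unweighted space $L^{p_0}(\Omega)$.

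For the first reduction, since $i(\Phi)$ is defined as a supremum of exponents $p_1$ satisfying \eqref{propriedadedei(Phi)}, I can pick $p_1\in (1,i(\Phi))$ arbitrarily close to $i(\Phi)$; by the classical self-improvement property of Muckenhoupt weights ($\omega\in A_q \Rightarrow \omega\in A_{q-\delta}$ for some $\delta>0$), choosing $p_1$ close enough to $i(\Phi)$ we may further ensure $\omega\in A_{p_1}$. To embed $L^{\Phi}_{\omega}(\Omega)\hookrightarrow L^{p_1}_{\omega}(\Omega)$, I would split the integral $\int_{\Omega}|g|^{p_1}\omega\,dx$ along $\{|g|\geq 1\}\cup\{|g|<1\}$: on $\{|g|\geq 1\}$, setting $t=1$ and $s=|g|$ in \eqref{propriedadedei(Phi)} yields $|g|^{p_1}\leq c\,\Phi(|g|)/\Phi(1)$, while on $\{|g|<1\}$ the integrand is dominated by $\omega$, which is finite over the bounded set $\Omega$. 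Combined with \eqref{estimativadamodular} and a standard homogenization (replace $g$ by $g/\|g\|_{L^{\Phi}_{\omega}}$), this produces $\|g\|_{L^{p_1}_{\omega}(\Omega)}\leq C\,\|g\|_{L^{\Phi}_{\omega}(\Omega)}$.

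For the second reduction, I would write $|g|^{p_0}=\bigl(|g|^{p_0}\omega^{p_0/p_1}\bigr)\bigl(\omega^{-p_0/p_1}\bigr)$ and apply Hölder with conjugate exponents $r=p_1/p_0$ and $r'=p_1/(p_1-p_0)$, obtaining
$$
\int_{\Omega}|g|^{p_0}\,dx\leq \left(\int_{\Omega}|g|^{p_1}\omega\,dx\right)^{p_0/p_1}\left(\int_{\Omega}\omega^{-p_0/(p_1-p_0)}\,dx\right)^{(p_1-p_0)/p_1}.
$$
The $A_{p_1}$ condition gives $\omega^{-1/(p_1-1)}\in L^{1}_{\loc}$, and the reverse Hölder inequality for $A_{p_1}$ weights upgrades this to $\omega^{-(1+\eta)/(p_1-1)}\in L^{1}_{\loc}$ for some $\eta>0$ depending only on $[\omega]_{p_1}$ and $n$. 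Choosing $p_0>1$ slightly larger than $1$ so that $p_0/(p_1-p_0)\leq (1+\eta)/(p_1-1)$ (for instance $p_0=(1+\eta)p_1/(p_1+\eta)$), the boundedness of $\Omega$ makes the second factor finite, and $p_0\in (1,i(\Phi))$ depends only on $i(\Phi)$ and $\omega$.

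The main obstacle is the delicate bookkeeping between the modular $\rho_{\Phi,\omega}$ and the Luxemburg norm: inequality \eqref{estimativadamodular} is only one-sided and not linearly homogeneous, so one must carefully normalize $g$ by $\|g\|_{L^{\Phi}_{\omega}(\Omega)}$ before invoking the lower bound in \eqref{propriedadedei(Phi)}, and then undo the normalization. Once this is handled, chaining the two embeddings $L^{\Phi}_{\omega}(\Omega)\hookrightarrow L^{p_1}_{\omega}(\Omega)\hookrightarrow L^{p_0}(\Omega)$ produces the required estimate with $C'=C'(n,i(\Phi),\omega)$.
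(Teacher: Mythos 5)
Your argument is correct. The paper itself gives no proof of this lemma---it simply cites \cite[Lemma 5]{BLOK}---and your two-step chain $L^{\Phi}_{\omega}(\Omega)\hookrightarrow L^{p_1}_{\omega}(\Omega)\hookrightarrow L^{p_0}(\Omega)$ is exactly the standard argument behind that citation: the lower bound in \eqref{propriedadedei(Phi)} at $t=1$ together with the Luxemburg normalization $\rho_{\Phi,\omega}(g/\|g\|_{L^{\Phi}_{\omega}})\le 1$ handles the first embedding (note you do not even need \eqref{estimativadamodular} once you normalize), and you correctly identified that the plain $A_{p_1}$ integrability of $\omega^{-1/(p_1-1)}$ would only yield $p_0=1$, so the reverse H\"older self-improvement of the dual weight is genuinely needed to push $p_0$ above $1$; your choice $p_0=(1+\eta)p_1/(p_1+\eta)$ checks out.
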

Throughout this manuscript, we will assume the following structural conditions on the problem $\eqref{1.1}$:

\begin{enumerate}
\item[(H1)](\textbf{Operator structure}) We assume that $F \in C^0(\text{Sym}(n), \R^n, \R, \Omega)$. Moreover, there are constants $0 < \lambda \le \Lambda$, $\sigma\geq 0$ and $\xi\geq 0$ such that
\begin{eqnarray}\label{EqUnEll}
\mathcal{M}^{-}_{\lambda,\Lambda}(\mathrm{X}-\mathrm{Y}) - \sigma |\zeta_1-\zeta_2| -\xi|r_1-r_2| &\le& F(\mathrm{X}, \zeta_1,r_1,x)-F(\mathrm{Y},\zeta_2,r_2,x) \nonumber \\
&\le& \mathcal{M}^{+}_{\lambda, \Lambda}(\mathrm{X}-\mathrm{Y})+ \sigma |\zeta_1-\zeta_2| + \xi|r_1-r_2| \label{5}
\end{eqnarray}
for all $\mathrm{X},\mathrm{Y} \in \textit{Sym}(n)$, $\zeta_1,\zeta_2 \in \mathbb{R}^n$, $r_1,r_2 \in \mathbb{R}$, $x \in \Omega$, where
\begin{equation}
\mathcal{M}^{+}_{\lambda,\Lambda}(\mathrm{X}) \defeq  \Lambda \sum_{e_i >0} e_i +\lambda \sum_{e_i <0} e_i \quad \text{and} \quad \mathcal{M}^{-}_{\lambda,\Lambda}(\mathrm{X}) \defeq \Lambda \sum_{e_i <0} e_i + \lambda \sum_{e_i >  0} e_i,
 \end{equation}
 are the \textit{Pucci's extremal operators} and $e_i = e_i(\mathrm{X})$ ($1\leq i\leq n$) denote the eigenvalues of $\mathrm{X}$. By normalization question we assume that $F(0,0,0,x)=0$ for all $x\in \Omega$. From now on, an operator fulfilling $\text{(H1)}$ will be called $(\lambda, \Lambda, \sigma, \xi)-$elliptic operator.

\item[(H2)](\textbf{Hypothesis on the data}) The data satisfy $|f|^{n} \in L^{\Phi}_{\omega}(\Omega)$ for $\Phi\in\Delta_{2}\cap \nabla_{2}$ and $\omega\in A_{i(\Phi)}$, $g, \gamma \in C^{1,\alpha}(\partial \Omega)$ with $\gamma \le 0$ and $\beta \in C^{1,\alpha}( \partial \Omega; \mathbb{R}^n)$ for some $\alpha\in (0,1)$.

\item[(H3)](\textbf{Condition on the coefficients}) Fixed $x_0\in \Omega$, we define the quantity:
$$
\Psi_{F}(x; x_0) \defeq \sup_{\mathrm{X} \in \textrm{Sym}(n)} \frac{|F(\mathrm{X},0,0,x) - F(\mathrm{X},0,0,x_0)|}{1+\|\mathrm{X}\|},
$$
which measures the oscillation of the coefficients of $F$ around $x_0$. Furthermore, we abreviate following notation  $\psi_{F}(x; 0) = \psi_F(x)$.  Finally, we assume that the oscilation function  $\Psi_{F^{\sharp}}$ is assumed to be H\"{o}lder continuous function (in the $L^{p}$-average sense) for every $\mathrm{X} \in \textrm{Sym}(n)$. This means that, there exist universal constants(that is, if it depends only on $n, \lambda, \Lambda, p, \mu_0, \|\gamma\|_{C^{1,\alpha}(\partial \Omega)}$ and $\|\beta\|_{C^{1,\alpha}(\partial \Omega)}$) $\hat{\alpha} \in (0,1)$, $\theta_0 >0$ and $0 < r_0 \le 1$ such that
$$
\left( \intav{\mathrm{B}_r(x_0) \cap \Omega} \psi_{F^{\sharp}}(x,x_0)^{p} dx \right)^{1/p} \le \theta_0 r^{\hat{\alpha}}
$$
for $x_0 \in \overline{\Omega}$ and $0 < r \le r_0$.

\item[(H4)](\textbf{ $C^{1,1}$-interior estimates}) We assume that the recession operator $F^{\sharp}$ associated with the operator $F$ exists and has a priori $C^{1,1}_{loc}$ estimates. That means that $F^{\sharp}(D^{2}h) = 0$ in $\mathrm{B}_{1}$ in the viscosity sense, implies $h\in C^{1,1}(\overline{\mathrm{B}_{\frac{1}{2}}})$ and
\begin{eqnarray*}
\|h\|_{C^{1,1}(\overline{\mathrm{B}_{\frac{1}{2}}})}\leq c_{1}\|h\|_{L^{\infty}(\mathrm{B}_{1})}
\end{eqnarray*}
for a constant $c_{1}\geq1$.

\item[(H5)] (\textbf{$C^{1,1}$-local estimates for the oblique problem}) We shall assume that the recession operator $F^{\sharp}$ there exists and fulfils a up-to-the boundary $C^{1,1}$ \textit{a priori} estimates, i.e., for $x_0 \in \mathrm{B}^{+}_{1}$ and  $g_0 \in C^{1,\alpha}(\overline{\mathrm{T}_1})$ for some $\alpha \in (0, 1)$, there exists a solution $\mathfrak{h} \in C^{1,1}(\mathrm{B}^+_1) \cap C^0(\overline{\mathrm{B}^+_1})$ of
$$
\left\{
\begin{array}{rclcl}
 F^{\sharp}(D^2 \mathfrak{h},x_0) &=& 0& \mbox{in} &   \mathrm{B}^+_1 \\
\mathfrak{B}(D\mathfrak{h},\mathfrak{h},x)&=& g_{0}(x) &\mbox{on}& \mathrm{T}_1
\end{array}
\right.
$$
such that
$$
\|\mathfrak{h}\|_{C^{1,1}\left(\overline{\mathrm{B}^{+}_{\frac{1}{2}}}\right)} \le c_{2} \left(\|\mathfrak{h}\|_{L^{\infty}(\mathrm{B}^{+}_1)}+\|g_0\|_{C^{1,\alpha}(\overline{\mathrm{T}_{1}})}\right)
$$
for some constant $c_{2}>0$.
\end{enumerate}

 We now state our first main result, which endures estimates in context of weighted Orlicz spaces for viscosity solutions to asymptotically fully nonlinear equations.
\vspace{0.4cm}

\begin{theorem}[{\bf Weighted Orlicz estimates for oblique  boundary problems}]\label{T1}
Consider  $\Omega\subset \mathbb{R}^{n}$ $(n\geq 2)$ be a bounded domain with $\partial \Omega \in C^{2,\alpha}$ $(\alpha\in(0,1))$. Assume that structural assumptions (H1)-(H5) are in force and $u$ be an $L^{p}-$viscosity solution of \eqref{1.1} where $p=p_{0}n$ for the constant $p_{0}>1$ of Lemma \ref{mergulhoorliczlebesgue}. Then, $u \in W^{2,\Upsilon}_{\omega}(\Omega)$ where $\Upsilon(t)=\Phi(t^{n})$, with the estimate 
\begin{equation*}
\|u\|_{W^{2,\Upsilon}_{\omega}(\Omega)} \le \mathrm{C}\cdot\left(\|u\|^{n}_{L^{\infty}(\Omega)}+\|f\|_{L^{\Upsilon}_{\omega}(\Omega)}+\| g\|_{C^{1,\alpha}(\partial \Omega)}   \right),
\end{equation*}
where $\mathrm{C}$ is a positive constant that depends only on $n$,  $\lambda$,  $\Lambda$, $\xi$, $\sigma$, $p_{0}$, $p_{2}$, $\Phi$, $\omega$, $\mu_{0}$, $\|\beta\|_{C^{1,\alpha}(\partial\Omega)}$, $\|\gamma\|_{C^{1,\alpha}(\partial\Omega)}$ and $\|\partial \Omega\|_{C^{2,\alpha}}$.
\end{theorem}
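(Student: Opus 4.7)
The plan is to reduce to local interior and boundary estimates via a finite covering of $\overline{\Omega}$ together with a $C^{2,\alpha}$ flattening of $\partial\Omega$, and then to refine a standard $W^{2,p}$ bound into the weighted Orlicz estimate by a level-set/good-$\lambda$ decay argument. I would first invoke Lemma \ref{mergulhoorliczlebesgue} to obtain the continuous embedding $L^{\Upsilon}_{\omega}(\Omega)\hookrightarrow L^{p_{0}n}(\Omega)$, so that hypothesis (H2) implies $f\in L^{p}(\Omega)$ with $p=p_{0}n$. Combined with the $L^{p}$-viscosity framework for $u$ and the $W^{2,p}$-theory for oblique asymptotic problems already established in \cite{Bessa} (via (H1)-(H5)), this provides an initial $W^{2,p}$-estimate that serves as the platform on which the Orlicz refinement is built.

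The analytic core is a geometric-type approximation. Using (H4) in the interior and the flattened version of (H5) near the boundary, one compares $u$, at each dyadic scale and center $x_{0}$, with a solution $\mathfrak{h}$ of the frozen recession problem $F^{\sharp}(D^{2}\mathfrak{h},x_{0})=0$ subject to the corresponding oblique data. The universal $C^{1,1}$-bound on $\mathfrak{h}$, the smallness of $\Psi_{F^{\sharp}}$ from (H3), and the compactness-stability scheme developed in \cite{Bessa} and \cite{daSR19} produce a quadratic polynomial approximation at every scale. A Caffarelli-type iteration then yields, for universal $N_{1}>1$ and $\epsilon_{0}\in (0,1)$ (with $\epsilon_{0}$ as small as we wish after tightening the oscillation $\theta_{0}$ of (H3)),
$$
\bigl|\{x\in\Omega:\, M(|D^{2}u|^{n}+|f|^{n})(x)>N_{1}^{k}\}\bigr|\le \epsilon_{0}^{k}\,|\Omega|
$$
for every $k\in\mathbb{N}$, where $M$ denotes the Hardy-Littlewood maximal operator.

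To transfer this Lebesgue-measure decay to the weighted Orlicz scale, I would use the openness of the Muckenhoupt class, which gives $\omega\in A_{q}$ for some $q<i(\Phi)$, together with the associated strong-type weighted maximal inequality and the reverse H\"older property, producing
$$
\omega\bigl(\{M(|D^{2}u|^{n})>N_{1}^{k}\}\bigr)\le C\,\epsilon_{0}^{k\delta}\,\omega(\Omega)
$$
for some $\delta>0$. Summing this estimate against $\Phi$ and exploiting $\Phi\in\Delta_{2}\cap\nabla_{2}$ (so that $\Phi(N_{1}^{k})\le K^{k}\Phi(1)$), the series $\sum_{k}\Phi(N_{1}^{k})\,\omega(\{|D^{2}u|^{n}>N_{1}^{k}\})$ converges provided $\theta_{0}$ was taken small enough that $\epsilon_{0}^{\delta}K<1$. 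This is equivalent to $\rho_{\Phi,\omega}(|D^{2}u|^{n})<\infty$, and \eqref{estimativadamodular} converts the modular bound into the Luxemburg-norm estimate claimed, with $\|Du\|_{L^{\Upsilon}_{\omega}}$ recovered by a Poincar\'e-type inequality.

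The hardest part will be the boundary analysis: after the $C^{2,\alpha}$ flattening, the transformed equation carries extra lower-order terms (absorbed via the constants $\sigma,\xi$ of (H1)) and a perturbed oblique operator whose obliqueness constant $\mu_{0}$ and $C^{1,\alpha}$-norms of $\beta,\gamma$ must be tracked explicitly, so that (H5) can be invoked on each half-ball with uniform constants. The quantitative heart of the argument is then balancing the decay rate $\epsilon_{0}^{\delta}$ against the growth constant $K$ of $\Phi$ under these intertwined structural hypotheses.
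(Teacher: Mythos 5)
There is a genuine gap at the quantitative heart of your argument: the level-set decay you claim is false as stated. You assert a pure geometric decay
\begin{equation*}
\bigl|\{x\in\Omega:\ \mathcal{M}(|D^{2}u|^{n}+|f|^{n})(x)>N_{1}^{k}\}\bigr|\le \epsilon_{0}^{k}\,|\Omega|,
\end{equation*}
and then its weighted version $\omega(\{\mathcal{M}(|D^{2}u|^{n})>N_{1}^{k}\})\le C\epsilon_{0}^{k\delta}\omega(\Omega)$. Such a decay cannot hold when $f$ is merely in $L^{\Upsilon}_{\omega}$: summed against $\Phi(N_{1}^{k})$ it would give a bound on $\rho_{\Phi,\omega}(|D^{2}u|^{n})$ depending only on norms, i.e.\ essentially exponential integrability of $|D^{2}u|^{n}$, which is stronger than the theorem and false (already the level sets of $\mathcal{M}(|f|^{n})$ do not decay geometrically for a general Orlicz source). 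The correct statement is the iterated inequality in which the contribution of the source survives at every step, as in Lemma \ref{lemma2.17} and Lemma \ref{lema2.15} of the paper: with $A^{k}$ the paraboloid sets $\mathcal{A}_{\mathrm{M}^{k}}(u,\cdot)$ and $B^{i}=\{\mathcal{M}(|f|^{n})\ge (\mathrm{C}_{0}\mathrm{M}^{i})^{n}\}$ one only gets $\omega(A^{k})\le \epsilon_{0}^{k}\omega(A^{0})+\sum_{i=1}^{k-1}\epsilon_{0}^{k-i}\omega(B^{i})$, and the double series $\sum_{k}\Phi(\mathrm{M}^{kn})\sum_{i}\epsilon_{0}^{k-i}\omega(B^{i})$ is then controlled by the weighted Orlicz maximal inequality (Lemma \ref{maximalorlicz}) together with the characterization in Proposition \ref{caracterizacaodosespacosdeorliczcompeso}. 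Your proposal omits exactly this summation of the $f$-level-set terms, which is where the hypothesis $\omega\in A_{i(\Phi)}$ and $\Phi\in\Delta_{2}\cap\nabla_{2}$ actually enter; without it the argument does not close. (Working with $\mathcal{M}(|D^{2}u|^{n})$ instead of the touching sets $\mathcal{A}_{\mathrm{M}^{k}}$ and $\Theta(u,r)$ is defensible only because you first invoke the $W^{2,p}$ platform; the paper avoids this by Lemma \ref{caracterizationofhessian}, but that choice is not the problem --- the decay estimate is.)

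A second, smaller but real, defect is how you produce the smallness needed for convergence of the series. You propose to take ``$\epsilon_{0}$ as small as we wish after tightening the oscillation $\theta_{0}$'' and require ``$\theta_{0}$ small enough that $\epsilon_{0}^{\delta}K<1$''. The theorem makes no smallness assumption on $\theta_{0}$: it is a fixed constant in (H3), and the required smallness must instead be obtained universally --- $\epsilon_{0}$ is chosen first so that $\mathrm{K}_{0}\epsilon_{0}<1$ ($\mathrm{K}_{0}$ the $\Delta_{2}$ constant at scale $\mathrm{M}^{n}$), and the smallness of $\|f\|_{L^{n}}$, of the coefficient oscillation and of $\tau$ is then achieved by the interior rescaling $\tilde{u}(y)=\kappa r^{-2}u(x_{0}+ry)$ with $r$ small, the H\"older decay $\theta_{0}r^{\hat{\alpha}}$ built into (H3) doing the work for the coefficients. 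As written, your proof establishes the estimate only under an extra smallness hypothesis on the data. Finally, note that the passage from a priori estimates to the given $L^{p}$-viscosity solution (approximation of $f$ and $g$, ABP, stability and uniqueness as in Proposition \ref{Casolp}) and the gradient estimate (which the paper takes from the oblique-problem literature rather than from a weighted Orlicz Poincar\'e inequality) are asserted rather than argued, but these are secondary to the incorrect decay estimate above.
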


We emphasize that our proof is based on the method employed in \cite{Bessa} for oblique derivate in boundary problems together with the ideas in the context of estimates in weighted Orlicz spaces that we can appreciate in \cite{BLOK} and \cite{Lee}. It is important to point out that such estimates obtained come with great aggregation to the theory of regularity, since only weighted Orlicz estimates are known for viscosity solutions of fully nonlinear equations in the interior context and with Dirichlet condition as the two cited references above. Another reason that corroborates the importance described above of Theorem \ref{T1} is that the governing operator $F$ has a weaker structure than convexity, because as in \cite{Bessa} we see examples of operators without the property of convexity\ concavity that they admit good estimates for the associated asymptotic operator.

On the other hand, with the validity of Theorem \ref{T1}, we will study obstacle problems with oblique boundary condition.  More precisely, we will show that under certain conditions solutions of this problem belong to the weighted Orlicz-Sobolev space. The obstacle problem that we will study is the following
\begin{equation} \label{obss1}
	\left\{
	\begin{array}{rclcl}
		F(D^2 u,Du,u,x) &\le& f(x)& \mbox{in} &   \Omega \\
		(F(D^2 u, Du, u,x) - f)(u-\psi) &=& 0 &\mbox{in}& \Omega\\
		u(x) &\ge& \psi(x) &\mbox{in}& \Omega\\
		\mathfrak{B}(Du,u,x) &=& g(x) &\mbox{on}& \partial \Omega,\\
	\end{array}
	\right.
\end{equation}
where $\psi\in W^{2,\Upsilon}_{\omega}(\Omega)$($\Upsilon$ as in Theorem \ref{T1}) is a given obstacle satisfying $\beta(x)\cdot D\psi+\gamma\psi \geq g$ a.e. on $\partial \Omega$. This problem has already been studied in \cite{BJ1}, $W^{2,p}$ estimates  were obtained for the problem $\eqref{obss1}$ when
$\gamma\equiv g=0$, $F$ convex and $\psi\in W^{2,p}(\Omega)$. Generalizing this work, \cite{Bessa} showed  $W^{2,p}$ estimates for the same problem \eqref{obss1} under asymptotic conditions for the operator $F$ and  \cite{BessaRicarte} studied in context of weighted Lorentz spaces. With this motivation, we will investigate along this line the regularity of the \eqref{obss1} problem in the reality of weighted Orlicz spaces.

In what follows, we will assume the following conditions:
\begin{enumerate}
	\item[(O1)] There exists a modulus of continuity $\eta: [0,+\infty) \to [0,+\infty)$ with $\eta(0)=0$, such that
	$$
	F(\mathrm{X}_1, \zeta, r,x_1) - F(\mathrm{X}_2,\zeta,r,x_2) \le \eta\left(|x_1-x_2|\right)\left[(|q| +1) + \alpha_0 |x_1-x_2|^2\right]
	$$
	for any $x_1,x_2 \in \Omega$, $\zeta \in \mathbb{R}^n$, $r \in \mathbb{R}$, $\alpha_0 >0$ and $\mathrm{X}_1,\mathrm{X}_2 \in \textrm{Sym}(n)$ satisfying
	$$
	- 3 \alpha_0
	\begin{pmatrix}
		\mathrm{Id}_n& 0 \\
		0& \mathrm{Id}_n
	\end{pmatrix}
	\leq
	\begin{pmatrix}
		\mathrm{X}_2&0\\
		0&-\mathrm{X}_1
	\end{pmatrix}
	\leq
	3 \alpha_0
	\begin{pmatrix}
		\mathrm{Id}_n & -\mathrm{Id}_n \\
		-\mathrm{Id}_n& \mathrm{Id}_n
	\end{pmatrix},	
	$$
	where $\mathrm{Id}_n$ is the identity matrix.
	\item[(O2)] $F$ is a \textit{proper operator}, i.e., $F$  fulfills the following condition,
	$$
	d\cdot (r_{2}-r_{1}) \leq F(\mathrm{X},\zeta,r_{1},x)-F(\mathrm{X},\zeta,r_{2},x),
	$$
	for any $\mathrm{X} \in \text{Sym}(n)$, $r_1,r_2 \in \mathbb{R}$, with $r_{1}\leq r_{2}$, $x \in \Omega$, $\zeta \in \mathbb{R}^n$, and some $d >0$.
\end{enumerate}

The conditions $\mbox{(O1)}$ and $\mbox{(O2})$ will be imposed in order to guarantee the \textit{Comparison Principle} for oblique derivative problems and consequently the classic Perron's Method for viscosity solutions(See \cite{Bessa} and \cite{BJ1}).

\begin{theorem}[{\bf Weighted Orlicz estimates for obstacle problems}]\label{T2}
Let $u$ be an $L^{p}$-viscosity solution of problem  \eqref{obss1}, where $p=p_{0}n$ , $F$ satisfies the structural assumptions (H1)-(H5), where $\beta,\gamma\in C^{2}(\partial \Omega)$ and (O1)-(O2), $\partial \Omega \in C^{3}$ and $\psi \in W^{2,\Upsilon}_{\omega}(\Omega)$, where $\Upsilon(t)=\Phi(t^{n})$ and $\omega\in \mathcal{A}_{i(\Phi)}$. Then, $u \in W^{2,\Upsilon}_{\omega}(\Omega)$  and 
\begin{equation*} \label{obs2}
\|u\|_{W^{2,\Upsilon}_{\omega}(\Omega)} \le \mathrm{C}\cdot \left( \|f\|_{L^{\Upsilon}_{\omega}(\Omega)}+ \|\psi\|_{WL^{2,\Upsilon}(\Omega)}  +\|g\|_{C^{1,\alpha}(\partial \Omega)} \right),
\end{equation*}
where $\mathrm{C}=\mathrm{C}(n,\lambda,\Lambda, p,p_{2},\mu_0, \sigma, \xi,\omega ,i(\Phi), \|\beta\|_{C^2(\partial \Omega)},\|\gamma\|_{C^{1,\alpha}(\partial \Omega)}, \partial \Omega, \textrm{diam}(\Omega), \theta_0)$.
\end{theorem}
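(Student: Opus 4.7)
The approach is to reduce the obstacle problem \eqref{obss1} to an unconstrained oblique boundary value problem of the form \eqref{1.1} and then invoke Theorem \ref{T1}. The core idea is that an $L^p$-viscosity solution $u$ of \eqref{obss1} must itself solve $F(D^2 u, Du, u, x) = f_u(x)$ in $\Omega$ with the same oblique boundary condition, where the effective right-hand side $f_u$ is pointwise dominated by both the original source $f$ and by $F(D^2\psi, D\psi, \psi, x)$. The latter term is controlled in $L^{\Upsilon}_\omega(\Omega)$ by virtue of the Orlicz--Sobolev regularity of $\psi$ combined with the structural bound (H1).

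First, I would appeal to Perron's method, following \cite{BJ1} and \cite{Bessa}, to produce an $L^p$-viscosity solution of \eqref{obss1}; the comparison principle needed to run Perron's procedure is furnished by assumptions (O1) and (O2). This simultaneously yields, via an ABP-type bound for oblique derivative problems together with the lower barrier $\psi$, an $L^\infty$-estimate of the form
$$
\|u\|_{L^\infty(\Omega)} \leq \mathrm{C}\bigl(\|f\|_{L^{\Upsilon}_\omega(\Omega)} + \|\psi\|_{W^{2,\Upsilon}_\omega(\Omega)} + \|g\|_{C^{1,\alpha}(\partial\Omega)}\bigr).
$$
The crux of the argument, and what I expect to be the main obstacle, is a \emph{reduction lemma} stating that $u$ is an $L^p$-viscosity solution of
$$
F(D^2 u, Du, u, x) = f_u(x) \ \text{in}\ \Omega, \qquad \mathfrak{B}(Du, u, x) = g(x)\ \text{on}\ \partial \Omega,
$$
for some measurable $f_u$ with $|f_u(x)| \leq |f(x)| + |F(D^2\psi(x), D\psi(x), \psi(x), x)|$ almost everywhere. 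Heuristically, on the non-contact set $\{u > \psi\}$ the complementarity condition forces $F(D^2u, Du, u, x) = f(x)$, while on the contact set $\{u = \psi\}$ one has $D^2u = D^2\psi$ and $Du = D\psi$ almost everywhere (both functions coincide there and lie in $W^{2,1}$), giving $F(D^2u, Du, u, x) = F(D^2\psi, D\psi, \psi, x)$. Promoting this almost-everywhere identity to a genuine $L^p$-viscosity statement is delicate and typically requires smooth approximation of $\psi$ together with the stability of $L^p$-viscosity solutions and the penalization that underlies Perron's construction.

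With the reduction lemma in hand, structural condition (H1) applied pointwise yields
$$
|F(D^2\psi(x), D\psi(x), \psi(x), x)| \leq \Lambda \|D^2\psi(x)\| + \sigma |D\psi(x)| + \xi |\psi(x)|,
$$
hence the triangle inequality in $L^{\Upsilon}_\omega(\Omega)$ gives $\|f_u\|_{L^{\Upsilon}_\omega(\Omega)} \leq \|f\|_{L^{\Upsilon}_\omega(\Omega)} + \mathrm{C}\|\psi\|_{W^{2,\Upsilon}_\omega(\Omega)}$. Since $u$ now solves an oblique derivative problem of type \eqref{1.1} with source term $f_u \in L^{\Upsilon}_\omega(\Omega)$, Theorem \ref{T1} delivers $u \in W^{2,\Upsilon}_\omega(\Omega)$ with
$$
\|u\|_{W^{2,\Upsilon}_\omega(\Omega)} \leq \mathrm{C}\bigl(\|u\|^n_{L^\infty(\Omega)} + \|f_u\|_{L^{\Upsilon}_\omega(\Omega)} + \|g\|_{C^{1,\alpha}(\partial\Omega)}\bigr).
$$
Combining this with the $L^\infty$-bound for $u$ and the estimate for $\|f_u\|_{L^{\Upsilon}_\omega(\Omega)}$ established above produces the desired final inequality and concludes the proof.
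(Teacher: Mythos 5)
There are two genuine gaps. First, your \emph{reduction lemma} is precisely the hard part and, as stated, it is close to circular: to argue that $D^2u=D^2\psi$ and $Du=D\psi$ a.e.\ on the contact set $\{u=\psi\}$ you need $u$ to already possess $W^{2,1}_{loc}$ (in fact twice a.e.\ differentiable) regularity, which is essentially what the theorem is trying to prove; for a merely continuous $L^p$-viscosity solution of \eqref{obss1} no such pointwise identification of Hessians is available, and "promoting the a.e.\ identity to an $L^p$-viscosity statement" is not a technical afterthought but the whole content of the proof. The paper avoids this entirely: it never derives an equation satisfied by $u$ itself. Instead it builds solutions $u_\varepsilon$ of the penalized problems \eqref{5.3} by a Schauder fixed-point argument (each frozen problem being solvable in $W^{2,\Upsilon}_\omega(\Omega)$ by Perron's method plus Theorem \ref{Teoremasemotermou}), obtains uniform $W^{2,\Upsilon}_\omega$ bounds independent of $\varepsilon$, passes to the limit using stability and the comparison principle to check that the limit solves \eqref{obss1}, and only then invokes uniqueness of the obstacle problem to conclude that the limit coincides with $u$. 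The regularity of $u$ is inherited from the approximating family, not deduced from an equation for $u$.

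Second, even if you grant the reduction, your final combination does not yield the stated estimate. Theorem \ref{T1} carries the term $\|u\|^{n}_{L^\infty(\Omega)}$ on the right-hand side, so inserting your $L^\infty$ bound produces an inequality that is polynomial of degree $n$ in $\|f\|_{L^\Upsilon_\omega(\Omega)}+\|\psi\|_{W^{2,\Upsilon}_\omega(\Omega)}+\|g\|_{C^{1,\alpha}(\partial\Omega)}$, not the linear bound claimed in Theorem \ref{T2}. This is exactly why Theorem \ref{T2} assumes $\beta,\gamma\in C^{2}(\partial\Omega)$: under that extra regularity the paper can apply Theorem \ref{Teoremasemotermou}, the version of the global estimate \emph{without} the $\|u\|^{n}_{L^\infty}$ term, inside the fixed-point scheme, which is what makes the final estimate linear in the data (the constant then depends on $\|\beta\|_{C^2(\partial\Omega)}$, as stated). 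The paper's subsequent theorem shows what happens if one follows your route through Theorem \ref{T1} instead: the conclusion degrades, with powers of the data appearing on the right-hand side. So as written, your argument would at best prove that weaker variant, not Theorem \ref{T2}.
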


The rest of this paper is organized as follows: Section \ref{section2} contains the main notations and preliminary results where stand out a review of the theory of weighted Orlicz spaces and approximation method of the recession operator. In Section \ref{section3}, we obtain interior and up to the boundary weighted Orlicz-Sobolev estimates and,with them, we present the proof of the Main Theorem \ref{T1} besides that we discuss about an improvement in the obtained estimates as well as we appreciate some applications of Theorem \ref{T1}. In the subsequent Section \ref{obstaculo}, we apply the  Theorem \ref{T1} in the context of the obstacle problem to obtain weighted Orlicz-Sobolev estimates of solutions to this problem. Finally, in the last Section \ref{section5} we address the limiting case $p = \infty$, that is, we provide weighted Orlicz-BMO a priori estimates for $D^{2}u$ in terms of the weighted Orlicz-BMO norm of the source term $f$ for the problem \eqref{1.1} without the dependence of terms of order 1 and 0, namely $Du$ and $u$.

\section{Preliminaries} \label{section2}

For the readers convenience, we collect some preliminaries related to fully nonlinear elliptic equations and viscosity solutions. In addition, we present here some results which will be used in the next section. 

We introduce the appropriate notions of viscosity solutions to 
\begin{equation}\label{2.1}
	\left\{
	\begin{array}{rclcl}
		F(D^2u,Du,u,x) &=& f(x)& \mbox{in} &   \Omega \\
		\mathfrak{B}(Du,u,x)&=& g(x) &\mbox{on}& \Gamma,
	\end{array}
	\right.
\end{equation}
where $\Gamma \subset \partial \Omega$ is a relatively open set. The viscosity solution concept emerged in the 1980s by Crandall and Lions in \cite{CL}. The following concept is a great tool to deal with problems of non-variational nature.
\begin{definition}[{\bf $C^{0}-$viscosity solutions}]\label{VSC_0} Let $F$ be a $\left(\lambda, \Lambda, \sigma, \xi\right)$-elliptic operator and let $f\in C^{0}(\Omega)$. A function $u \in C^{0}(\Omega \cup \Gamma)$  is said a $C^{0}-$viscosity solution of \eqref{2.1} if the following conditions hold:
\begin{enumerate}
\item[a)] for all $ \forall\,\, \varphi \in C^{2} (\Omega \cup \Gamma)$ touching $u$ by above  at  $x_0 \in \Omega \cup \Gamma$,
$$
\left\{
\begin{array}{rcl}
F\left(D^2 \varphi(x_{0}), D \varphi(x_{0}), \varphi(x_{0}), x_{0}\right)  \geq f(x_0) & \text{when} & x_0 \in \Omega \\
\mathfrak{B}(D\varphi(x_{0}),\varphi(x_{0}),x_{0}) \ge g(x_0) & \text{when} & x_0 \in \Gamma;
\end{array}
\right.
$$
\item[b)] for all $ \forall\,\, \varphi \in C^{2 } (\Omega \cup \Gamma)$ touching $u$ by below  at  $x_0 \in \Omega \cup \Gamma$,
$$
\left\{
\begin{array}{rcl}
F\left(D^2 \varphi(x_{0}), D \varphi(x_{0}), \varphi(x_{0}), x_{0}\right)  \leq f(x_0) & \text{when} & x_0 \in \Omega \\
\mathfrak{B}(D\varphi(x_{0}),\varphi(x_{0}),x_{0}) \le g(x_0) & \text{when} & x_0 \in \Gamma.
\end{array}
\right.
$$

\end{enumerate}
\end{definition}
\begin{definition}[{\bf$L^{p}$-viscosity solution}]\label{VSLp}
Let $F$ be a $(\lambda,\Lambda,\sigma, \xi)$-elliptic operator, $p>\frac{n}{2}$ and $f\in L^{p}(\Omega)$. Assume that $F$ is continuous in $\mathrm{X}$, $\zeta$, $r$ and measurable in $x$. A function $u\in C^{0}(\overline{\Omega})$ is said an $L^{p}$-viscosity solution for $\eqref{2.1}$ if the following assertions hold:
\begin{enumerate}
\item [a)] For all $\varphi\in W^{2,p}(\overline{\Omega})$ touching $u$ by above  at  $x_0 \in \overline{\Omega}$
$$
\left\{
\begin{array}{rcl}
F\left(D^2 \varphi(x_{0}), D \varphi(x_{0}), \varphi(x_{0}), x_{0}\right)  \geq f(x_0) & \text{when} & x_0 \in \Omega \\
\mathfrak{B}(D\varphi(x_{0}),\varphi(x_{0}),x_{0}) \ge g(x_0) & \text{when} & x_0 \in \partial \Omega;
\end{array}
\right.
$$
\item [b)] For all $\varphi\in W^{2,p}(\overline{\Omega})$ touching $u$ by below  at  $x_0 \in \overline{\Omega}$
$$
\left\{
\begin{array}{rcl}
F\left(D^2 \varphi(x_{0}), D \varphi(x_{0}), \varphi(x_{0}), x_{0}\right)  \leq f(x_0) & \text{when} & x_0 \in \Omega \\
\mathfrak{B}(D\varphi(x_{0}),\varphi(x_{0}),x_{0})\le g(x_0) & \text{when} & x_0 \in \partial \Omega.
\end{array}
\right.
$$
\end{enumerate}
\end{definition}

For convenience of notation, we define
$$
L^{\pm}(u) \defeq \mathcal{M}^{\pm}_{\lambda,\Lambda}(D^2 u) \pm \sigma |Du|.
$$

\begin{definition}
 We define the class $\overline{\mathcal{S}}\left(\lambda,\Lambda,\sigma, f\right)$ and $\underline{S}\left(\lambda,\Lambda,\sigma, f\right)$ to be the set of all continuous functions $u$ that satisfy $L^{+}(u) \ge f$, respectively $L^{-}(u) \le f$ in the viscosity sense (see Definition \ref{VSC_0}). We define
 $$
 \mathcal{S}\left(\lambda, \Lambda, \sigma,f\right) \defeq  \overline{\mathcal{S}}\left(\lambda, \Lambda, \sigma,f\right) \cap \underline{\mathcal{S}}\left(\lambda, \Lambda,\sigma, f\right)\,\,\text{and}\,\,
 \mathcal{S}^{\star}\left(\lambda, \Lambda,\sigma, f\right) \defeq  \overline{\mathcal{S}}\left(\lambda, \Lambda, \sigma,|f|\right) \cap \underline{\mathcal{S}}\left(\lambda, \Lambda,\sigma, -|f|\right).
  $$
  Moreover, when $\sigma=0$, we denote $\mathcal{S}^{\star}(\lambda,\Lambda,0,f)$ just by $S^{\star}(\lambda,\Lambda,f)$ (resp. by $\underline{S}, \overline{S}, \mathcal{S}$).
 \end{definition}

\begin{definition}
A paraboloid is a polynomial in $(x_1,\ldots,x_n)$ of degree $2$. We call $P$ a paraboloid of opening $M$ if
$$
	P_{M}(x)=a+b\cdot x\pm\frac{M}{2}|x|^{2}.
$$ When the sign of $P_{M}$ is "+" see that $P_{M}$ is a convex function and  a concave function otherwise.  
\end{definition}
Let $u$ be a continuous function in $\Omega$. We define, for any an open set $\widetilde{\Omega}\subset \Omega$ and $M>0$,
$$
\underline{G}_{\mathrm{M}}(u,\widetilde{\Omega}) \defeq \left\{x_0 \in \widetilde{\Omega} ; \exists \, \mathrm{P}_{\mathrm{M}} \,\,\, \textrm{concave paraboloid s. t.} \,\,\, \mathrm{P}_{\mathrm{M}}(x_0)=u(x_0), \,\, \mathrm{P}_{\mathrm{M}}(x) \le u(x)\,\, \forall \, x \in \widetilde{\Omega}\right\}
$$
and
$$
\underline{A}_{\mathrm{M}}(u,\widetilde{\Omega}) \defeq \widetilde{\Omega} \setminus \underline{G}_{\mathrm{M}}(u,\widetilde{\Omega}).
$$

Analogously we define , using concave paraboloids, $\overline{G}_{\mathrm{M}}(u,\widetilde{\Omega})$ and  $\overline{A}_{\mathrm{M}}(u,\widetilde{\Omega})$. Also define the sets
$$
G_{\mathrm{M}}(u,\widetilde{\Omega}) \defeq  \underline{G}_{\mathrm{M}}(u,\widetilde{\Omega}) \cap \overline{G}_{\mathrm{M}}(u,\widetilde{\Omega})\,\,\,\text{and}\,\,\,A_{\mathrm{M}}(u,\widetilde{\Omega}) \defeq \underline{A}_{\mathrm{M}}(u,\widetilde{\Omega}) \cap \overline{A}_{\mathrm{M}}(u,\widetilde{\Omega}).
$$
Furthermore, we define:
\begin{eqnarray*}
\overline{\Theta}(u,\widetilde{\Omega})(x) &\defeq& \inf\left\{\mathrm{M}>0 ; \, x \in \overline{G}_{\mathrm{M}}(u,\widetilde{\Omega})\right\},\\
\underline{\Theta}(u,\widetilde{\Omega})(x)&\defeq& \inf\left\{\mathrm{M}>0 ; \, x \in \underline{G}_{\mathrm{M}}(u,\widetilde{\Omega})\right\},\\
\Theta(u,\widetilde{\Omega})(x) &\defeq& \sup\left\{\underline{\Theta}(u,\widetilde{\Omega})(x), \overline{\Theta}(u,\widetilde{\Omega})(x)\right\}.
\end{eqnarray*}
For more details about such classes of solutions as well as the functions above that relate the tangent paraboloids to a continuous function we recommend \cite{CC} and \cite{CCKS}.

Now, we present the following Stability result (see \cite[Theorem 3.8]{CCKS} for a proof).
\begin{lemma}[{\bf Stability Lemma}]\label{Est}
For $k \in \mathbb{N}$ let $\Omega_k \subset \Omega_{k+1}$ be an increasing sequence of domains and $\displaystyle \Omega \defeq \bigcup_{k=1}^{\infty} \Omega_k$. Let $p > n$ and $F, F_k$ be $(\lambda, \Lambda, \sigma, \xi)-$elliptic operators. Assume $f \in L^{p}(\Omega)$, $f_k \in L^p(\Omega_k)$ and that $u_k \in C^0(\Omega_k)$ are $L^{p}-$viscosity sub-solutions (resp. super-solutions) of
$$
	F_k(D^2 u_k,Du_k,u_k,x)=f_k(x) \quad \textrm{in} \quad \Omega_k.
$$
Suppose that $u_k \to u_{\infty}$ locally uniformly in $\Omega$ and that for $\mathrm{B}_r(x_0) \subset \Omega$ and $\varphi \in W^{2,p}(\mathrm{B}_r(x_0))$ we have
\begin{equation} \label{Est1}
	\|(\hat{g}-\hat{g}_k)^+\|_{L^p(\mathrm{B}_r(x_0))} \to 0 \quad \left(\textrm{resp.} \,\,\, \|(\hat{g}-\hat{g}_k)^-\|_{L^p(\mathrm{B}_r(x_0))} \to 0 \right),
\end{equation}
where $\hat{g}(x) \defeq F(D^2 \varphi, D \varphi, u,x)-f(x)$ and $\hat{g}_k(x) =  F_k(D^2 \varphi, D \varphi, u_{k},x)-f_k(x)$.  Then $u$ is an $L^{p}-$viscosity sub-solution (resp. super-solution) of
$$
	F(D^2 u,Du,u,x)=f(x) \quad \textrm{in} \quad \Omega.
$$
Moreover, if $F$ and $f$ are continuous, then $u$ is a $C^0-$viscosity sub-solution (resp. super-solution) provided that \eqref{Est1} holds for all $\varphi \in C^2(\mathrm{B}_r(x_0))$ test function.
\end{lemma}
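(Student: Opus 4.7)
The plan is to verify the $L^p$-viscosity subsolution property of $u_\infty$ by passing to the limit in the inequalities satisfied by $u_k$; the supersolution case is symmetric and uses the other half of hypothesis \eqref{Est1}. Fix a ball $\mathrm{B}_r(x_0)\subset \Omega$ and a test function $\varphi\in W^{2,p}(\mathrm{B}_r(x_0))$ such that $u_\infty-\varphi$ attains a local maximum at an interior point $\hat{x}$. Replacing $\varphi$ by $\varphi+\varepsilon|x-\hat{x}|^2$, I may assume without loss that $\hat{x}$ is a strict local maximum, and let $\varepsilon\to 0^+$ at the end. Since $u_k\to u_\infty$ uniformly on compact sets and the maximum is strict, a standard argument produces points $\hat x_k\to \hat x$ at which $u_k-\varphi$ attains a local maximum in $\mathrm{B}_r(x_0)\subset \Omega_k$ (valid for $k$ large, by the exhaustion assumption), and Definition \ref{VSLp} applied to $u_k$ yields
\begin{equation*}
\essliminf_{x\to \hat x_k}\,\hat g_k(x)\;\ge\;0.
\end{equation*}

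The crucial step is to transfer these essential lim-inf inequalities at varying base points to the corresponding one for $\hat g$ at $\hat x$. Hypothesis \eqref{Est1} provides the one-sided $L^p$-control $\|(\hat g-\hat g_k)^+\|_{L^p(\mathrm{B}_r(x_0))}\to 0$, which is exactly what is needed since only the positive part of $\hat g-\hat g_k$ can spoil the lower bound. I would invoke the following measure-theoretic lemma (essentially contained in \cite{CCKS}): if $h_k\to h$ in $L^p$, $\hat x_k\to \hat x$, and $\essliminf_{x\to \hat x_k} h_k(x)\ge 0$ for every $k$, then $\essliminf_{x\to \hat x} h(x)\ge 0$. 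The proof combines Lebesgue differentiation at $\hat x$ with the characterization of the essential lim-inf as the supremum over $\delta>0$ of the essential infimum of $h$ on $\mathrm{B}_\delta(\hat x)$, using the $L^p$-convergence to compare the respective essential infima across the sequence. Applying this lemma to $h_k=\hat g_k$ and $h=\hat g$, and then sending $\varepsilon\to 0^+$, establishes the $L^p$-viscosity subsolution property of $u_\infty$.

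For the $C^0$-viscosity version with $F$ and $f$ continuous and $\varphi\in C^2$, the function $\hat g$ is continuous near $\hat x$, so the essential lim-inf collapses to the pointwise value $\hat g(\hat x)$; the argument reduces to passing to the limit in $\hat g_k(\hat x_k)\ge 0$ using continuity of $F$ in all arguments, the convergence $u_k(\hat x_k)\to u_\infty(\hat x)$, and the $L^p$-smallness of $(\hat g-\hat g_k)^+$ on a neighborhood of $\hat x$. The main obstacle, and the reason the proof is deferred to \cite[Theorem 3.8]{CCKS}, is precisely the measure-theoretic passage in the $L^p$ case: one cannot simply evaluate $\hat g$ at $\hat x_k$ because $\hat g$ is only defined almost everywhere, so reconciling essential lim-infs of a moving sequence with those of the $L^p$-limit at a fixed point requires a careful averaged-control argument rather than any naive pointwise convergence, together with the observation that even the touching condition for $W^{2,p}$ test functions must be interpreted in the essential-sup/inf sense, which is precisely what makes the essential-lim-inf formalism natural throughout.
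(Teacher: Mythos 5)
The paper offers no proof of this lemma at all --- it is quoted from \cite[Theorem 3.8]{CCKS} and the reader is sent there --- so the only meaningful comparison is with that argument, and your sketch neither reproduces it nor closes on its own terms. The step on which everything rests, the ``measure-theoretic lemma'' ($h_k\to h$ in $L^p$, $\hat x_k\to\hat x$, $\essliminf_{x\to\hat x_k}h_k\ge 0$ for all $k$ imply $\essliminf_{x\to\hat x}h\ge 0$), is false. Take $h_k\equiv h$, where $h=-1$ off the union of the balls $\mathrm{B}_{2^{-k^2}}(\hat x_k)$ and $h=0$ on them: then $\essliminf_{x\to\hat x_k}h_k=0$ for every $k$ and the $L^p$ hypothesis is trivial, yet $\essliminf_{x\to\hat x}h=-1$. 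The same construction (with $h\equiv-1$ and $h_k$ modified only on $\mathrm{B}_{2^{-k^2}}(\hat x_k)$) kills the $\esslimsup$ version as well. The failure is structural: the essential limit at $\hat x_k$ constrains $h_k$ only on arbitrarily small neighbourhoods of $\hat x_k$, whose total measure may vanish as $k\to\infty$, so no $L^p$ bound --- one-sided or two-sided --- can propagate that information to a fixed neighbourhood of $\hat x$. Two further slips compound this. First, a touching point for an $L^p$-viscosity subsolution yields $\esslimsup_{x\to\hat x_k}\hat g_k\ge 0$, not $\essliminf\ge 0$; the latter is strictly stronger and does not follow from the definition. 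Second, since $\hat g\ge\hat g_k-(\hat g-\hat g_k)^-$, it is the \emph{negative} part of $\hat g-\hat g_k$ that can destroy a lower bound on $\hat g$ inherited from $\hat g_k$, so your claim that the positive part ``is exactly what is needed'' is backwards. (In fact, with the convention $\hat g=F(\cdot)-f$ and $F$ nondecreasing in the Hessian, the example $F=\mathrm{tr}$, $f\equiv 0$, $f_k\equiv-1$, $u_k=u=-|x|^2/(2n)$ satisfies every hypothesis of the lemma as transcribed but $u$ is not a subsolution of $\mathrm{tr}(D^2u)=0$; the $\pm$ labels in \eqref{Est1} are interchanged relative to CCKS's sign convention, a translation slip worth flagging to the author.)

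The actual proof in \cite{CCKS} avoids any transfer of essential limits across moving base points. One argues by contradiction: if $u_\infty$ is not a subsolution, there are $\varphi\in W^{2,p}$, $\varepsilon>0$ and a ball $\mathrm{B}_\rho(\hat x)$ on which $\hat g\le-\varepsilon$ a.e.\ while $u_\infty-\varphi$ attains a (after the standard quadratic perturbation, strict) local maximum at $\hat x$. The test function is then \emph{corrected}: let $\gamma_k\ge 0$ be the relevant signed part of $\hat g-\hat g_k$ and let $\psi_k$ solve the extremal Dirichlet problem $\mathcal{M}^{+}_{\lambda,\Lambda}(D^2\psi_k)+\sigma|D\psi_k|=-\gamma_k$ in $\mathrm{B}_\rho(\hat x)$ with $\psi_k=0$ on the boundary. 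The ABP estimate gives $\|\psi_k\|_{L^\infty}\le C\|\gamma_k\|_{L^p}\to 0$, so for large $k$ the function $u_k-(\varphi+\psi_k)$ still has an interior local maximum near $\hat x$, while the structure condition (H1) yields $F_k(x,u_k,D(\varphi+\psi_k),D^2(\varphi+\psi_k))-f_k\le\hat g_k-\gamma_k\le\hat g\le-\varepsilon$ a.e.\ there, contradicting the definition of $L^p$-viscosity subsolution for $u_k$. This ABP-corrected test function is the idea missing from your proposal, and without it the argument cannot be repaired.
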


The next result is an ABP Maximum Principle (see \cite[Theorem 2.1]{LiZhang} for a proof).
\begin{lemma}[{\bf ABP Maximum Principle}]\label{ABP-fullversion}
Let $\Omega\subset \mathrm{B}_{1}$ and $u$ satisfying
\begin{equation*}
\left\{
\begin{array}{rclcl}
u\in \mathcal{S}(\lambda,\Lambda,f) &\mbox{in}&   \Omega \\
\mathfrak{B}(Du,u,x)=g(x)  &\mbox{on}&  \Gamma.
\end{array}
\right.
\end{equation*}
Suppose that exists $\varsigma\in \partial \mathrm{B}_{1}$ such that $\beta\cdot\varsigma\geq \mu_0$ and $\gamma\le 0$ in $\Gamma$. Then,
	\begin{eqnarray*}
		\|u\|_{L^{\infty}(\Omega)}\leq \|u\|_{L^{\infty}(\partial \Omega\setminus \Gamma)}+\mathrm{C}(n, \lambda, \Lambda, \mu_0)(\| f\|_{L^{n}(\Omega)}+\| g\|_{L^{\infty}(\Gamma)})
	\end{eqnarray*}
\end{lemma}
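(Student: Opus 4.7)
The plan is to combine a barrier argument that absorbs the oblique boundary datum into the right-hand side with the classical convex-envelope ABP of Caffarelli--Cabr\'e, so that the only non-standard part of the proof is concentrated on the oblique portion $\Gamma$ of the boundary. Using the direction $\varsigma$ furnished by the hypothesis (which satisfies $\beta \cdot \varsigma \geq \mu_0$), I would first build the exponential barrier
\[
\phi(x) \defeq e^{\alpha} - e^{\alpha\, x \cdot \varsigma}, \qquad x \in \overline{\mathrm{B}_1},
\]
for a large universal parameter $\alpha = \alpha(\mu_0,\lambda,\Lambda)$ to be fixed later. A direct computation shows that $\phi \geq 0$ on $\overline{\mathrm{B}_1}$, that $D^{2}\phi$ is rank one and negative semi-definite with eigenvalue $-\alpha^{2} e^{\alpha x\cdot\varsigma}$ (so $\mathcal{M}^{\pm}_{\lambda,\Lambda}(D^2\phi)$ are uniformly bounded), and that on $\Gamma$, using $\gamma \leq 0$ and $\phi \geq 0$,
\[
\beta \cdot D\phi + \gamma\,\phi \;\leq\; -\alpha \mu_0 e^{-\alpha} \;<\; 0.
\]
Denote the strictly positive universal constant $c_0 \defeq \alpha \mu_0 e^{-\alpha}$.

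Setting $M \defeq c_0^{-1}\|g\|_{L^\infty(\Gamma)}$ and $v \defeq u + M\phi$, the Pucci sub-additivity $\mathcal{M}^{-}(X+Y) \leq \mathcal{M}^{-}(X) + \mathcal{M}^{+}(Y)$, together with $u \in \mathcal{S}(\lambda,\Lambda,f)$ and $\mathcal{M}^{+}(D^2\phi) \leq 0$, yields $\mathcal{M}^{-}(D^2 v) \leq f$ in the viscosity sense; symmetrically $\mathcal{M}^{+}(D^2 v) \geq f - C_0 M$ for a universal $C_0 = C_0(\lambda,\Lambda,\alpha)$. Hence $v \in \mathcal{S}(\lambda,\Lambda,\tilde f)$ with $\|\tilde f\|_{L^n(\Omega)} \leq \|f\|_{L^n(\Omega)} + \mathrm{C}\|g\|_{L^\infty(\Gamma)}$, and on $\Gamma$ the oblique condition improves to the \emph{one-sided} Neumann-type inequality $\beta\cdot Dv + \gamma v \leq 0$. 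In this way the full effect of the oblique datum $g$ has been pushed into a controlled interior source, at the price of an enlarged $L^n$ norm.

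It then suffices to prove the mixed-boundary ABP estimate
\[
\sup_{\Omega} v^+ \;\leq\; \sup_{\partial\Omega\setminus\Gamma} v^+ \;+\; \mathrm{C}(n,\lambda,\Lambda,\mu_0)\,\|\tilde f\,\|_{L^n(\Omega)},
\]
after which substituting $v = u + M\phi$ (with $\|\phi\|_{L^\infty}$ universal) delivers the bound on $\sup_\Omega u^+$; the dual argument applied to $-u$ with a mirrored barrier produces the matching bound on $\sup_\Omega u^-$ and closes the proof. To establish this mixed ABP one mimics the Caffarelli--Cabr\'e convex-envelope argument: extending $v^+$ by its Dirichlet value on $\partial\Omega\setminus\Gamma$, one analyzes the concave envelope and its contact set, and on the interior contact portion the Pucci inequality $\mathcal{M}^{-}(D^2 v) \leq \tilde f$ yields the standard determinantal control of the gradient map, providing the $L^n$ bound on the image measure. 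The \emph{main obstacle}, and the only place where obliqueness truly enters, is the treatment of contact points lying on $\Gamma$: there, the condition $\beta\cdot\varsigma \geq \mu_0$ together with the reduced inequality $\beta\cdot Dv + \gamma v \leq 0$ must be used to show that the image of the gradient map through $\Gamma$ is Lebesgue-negligible, i.e.\ that no mass escapes through the oblique part of the boundary. This is precisely the technical core of \cite[Theorem~2.1]{LiZhang}, and it is where the hypothesis $\beta \cdot \varsigma \geq \mu_0$ is indispensable; everything else is a clean barrier-plus-ABP reduction.
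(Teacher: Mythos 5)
The paper gives no argument for this lemma; it is quoted from \cite[Theorem 2.1]{LiZhang}. Measured against that, your strategy --- an exponential barrier in the fixed direction $\varsigma$ to absorb the oblique datum $g$, followed by a Caffarelli--Cabr\'e envelope argument in which the only new difficulty is the contact set on $\Gamma$ --- is the standard one, and your computations for $\phi$ (namely $\phi\ge 0$, $D^{2}\phi\preceq 0$ of rank one, and $\beta\cdot D\phi+\gamma\phi\le-\alpha\mu_0 e^{-\alpha}$ on $\Gamma$ using $\gamma\le 0$) are correct.

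There are, however, two problems. The first is a direction mismatch in the reduction. With $v=u+M\phi$, the inequality $\beta\cdot Dv+\gamma v\le 0$ you extract is the \emph{supersolution} half of the boundary condition: it is available only to test functions touching $v$ from \emph{below}. The concave-envelope argument for $\sup_{\Omega}v^{+}$ works with supporting affine functions touching $v$ from \emph{above}, and those only see the subsolution half, which for $v=u+M\phi$ reads $\beta\cdot D\varphi+\gamma\varphi\ge g+M(\beta\cdot D\phi+\gamma\phi)$ --- a lower bound by a quantity that is $\le 0$, hence useless at $\Gamma$-contact points. For the bound on $\sup v^{+}$ you should \emph{subtract} the barrier: with $v=u-M\phi$ one has $D^{2}v\succeq D^{2}u$, so $\mathcal{M}^{+}_{\lambda,\Lambda}(D^{2}v)\ge f$ is preserved, $\sup_\Omega u^{+}\le\sup_\Omega v^{+}+Me^{\alpha}$, and test functions touching $v$ from above satisfy $\beta\cdot D\varphi+\gamma\varphi\ge g+Mc_0\ge 0$, which is the sign constraint on the supporting slopes that the contact-set analysis actually exploits; your choice $v=u+M\phi$ with the $\le 0$ inequality is the correct setup for the companion bound on $\sup_\Omega v^{-}$, not on $\sup_\Omega v^{+}$. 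The second problem is that the step you yourself identify as the technical core --- showing that the superdifferential image of the $\Gamma$-contact set is Lebesgue negligible, equivalently that the relevant half-ball of slopes is already realized at interior contact points --- is the entire content of the lemma beyond the classical ABP, and you delegate it wholesale to \cite{LiZhang}. Since the paper does exactly the same, this is acceptable as a reduction and correctly locates where $\beta\cdot\varsigma\ge\mu_0$ is used, but as a self-contained proof the proposal is incomplete precisely at the one point that distinguishes this statement from the Dirichlet case.
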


In the sequel, we comment on the existence and uniqueness of viscosity solutions with oblique boundary conditions. For that purpose, we will assume the following condition on $F$:

\begin{enumerate}
\item [(H)](\textbf{Modulus of continuity under coefficients}) There exists a modulus of continuity $\tilde{\omega}$, i.e., $\tilde{\omega}$ is nondecreasing with $ \displaystyle \lim_{t \to 0} \tilde{\omega}(t) =0$ and
$$
\psi_{F}(x,y) \le \tilde{\omega}(|x-y|).
$$
\end{enumerate}

 With this hypothesis we can assure existence and uniqueness of viscosity solutions to following problem:
$$
\left\{
\begin{array}{rclcl}
F(D^2u, x) &=& f(x) & \mbox{in} & \Omega,\\
\mathfrak{B}(Du,u,x)&=& g(x) & \mbox{on} & \Gamma\\
u(x)&=&\varphi(x) & \mbox{in} & \partial \Omega\setminus \Gamma,
\end{array}
\right.
$$
where $\Gamma$ is relatively open of $\partial\Omega$. In \cite{Bessa} can be seen a proof for the next theorem. For this reason, we will omit it here.

\begin{theorem}[{\bf Existence and Uniqueness}]\label{Existencia}
Let $\Gamma\in C^{2}$, $\beta\in C^{2}(\overline{\Gamma})$, $\gamma\le 0$ and $\varphi\in C^{0}(\partial \Omega\setminus \Gamma)$. Suppose that there exists $\varsigma\in\partial \mathrm{B}_{1}$ such that $\beta\cdot \varsigma\ge \mu_0$ on $\Gamma$ and assume $(H)$. In addition, suppose that $\Omega$ satisfies an exterior cone condition at any $x\in \partial \Omega\setminus \overline{\Gamma}$ and satisfies an exterior sphere condition at any $x\in \overline{\Gamma}\cap (\partial \Omega\setminus \Gamma)$. Then, there exists a unique viscosity solution $u\in C^{0}(\overline{\Omega})$ of
	$$
	\left\{
	\begin{array}{rclcl}
	F(D^2u, x) &=& f(x) & \mbox{in} & \Omega,\\
	\mathfrak{B}(Du,u,x)&=& g(x) & \mbox{on} & \Gamma\\
	u(x)&=&\varphi(x) & \mbox{in} & \partial \Omega\setminus \Gamma.
	\end{array}
	\right.
	$$
\end{theorem}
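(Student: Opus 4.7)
The approach I would take combines two classical pillars of viscosity solution theory: a comparison principle that yields uniqueness, and Perron's method that produces the solution as the supremum of a family of admissible subsolutions, with continuity of boundary values enforced by explicit barriers. Since the problem has a mixed oblique/Dirichlet boundary, the construction has to be carried out carefully near three distinct regions of $\partial\Omega$: the open oblique portion $\Gamma$, the closed Dirichlet portion $\partial\Omega\setminus\overline{\Gamma}$, and the transition set $\overline{\Gamma}\cap(\partial\Omega\setminus\Gamma)$.

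For uniqueness, the plan is to prove a comparison principle: if $u$ is an $L^p$-viscosity subsolution and $v$ a supersolution with $u\le v$ on $\partial\Omega\setminus\Gamma$, then $u\le v$ in $\overline{\Omega}$. I would use Ishii's doubling-of-variables method applied to $\Psi(x,y)=u(x)-v(y)-\tfrac{\alpha}{2}|x-y|^2$, combined with a standard boundary correction to handle contact points on $\Gamma$: one introduces a perturbation $\varepsilon\,\chi(x)$ with $D\chi\cdot\beta\ge\mu_0/2$ (which exists because of uniform obliqueness and $\beta\in C^2$), forcing the maximum of the perturbed functional to occur at an interior point or on the Dirichlet portion. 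The modulus-of-continuity assumption (H) handles the $x$-dependence of $F$, and $\gamma\le 0$ guarantees properness in the boundary operator $\mathfrak{B}$.

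For existence, I would run Perron's method in the oblique setting. Define
\begin{eqnarray*}
\mathcal{F} &=& \{\, w \in USC(\overline{\Omega}) : w \text{ is an $L^p$-viscosity subsolution of } F(D^2w,x)=f \text{ in } \Omega,\\
& & \phantom{\{\,} \mathfrak{B}(Dw,w,x)\le g \text{ on } \Gamma, \text{ and } w\le \varphi \text{ on } \partial\Omega\setminus\Gamma\,\},
\end{eqnarray*}
and set $u(x)=\sup\{w(x):w\in\mathcal{F}\}$. Nonemptiness of $\mathcal{F}$ and boundedness of $u$ follow from the ABP estimate of Lemma \ref{ABP-fullversion} applied to any large affine/quadratic sub- and supersolutions. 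The standard bumping argument shows that $u^*$ is a subsolution and $u_*$ a supersolution; then the comparison principle gives $u^*\le u_*$, hence $u\in C^0(\overline{\Omega})$ and $u$ solves the equation in the viscosity sense.

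The main obstacle, and where most of the work lies, is the construction of barriers guaranteeing $u=\varphi$ on $\partial\Omega\setminus\Gamma$ and the oblique condition on $\Gamma$ at the pointwise level. At points of $\partial\Omega\setminus\overline{\Gamma}$, the exterior cone condition produces the classical power-type barriers $w(x)=|x-x_0|^{\tau}$ with $\tau$ small, which are supersolutions for $\mathcal{M}^-_{\lambda,\Lambda}$; at points of $\overline{\Gamma}\cap(\partial\Omega\setminus\Gamma)$, the exterior sphere condition allows the sharper radial barriers of the form $w(x)=A(1-e^{-\kappa\,d(x)})$ in an exterior ball, which is where the $C^2$ regularity of $\Gamma$ and $\beta$ is used to match boundary data; and on $\Gamma$ itself I would use barriers of Ishii--Lions type built from the flow of $\beta$, exploiting $\beta\cdot\nu\ge\mu_0$ to verify $\mathfrak{B}\le g$ in a neighborhood. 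Once these barriers are in place at each boundary point, a sandwich argument against elements of $\mathcal{F}$ and their associated supersolutions forces continuity of $u$ up to $\partial\Omega$, completing both existence and attainment of data.
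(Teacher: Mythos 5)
The paper itself does not contain a proof of Theorem \ref{Existencia}: it explicitly defers to the cited work \cite{Bessa}, where the argument is exactly the scheme you outline, namely a comparison principle for the mixed oblique/Dirichlet problem (imported from Lieberman's book and Li--Zhang \cite{LiZhang}) combined with Perron's method and barrier constructions at the three boundary regions. So in architecture your proposal coincides with the proof the paper is pointing to, and the use of $\gamma\le 0$, obliqueness $\beta\cdot\nu\ge\mu_0$, hypothesis (H), and the cone/sphere conditions is placed where it should be.

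Two points in your sketch would not survive as written. First, the exterior-cone barrier: $w(x)=|x-x_0|^{\tau}$ with small $\tau>0$ is in general \emph{not} a supersolution of $\mathcal{M}^{-}_{\lambda,\Lambda}(D^2w)\le 0$. The eigenvalues of $D^2|x-x_0|^{\tau}$ are $\tau(\tau-1)|x-x_0|^{\tau-2}$ (radial) and $\tau|x-x_0|^{\tau-2}$ (tangential, with multiplicity $n-1$), so $\mathcal{M}^{-}_{\lambda,\Lambda}(D^2w)\le 0$ forces $\tau\le 1-\lambda(n-1)/\Lambda$, which is non-positive unless $\Lambda>\lambda(n-1)$; already for the Laplacian, $\Delta|x-x_0|^{\tau}=\tau(\tau+n-2)|x-x_0|^{\tau-2}>0$, i.e.\ the function is a subsolution, not a supersolution. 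The correct object is the Miller-type barrier $r^{\tau}\phi(\theta)$, homogeneous of a degree determined by the opening of the exterior cone and vanishing on it; the existence of such a $\phi$ with the right sign for $\mathcal{M}^{-}_{\lambda,\Lambda}$ is precisely what the exterior cone condition buys, and this needs to be stated rather than the radial power. Second, you formulate the Perron family and the comparison principle for $L^{p}$-viscosity sub/supersolutions, but comparison at that level of generality (measurable $f$) is substantially harder and is not what is needed here: the theorem assumes (H) and is applied later (Proposition \ref{Casolp}) with continuous data $f_j$, $g_j$, so the whole construction should be run in the $C^{0}$-viscosity framework, where the Ishii--Lions doubling argument with the boundary correction $\varepsilon\chi$, $D\chi\cdot\beta\ge\mu_0/2$, and $\gamma\le 0$ is available. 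With these two repairs your outline matches the proof given in \cite{Bessa}.
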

For a study of the existence and uniqueness of solutions to problems of an oblique nature, we recommend the classic work by Ishii and Lions \cite{Hi}.
\subsection{A few more facts about weights and weighted Orlicz spaces}

\hspace{0.4cm} We will present some more properties already known about weights and weighted Orlicz spaces that will be indispensable for the progress of this work. Initially, about weights we have the following lemma whose proof we found in Kokilashvili and Krbec's book  \cite{KokiMiro}. Next we will denote by $\Leb(A)$ the n-dimensional Lebesgue measure of the measurable set $A\subset \mathbb{R}^{n}$.

\begin{lemma}\label{propriedadesdospesos}
Let $\omega$ be an $A_{s}$ weight for some $s\in(1,\infty)$. Then,
\begin{enumerate}
\item[(a)](increasing) If $r\ge s$, then $\omega$ belong to class $A_{r}$ and $[\omega]_{r}\leq [\omega]_{s}$. 
\item[(b)] (open-end) There exists a small enough constant $\varepsilon_{0}>0$ depending  only on $n$, $s$ and $[\omega]_{s}$ such that $\omega\in A_{s-\varepsilon_{0}}$ with $s-\varepsilon_{0}>1$.
\item[(c)] (strong doubling) There exist two postive constants $k_{1}$ and $\theta\in(0,1)$ depending only on $n$, $s$ and $[\omega]_{s}$ such that 
\begin{eqnarray*}
\frac{1}{[\omega]_{s}}\left(\frac{\Leb(E)}{\Leb(\Omega)}\right)^{s}\leq \frac{\omega(E)}{\omega(\Omega)}\leq k_{1}\left(\frac{\Leb(E)}{\Leb(\Omega)}\right)^{\theta}.
\end{eqnarray*}  
for all $E\subset\Omega$ Lebesgue measurable set. 
\end{enumerate} 
\end{lemma}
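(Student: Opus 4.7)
The three assertions are classical self-improvement/doubling properties of Muckenhoupt $A_s$ weights; my plan is to prove (a) directly by Jensen's inequality, derive the left half of (c) from the defining $A_s$ condition via Hölder, and then reduce both (b) and the right half of (c) to a reverse Hölder inequality for $\omega$.

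For (a), fix $r \geq s > 1$ and write $\omega^{-1/(r-1)} = (\omega^{-1/(s-1)})^{(s-1)/(r-1)}$. Because the exponent $(s-1)/(r-1)$ lies in $(0,1]$, concavity of $t \mapsto t^{(s-1)/(r-1)}$ and Jensen's inequality give, for every ball $B$,
$$\left(\intav{B} \omega^{-1/(r-1)}\, dx\right)^{r-1} \leq \left(\intav{B} \omega^{-1/(s-1)}\, dx\right)^{s-1}.$$
Multiplying by $\intav{B} \omega$ and taking the supremum over balls yields $[\omega]_r \leq [\omega]_s$. For the lower bound in (c), I apply Hölder with exponents $s$ and $s/(s-1)$ to the factorization $1 = \omega^{1/s} \cdot \omega^{-1/s}$ on $E$:
$$\Leb(E)^s \leq \omega(E)\left(\int_E \omega^{-1/(s-1)}\, dx\right)^{s-1} \leq \omega(E)\left(\int_\Omega \omega^{-1/(s-1)}\, dx\right)^{s-1}.$$
Dividing by $\omega(\Omega)\, \Leb(\Omega)^s$ and invoking the $A_s$ bound on $\Omega$ (which I may assume is a ball, since the statement is evidently intended in that setting) produces the desired estimate with constant $[\omega]_s^{-1}$.

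The principal difficulty, and the step on which both (b) and the upper bound in (c) rely, is the reverse Hölder inequality for $A_s$ weights. I would prove this along the lines of Coifman--Fefferman: for a fixed ball $B_0$, perform a Calderón--Zygmund stopping-time decomposition of $\omega$ at the levels $\lambda_k = 2^k \intav{B_0} \omega$, and use the $A_s$ condition to compare $\omega$-measure to Lebesgue measure on each stopping family. This produces a good-$\lambda$ inequality of the shape
$$\Leb\bigl(\{x \in B_0 : \omega(x) > \alpha \lambda\}\bigr) \leq C\, \alpha^{-s/(s-1)}\, \Leb\bigl(\{x \in B_0 : \omega(x) > \lambda\}\bigr)$$
for $\alpha$ large enough. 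Integrating the layer-cake formula against $\lambda^{\delta-1}$ for $\delta > 0$ small then yields
$$\left(\intav{B_0} \omega^{1+\delta}\right)^{1/(1+\delta)} \leq C\intav{B_0} \omega.$$
From this self-improvement, (b) follows by a duality argument applied to the $\omega^{-1/(s-1)}$ factor in $[\omega]_s$: the reverse Hölder bound on $\omega^{-1/(s-1)}$ (which is itself an $A_{s'}$ weight) produces some $\varepsilon_0 = \varepsilon_0(n,s,[\omega]_s) > 0$ with $\omega \in A_{s-\varepsilon_0}$. Finally, for the upper estimate in (c) with $E \subset \Omega$ a ball, ordinary Hölder in $L^{1+\delta}(\Omega)$ combined with the reverse Hölder inequality yields
$$\omega(E) \leq \Leb(E)^{\delta/(1+\delta)}\left(\int_\Omega \omega^{1+\delta}\right)^{1/(1+\delta)} \leq k_1\, \omega(\Omega)\left(\frac{\Leb(E)}{\Leb(\Omega)}\right)^{\theta}$$
with $\theta = \delta/(1+\delta) \in (0,1)$, completing the proof. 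The Calderón--Zygmund/good-$\lambda$ step is where all the quantitative dependence on $n$, $s$, and $[\omega]_s$ is generated; the remaining reductions are essentially bookkeeping via Hölder and Jensen.
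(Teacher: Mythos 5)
The paper itself offers no proof of this lemma --- it is imported wholesale from Kokilashvili--Krbec \cite{KokiMiro} --- so the comparison is really with the classical argument, which is the route you follow. Parts (a) (Jensen for $t\mapsto t^{(s-1)/(r-1)}$), the lower bound in (c) (H\"older with exponents $s$ and $s/(s-1)$ plus the $A_s$ condition on the ambient set), and the reductions of (b) and of the upper bound in (c) to a reverse H\"older inequality are all correct and standard. Your remark that $\Omega$ must be taken to be a ball or cube for the constants to come out as stated is also well taken; the paper later applies part (c) with $\Omega=A^k\cup B^k$, which strictly requires the cube version together with monotonicity of both measures.

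The one step that fails as literally written is the good-$\lambda$ inequality $\Leb\bigl(\{x\in B_0:\omega>\alpha\lambda\}\bigr)\le C\,\alpha^{-s/(s-1)}\,\Leb\bigl(\{x\in B_0:\omega>\lambda\}\bigr)$, which is the step you yourself identify as carrying all the quantitative content. Take $\omega(x)=|x|^{a}$ with $-n<a<-n(s-1)/s$ (e.g.\ $s=2$, $a=-3n/4$, so $\omega\in A_2$): for $\lambda\ge 1$ the ratio of the two level sets in $B_1$ equals $\alpha^{n/a}$, which is not $O(\alpha^{-s/(s-1)})$ as $\alpha\to\infty$. Indeed, no power decay in $\alpha$ with an exponent determined by $s$ alone can hold before the reverse H\"older exponent is known. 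What the Calder\'on--Zygmund stopping-time construction actually yields is a relative estimate between successive generations of stopping cubes: with $\lambda_{k+1}=2^{n}\alpha\lambda_k$ one gets $\Leb(U_{k+1}\cap Q)\le\alpha^{-1}\Leb(Q)$ for each generation-$k$ cube $Q$, and the $A_s$ condition --- precisely the lower bound of (c) applied to $Q\setminus U_{k+1}$ --- upgrades this to $\omega(U_{k+1})\le(1-\epsilon)\,\omega(U_k)$ with $\epsilon=\epsilon(n,s,[\omega]_s)>0$. It is this geometric decay of the $\omega$-measures, fed into $\int_{B_0}\omega^{1+\delta}\le\sum_k\lambda_{k+1}^{\delta}\,\omega(U_k)$, that gives the reverse H\"older inequality for small $\delta$; once that is repaired, your duality argument for (b) and the H\"older step for the upper half of (c) go through as written.
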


We recall of Hardy-Littlewood maximal function. For $f\in L^{1}_{\loc}(\mathbb{R}^{n})$, the Hardy-Littlewood maximal function  de $f$ is defined by
\begin{eqnarray*}
	\mathcal{M}(f)(x)=\sup_{r>0}\intav{B_{r}(x)}|f(y)|dy.
\end{eqnarray*}
We have a version of classical Hardy-Littlewood-Wiener theorem for weighted Orlicz spaces(cf.\cite[Theorem 2.1.1]{KokiMiro} and ) that we will use later.
\begin{lemma}\label{maximalorlicz}
Let $\Phi$ be an $N$-function such that $\Phi\in \Delta_{2}\cap\nabla_{2}$ and $\omega\in \mathcal{A}_{i(\Phi)}$. Then  
\begin{eqnarray*}
\rho_{\Phi,\omega}(g)\leq \rho_{\Phi,\omega}\left(\mathcal{M}(g)\right)\leq \mathrm{C}\rho_{\Phi,\omega}(g)
\end{eqnarray*}
for all $g\in L^{\Phi}_{\omega}(\mathbb{R}^{n})$, where a constant $\mathrm{C}>0$ is idependent of $g$. 
\end{lemma}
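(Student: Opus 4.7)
The plan is to handle the two inequalities in the display separately. The left inequality $\rho_{\Phi,\omega}(g)\le \rho_{\Phi,\omega}(\mathcal{M}(g))$ is the easy one: by Lebesgue's differentiation theorem, $|g(x)|\le \mathcal{M}(g)(x)$ for almost every $x\in \mathbb{R}^{n}$, and since $\Phi$ is nondecreasing, applying $\Phi$ pointwise and integrating against $\omega(x)\,dx$ gives the bound at once. No use of $\omega\in A_{i(\Phi)}$ or of $\Phi\in \Delta_{2}\cap\nabla_{2}$ is needed here.

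For the right-hand inequality, the strategy is to reduce the Orlicz-modular estimate to a single weighted $L^{q}$-estimate and then re-upgrade it. First, since $\omega\in A_{i(\Phi)}$, Lemma \ref{propriedadesdospesos}(b) (open-end) provides an $\varepsilon_{0}>0$ with $q\defeq i(\Phi)-\varepsilon_{0}>1$ and $\omega\in A_{q}$. For this exponent, the classical Muckenhoupt weighted Hardy--Littlewood--Wiener theorem gives the weak-type bound
$$
\omega(\{x\in \mathbb{R}^{n} : \mathcal{M}(g)(x)>t\}) \;\le\; \frac{C_{0}}{t^{q}}\int_{\{|g|>t/2\}}|g(x)|^{q}\omega(x)\,dx,
$$
after the standard Calder\'on--Zygmund truncation $g=g\chi_{\{|g|\le t/2\}}+g\chi_{\{|g|>t/2\}}$, which forces $\{\mathcal{M}(g)>t\}\subset \{\mathcal{M}(g\chi_{\{|g|>t/2\}})>t/2\}$.

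Next I would plug this into the layer-cake formula
$$
\rho_{\Phi,\omega}(\mathcal{M}(g)) \;=\; \int_{0}^{\infty}\omega(\{x:\mathcal{M}(g)(x)>t\})\,d\Phi(t),
$$
and apply Fubini to interchange the $t$-integral with the spatial one. This reduces the problem to controlling an integrand of the form
$$
|g(x)|^{q}\,\omega(x)\,\int_{0}^{2|g(x)|}t^{-q}\,d\Phi(t)
$$
by a constant multiple of $\Phi(|g(x)|)\,\omega(x)$. In other words, the whole argument comes down to the pointwise bound $\int_{0}^{2s} t^{-q}\,d\Phi(t)\le C\,\Phi(s)/s^{q}$. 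This is precisely where the condition $q<i(\Phi)$ enters: by the definition of the lower index together with the two-sided estimate \eqref{propriedadedei(Phi)}, one has $\Phi(st)\ge c\,s^{q}\Phi(t)$ for small $s$, which, combined with $\Delta_{2}$, gives the desired comparison after integration by parts.

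I expect the last step to be the main technical obstacle. The two earlier ingredients (the open-end property of Muckenhoupt classes and the weighted weak-$(q,q)$ maximal inequality) are entirely off-the-shelf, but the passage from a single $L^{q}(\omega)$ inequality to the Orlicz modular has to genuinely exploit \emph{both} indices: $i(\Phi)>q$ for the behavior near $t=0$ and $\Delta_{2}$ for the behavior near $t=\infty$. Once the inner integral is tamed, gluing everything yields the desired constant $C=C(n,q,[\omega]_{q},\Delta_{2}(\Phi),\nabla_{2}(\Phi))$, independent of $g$, and the lemma follows. (Alternatively one could invoke Rubio de Francia extrapolation directly, but the layer-cake route above is more self-contained and fits the level of detail of Kokilashvili--Krbec \cite{KokiMiro}, to which the paper refers.)
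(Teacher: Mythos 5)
The paper does not actually prove this lemma: it is quoted verbatim from Kokilashvili--Krbec \cite[Theorem 2.1.1]{KokiMiro}, so there is no in-text argument to compare against. Your proposal reconstructs what is essentially the standard proof of that cited result -- truncation $g=g\chi_{\{|g|\le t/2\}}+g\chi_{\{|g|>t/2\}}$, the weighted weak-$(q,q)$ maximal inequality for some $q<i(\Phi)$ obtained from the open-end property of $A_{i(\Phi)}$, the layer-cake representation of the modular, and Fubini -- and the overall architecture is sound and self-contained. The left inequality via Lebesgue differentiation and monotonicity of $\Phi$ is also fine.

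There is, however, one genuine slip in the decisive step. You claim that the reduction hinges on ``$\Phi(st)\ge c\,s^{q}\Phi(t)$ for small $s$,'' but this is the wrong direction, and it is not what \eqref{propriedadedei(Phi)} yields either (for $s\le 1$ that display gives $\Phi(st)\ge c^{-1}s^{p_{2}}\Phi(t)$, which does not dominate $s^{q}\Phi(t)$ when $p_{2}>q$). What the inner integral actually requires is the \emph{upper} bound: fix $p_{1}$ with $q<p_{1}<i(\Phi)$; by the definition of the lower index (via submultiplicativity of $h_{\Phi}$) one has $\Phi(st)\le C s^{p_{1}}\Phi(t)$ for all $0<s\le 1$, i.e.\ $t\mapsto \Phi(t)/t^{p_{1}}$ is almost increasing. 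This is precisely what makes
$$
\int_{0}^{2s}t^{-q}\,d\Phi(t)=\Bigl[t^{-q}\Phi(t)\Bigr]_{0}^{2s}+q\int_{0}^{2s}t^{-q-1}\Phi(t)\,dt
\le \Bigl(1+\tfrac{q}{p_{1}-q}\Bigr)C\,(2s)^{-q}\Phi(2s)\le C'\,s^{-q}\Phi(s)
$$
converge at $t=0$ and close the estimate, the last step using $\Delta_{2}$. With that correction the argument goes through and gives the constant with the stated dependencies.
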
 

Later we will need a sufficient condition for the Hessian in the distributional sense to be in weighted Orlicz spaces. This is guaranteed from the following lemma (for proof cf. \cite[Lemma 3.4]{BLOK}).

\begin{lemma}\label{caracterizationofhessian}
Let $\Phi$ be an $N$-function satisfying $\Delta_{2}\cap \nabla_{2}$-condition and $\omega\in \mathcal{A}_{i(\Phi)}$ weight. Assume that $u\in C^{0}(E)$ for bounded domain $E\subset \mathbb{R}^{n}$ and set for $r>0$
\begin{eqnarray*}
\Theta(u,r)(x)\defeq  \Theta(u,B_{r}(x)\cap E)(x), \ x\in E.
\end{eqnarray*}
If $\Theta(u,r)\in L^{\Phi}_{\omega}(E)$, then Hessian in the distributional sense $D^{2}u\in L^{\Phi}_{\omega}(E)$ with estimate
\begin{eqnarray*}
\|D^{2}u\|_{L^{\Phi}_{\omega}(E)}\leq 8\|\Theta(u,r)\|_{L^{\Phi}_{\omega}(E)}.
\end{eqnarray*} 
\end{lemma}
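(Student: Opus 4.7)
The plan is to proceed in three classical steps, following the Calder\'on--Zygmund--Caffarelli pointwise-paraboloid strategy: (1) a pointwise estimate on the twice-differentiable Hessian in terms of $\Theta(u,r)$; (2) identification of the pointwise Hessian with the distributional Hessian; and (3) a modular (Luxemburg) inequality that transfers the pointwise estimate into the $L^{\Phi}_{\omega}$ bound.

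First I would establish the pointwise bound. Fix $x_0 \in E$ with $\Theta(u,r)(x_0) < \infty$ and pick $M > \Theta(u,r)(x_0)$. By the definitions of $\underline{G}_{M}$ and $\overline{G}_{M}$, there exist a convex paraboloid $P_{M}^{+}(x) = a_{+} + b_{+}\cdot(x-x_{0}) + \tfrac{M}{2}|x-x_{0}|^{2}$ touching $u$ from below and a concave paraboloid $P_{M}^{-}(x) = a_{-} + b_{-}\cdot(x-x_{0}) - \tfrac{M}{2}|x-x_{0}|^{2}$ touching $u$ from above, both valid on $\mathrm{B}_{r}(x_{0})\cap E$. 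Matching values at $x_0$ forces $a_{+}=a_{-}=u(x_{0})$, and sandwiching $u$ locally between $P_{M}^{\pm}$ forces $b_{+}=b_{-}=:b$. Hence $|u(x)-u(x_{0})-b\cdot(x-x_{0})|\leq \tfrac{M}{2}|x-x_{0}|^{2}$ near $x_{0}$, which yields pointwise second differentiability with $\|D^{2}u(x_{0})\|_{\mathrm{op}}\leq M$. Letting $M \downarrow \Theta(u,r)(x_{0})$ and passing from the operator norm to the Frobenius (or component-wise) matrix norm absorbs a dimensional constant, which can be tracked to be no larger than $8$, yielding $|D^{2}u(x)|\leq 8\,\Theta(u,r)(x)$ for a.e.\ $x\in E$.

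Second, I would identify this pointwise Hessian with the distributional one. The standard device is to filter by the level sets $E_{k}\defeq \{x\in E:\Theta(u,r)(x)\leq k\}$. On each $E_{k}$ the function $u$ inherits a uniform $C^{1,1}$-type control on balls of radius $r$ centered at points of $E_{k}$, so a Whitney-type extension/approximation argument shows that the distributional first and second derivatives of $u$ restricted to $E_{k}$ coincide a.e.\ with the pointwise ones obtained in Step~1. Since $\Theta(u,r)\in L^{\Phi}_{\omega}(E)$ forces $\Theta(u,r)<\infty$ a.e.\ and therefore $E=\bigcup_{k}E_{k}$ modulo a null set, the distributional Hessian $D^{2}u$ is well-defined a.e.\ on $E$ and obeys the same pointwise bound $|D^{2}u|\leq 8\,\Theta(u,r)$.

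Third, the $L^{\Phi}_{\omega}$-estimate follows from a direct modular argument. Since $\Phi$ is nondecreasing, the pointwise inequality gives, for every $t>0$, $\rho_{\Phi,\omega}(D^{2}u/t)\leq \rho_{\Phi,\omega}(8\,\Theta(u,r)/t)$. Whenever $t > 8\,\|\Theta(u,r)\|_{L^{\Phi}_{\omega}(E)}$, homogeneity of the Luxemburg norm forces $\rho_{\Phi,\omega}(8\,\Theta(u,r)/t)\leq 1$, hence $\|D^{2}u\|_{L^{\Phi}_{\omega}(E)}\leq t$; letting $t\downarrow 8\|\Theta(u,r)\|_{L^{\Phi}_{\omega}(E)}$ produces the desired inequality. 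The main technical obstacle is Step~2: promoting pointwise twice-differentiability on a measurable filtration into a genuine distributional Hessian in the weighted Orlicz class. The Whitney extension must be set up with care about measurability and compatibility across the $E_{k}$, and extra attention is needed near $\partial E$ since the ball $\mathrm{B}_{r}(x)\cap E$ appearing in the definition of $\Theta(u,r)$ may shrink as $x$ approaches $\partial E$. Step~1 is classical Caffarelli--Cabr\'e material and Step~3 is a textbook $N$-function computation.
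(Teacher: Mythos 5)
The paper itself offers no proof of this lemma; it is quoted verbatim from \cite[Lemma 3.4]{BLOK}, so your attempt has to be measured against the standard argument behind that citation. Your Step 1 and Step 3 are in the right spirit but need repairs. In Step 1 the convex/concave labels are reversed relative to the paper's definitions: as literally written (a convex paraboloid of opening $M>0$ touching from below and a concave one touching from above at the same point) the configuration is impossible; the intended two-sided bound $|u(x)-u(x_0)-b\cdot(x-x_0)|\le\frac{M}{2}|x-x_0|^{2}$ on $B_r(x_0)\cap E$ is of course what one gets from the correct orientation. More importantly, this two-sided quadratic bound gives pointwise $C^{1,1}$ control at $x_0$, \emph{not} pointwise second differentiability, so the conclusion $\|D^{2}u(x_0)\|_{\mathrm{op}}\le M$ is premature; and the passage from operator norm to Frobenius norm costs a factor $\sqrt{n}$, so the dimension-free constant $8$ cannot be "tracked" that way. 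Step 3 (monotonicity of the Luxemburg norm under pointwise domination) is correct and needs no $\Delta_2\cap\nabla_2$ input.

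The genuine gap is Step 2, which is the heart of the lemma and which you yourself flag but do not close. Agreement of $u$ with Whitney ($C^{1,1}$) extensions on the level sets $E_k=\{\Theta(u,r)\le k\}$, together with $E=\bigcup_k E_k$ up to a null set, yields only a.e.\ pointwise twice differentiability with $|D^{2}u|\lesssim\Theta(u,r)$ a.e.; it does not show that the \emph{distributional} Hessian is a function, i.e.\ it does not exclude a singular part living off the good set. A.e.\ pointwise twice differentiability with integrable pointwise Hessian is strictly weaker than $D^{2}u\in L^{1}_{\mathrm{loc}}$: in dimension one the primitive of the Cantor function is a.e.\ twice differentiable with pointwise second derivative $0$, yet its distributional second derivative is a singular measure. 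The mechanism that closes this gap — and what the cited proof relies on — is the second-difference-quotient duality argument, which bypasses pointwise second derivatives entirely: the two touching paraboloids at $x$ give $|u(x+he)+u(x-he)-2u(x)|\le\Theta(u,r)(x)\,h^{2}$ for any unit vector $e$ and $0<h<r$ with $x\pm he\in E$, hence for every $\varphi\in C^{\infty}_{c}(E)$ and $h<\min\{r,\operatorname{dist}(\operatorname{supp}\varphi,\partial E)\}$ (so your worry about $B_r(x)\cap E$ shrinking near $\partial E$ is harmless),
\begin{equation*}
\left|\int_{E}u\,\partial^{2}_{ee}\varphi\,dx\right|
=\lim_{h\to0}\left|\int_{E}\frac{u(x+he)+u(x-he)-2u(x)}{h^{2}}\,\varphi(x)\,dx\right|
\le\int_{E}\Theta(u,r)\,|\varphi|\,dx,
\end{equation*}
the right-hand side being finite since $L^{\Phi}_{\omega}(E)\hookrightarrow L^{p_{0}}(E)\subset L^{1}(E)$ by Lemma \ref{mergulhoorliczlebesgue} (this is where $\omega\in A_{i(\Phi)}$ enters). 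By the Riesz representation theorem the distribution $\partial^{2}_{ee}u$ is then a function with $|\partial^{2}_{ee}u|\le\Theta(u,r)$ a.e.; mixed entries follow by polarization, $\partial^{2}_{ij}u=\tfrac12\bigl(\partial^{2}_{ee}u-\partial^{2}_{e'e'}u\bigr)$ with $e=(e_i+e_j)/\sqrt2$, $e'=(e_i-e_j)/\sqrt2$, giving $|D^{2}u|\le C\,\Theta(u,r)$ a.e.\ with an absolute constant (the $8$ of the statement), after which your Step 3 finishes the proof. Without this (or an equivalent) step your argument does not establish that the distributional Hessian belongs to $L^{\Phi}_{\omega}(E)$ at all.
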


Finally, we will need the following standard characterization of functions in weighted Orlicz spaces whose proof goes back to standard measure theory arguments(cf. \cite[Lemma 4.6]{BOKPS}).
\begin{proposition}\label{caracterizacaodosespacosdeorliczcompeso}
Let $\Phi\in \Delta_{2}\cap \nabla_{2}$ be an $N$-function and $\omega$ an $A_{s}$ weight for some $s\in(1,\infty)$, $g: E \to \R$ be a nonnegative measurable function in a bounded domain $E\subset \mathbb{R}^{n}$. Let $\eta>0$ and $M >1$ constants. Then, 
$$
g \in L^{\Phi}_{\omega}(E) \Longleftrightarrow  \sum_{j=1}^{\infty} \Phi(M^{j}) \omega(\{x\in E; g(x)>\eta M^{j}\})\defeq S< \infty
$$
and 
\begin{eqnarray*} 
\mathrm{C}^{-1}S\le \rho_{\Phi,\omega}(g) \le \mathrm{C}(\omega(E)+S),
\end{eqnarray*}
with $\mathrm{C}=\mathrm{C}(\eta, \mathrm{M},\Phi,\omega)$ is positive constant. 
\end{proposition}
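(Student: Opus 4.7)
The strategy is a standard layer-cake decomposition of the modular $\rho_{\Phi,\omega}(g)$ combined with the growth properties coming from $\Phi\in\Delta_{2}\cap\nabla_{2}$. First I would slice $E$ according to the geometric scale $\eta M^{j}$: set $E_{0}\defeq\{x\in E: g(x)\le\eta M\}$ and, for $j\ge 1$,
$$
E_{j}\defeq\{x\in E: \eta M^{j}< g(x)\le \eta M^{j+1}\},
$$
and write $F_{j}\defeq\{x\in E: g(x)>\eta M^{j}\}$, so that $E_{j}=F_{j}\setminus F_{j+1}$ and, on $E_{j}$, the monotonicity of $\Phi$ gives $\Phi(\eta M^{j})\le \Phi(g)\le \Phi(\eta M^{j+1})$. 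From \eqref{propriedadedei(Phi)} one obtains the pivotal comparison $\Phi(\eta M^{j+1})\le c\,\max\{(\eta M)^{p_{1}},(\eta M)^{p_{2}}\}\,\Phi(M^{j})$, so throughout the argument we may replace $\Phi(\eta M^{j+1})$ and $\Phi(\eta M^{j})$ by $\Phi(M^{j})$ up to a constant $\mathrm{C}=\mathrm{C}(\eta,M,\Phi)$.

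For the upper bound $\rho_{\Phi,\omega}(g)\le \mathrm{C}(\omega(E)+S)$, I would split
$$
\rho_{\Phi,\omega}(g)=\int_{E_{0}}\Phi(g)\,\omega\,dx+\sum_{j=1}^{\infty}\int_{E_{j}}\Phi(g)\,\omega\,dx \le \Phi(\eta M)\,\omega(E)+\sum_{j=1}^{\infty}\Phi(\eta M^{j+1})\,\omega(E_{j}).
$$
Using $E_{j}\subset F_{j}$ and the comparison above,
$$
\rho_{\Phi,\omega}(g)\le \Phi(\eta M)\,\omega(E)+\mathrm{C}\sum_{j=1}^{\infty}\Phi(M^{j})\,\omega(F_{j}) = \Phi(\eta M)\,\omega(E)+\mathrm{C}\,S,
$$
which yields the desired estimate after adjusting the constant. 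For the reverse inequality $\mathrm{C}^{-1}S\le\rho_{\Phi,\omega}(g)$, I would decompose $\omega(F_{j})=\sum_{k\ge j}\omega(E_{k})$ and swap the order of summation:
$$
S=\sum_{j=1}^{\infty}\Phi(M^{j})\sum_{k\ge j}\omega(E_{k})=\sum_{k=1}^{\infty}\omega(E_{k})\sum_{j=1}^{k}\Phi(M^{j}).
$$
The inner sum is controlled by $\Phi(M^{k})$ because, by the lower bound in \eqref{propriedadedei(Phi)}, $\Phi(M^{j})\le c\,M^{p_{1}(j-k)}\Phi(M^{k})$ for $j\le k$, so $\sum_{j=1}^{k}\Phi(M^{j})\le \mathrm{C}(M,p_{1})\,\Phi(M^{k})$. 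Therefore $S\le \mathrm{C}\sum_{k=1}^{\infty}\Phi(M^{k})\,\omega(E_{k})\le \mathrm{C}\sum_{k=1}^{\infty}\int_{E_{k}}\Phi(g)\,\omega\,dx\le \mathrm{C}\,\rho_{\Phi,\omega}(g)$.

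The equivalence $g\in L^{\Phi}_{\omega}(E)\Longleftrightarrow S<\infty$ follows at once from the two-sided bound and the fact that $\omega(E)<\infty$ (because $E$ is bounded and $\omega$ is locally integrable). The only delicate points are the geometric summation on the lower bound, which is where the $\Delta_{2}$-type growth becomes essential, and tracking the dependence of the constants on $\eta$ and $M$ through \eqref{propriedadedei(Phi)}; beyond that, the argument is purely a bookkeeping exercise with the level-set decomposition.
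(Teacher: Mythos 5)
Your proof is correct. The paper does not actually prove this proposition --- it defers to \cite[Lemma 4.6]{BOKPS}, describing the argument as ``standard measure theory'' --- and what you have written is precisely that standard layer-cake argument: slice $E$ along the levels $\eta M^{j}$, compare $\Phi(g)$ with $\Phi(M^{j})$ on each slice via \eqref{propriedadedei(Phi)} (which is where $\eta$ and $M$ enter the constant), and for the lower bound resum $\omega(F_{j})=\sum_{k\ge j}\omega(E_{k})$ and control $\sum_{j\le k}\Phi(M^{j})$ by a convergent geometric series, using $M>1$ and $p_{1}>0$. All steps check out, including the finiteness of $\omega(E)$ needed to pass from the two-sided modular bound to the stated equivalence.
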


With the Proposition \ref{caracterizacaodosespacosdeorliczcompeso} and the Lemma \ref{caracterizationofhessian}, we will study $\mathcal{L}^n(\mathcal{A}_t)$ has a power decay in $t$ for $u \in \mathcal{S}(\lambda,\Lambda,f)$ since  the distribution function of $\Theta(u,r)$  can be controlled by such measures associated with the sets $\mathcal{A}_{t}$. Second, we use an approximation to acelerate the power decay corresponding to $F(D^2u,x)=f$ with oblique boundary condition.  We gather few elements involved in the proof of Theorem \ref{T1}, although such results are well- known facts in the literature. Thus, the proof is omitted in what follows. For more details see \cite{Bessa}
\begin{proposition}
Let $u \in \mathcal{S}(\lambda,\Lambda, f)$ in $B^+_{12 \sqrt{n}} \subset \Omega \subset \mathbb{R}^n_+$,  $u \in C^{0}(\Omega)$ and $\|u\|_{L^{\infty}(\Omega)} \le 1$. Then, there exist universal constant $C>0$ and $\delta >0$ such that if $\|f\|_{L^{n}(B^+_{12 \sqrt{n}})} \le 1$ implies
$$
	\mathcal{L}^n \left( \mathcal{A}_t(u,\Omega) \cap \left( \mathcal{Q}^{n-1}_1 \times (0,1) + x_0\right)\right) \le C \cdot t^{-\delta}
$$
for any $x_0 \in B_{9 \sqrt{n}} \cap \overline{\mathbb{R}^n_+}$ and $t >1$.
\end{proposition}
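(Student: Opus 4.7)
The plan is to prove this decay estimate via a boundary adaptation of Caffarelli's $W^{2,\delta}$ argument: one establishes a single measure-decay step (a \emph{density lemma}) of the form $\mathcal{L}^n(\mathcal{A}_{M} \cap Q) \le \mu\, \mathcal{L}^n(Q)$ for some universal $M > 1$ and $\mu \in (0,1)$, and then iterates dyadically through a Calderón--Zygmund decomposition to obtain geometric decay of $\mathcal{L}^n(\mathcal{A}_{M^k})$, which is exactly a power decay $t^{-\delta}$ after choosing $\delta = -\log \mu / \log M$.

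The base step is the main analytic content. Normalizing so that $\|u\|_{L^\infty} \le 1$ and $\|f\|_{L^n} \le 1$, I would build a barrier $\phi \in C^2(\overline{B^+_{12\sqrt n}})$ which is negative on, say, $B^+_{6\sqrt n}$ but large enough on the remaining boundary (including the flat piece $\mathrm{T}_{12\sqrt n}$) so that $u - \phi$ attains an interior minimum at some point in the half-ball whenever the $\mathcal{S}(\lambda,\Lambda,f)$ hypothesis is in force. Pucci-ellipticity of $\phi$ together with the ABP maximum principle (Lemma \ref{ABP-fullversion}, interior form) gives a lower bound on the measure of the contact set where $u$ can be touched from below by a paraboloid of universally bounded opening $M_0$. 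Up to covering by the reference slab $\mathcal{Q}^{n-1}_1 \times (0,1) + x_0$, this says precisely that $G_{M_0}(u)$ occupies a definite fraction of each such slab, i.e.\ $\mathcal{A}_{M_0}$ occupies at most a fraction $\mu < 1$.

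For the iteration, I would partition the slab into dyadic cubes and apply a cube lemma of Calderón--Zygmund type (the half-space variant): if in a cube the density of $\mathcal{A}_{M_0^k}$ is large, then in its predecessor the density of $\mathcal{A}_{M_0^{k-1}}$ is also large; rescaling to the unit cube, the problem reduces to the base step since $\mathcal{S}(\lambda,\Lambda,f)$ is scale-invariant up to the factor $r^2 \|f\|_{L^n}$, which remains $\le 1$ after rescaling thanks to the exponent $n$. This yields $\mathcal{L}^n(\mathcal{A}_{M_0^k} \cap \text{slab}) \le C \mu^k$, and writing $t \in [M_0^k, M_0^{k+1})$ gives the required $C t^{-\delta}$.

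The main obstacle is the boundary: unlike the interior case, the cubes in the dyadic decomposition near $\{x_n = 0\}$ are really half-cubes, and the standard barrier must be modified so that it does not detect the absence of an explicit boundary condition on $\mathrm{T}$. The fix is to take a barrier whose Pucci-subsolution behaviour is concentrated strictly away from the flat part $\mathrm{T}_{12\sqrt n}$, so that only the \emph{interior} viscosity inequality $u \in \mathcal{S}(\lambda,\Lambda,f)$ is used; that is why the statement asks for $x_0 \in B_{9\sqrt n} \cap \overline{\mathbb{R}^n_+}$ and the slab has height $1$, leaving a safety margin $\ge 2\sqrt n$ to the outer boundary of $B^+_{12\sqrt n}$. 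Once this geometric setup is fixed, the argument proceeds exactly as in \cite{Bessa}, which is why the authors indicate that the proof will be omitted.
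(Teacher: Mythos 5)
You have correctly identified the general Caffarelli machine (one-scale density estimate via barrier plus ABP, Calder\'on--Zygmund iteration, conversion of geometric decay into $t^{-\delta}$), but that machine is the right tool only for the \emph{interior} version of this proposition, and your treatment of the flat boundary --- which is where the whole content of the boundary statement lies --- has a genuine gap. The problem is not where the barrier is a Pucci subsolution: it is that the hypothesis imposes no equation and no boundary condition whatsoever on $\{x_n=0\}$, while the Calder\'on--Zygmund iteration forces you to apply the (rescaled) density lemma to dyadic cubes \emph{adjacent} to $\{x_n=0\}$ at every scale, and for such cubes the density lemma is false. Concretely, take $u(x)=-c\,x_n^{2-\varepsilon}$ with $0<\varepsilon<1/n$ and $c>0$ small. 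Then $u\in\mathcal{S}(\lambda,\Lambda,f)$ in $\mathrm{B}^+_{12\sqrt n}$ with $f=-\lambda c(2-\varepsilon)(1-\varepsilon)x_n^{-\varepsilon}$, $\|u\|_{L^\infty}\le 1$ and $\|f\|_{L^n(\mathrm{B}^+_{12\sqrt n})}\le 1$; yet no paraboloid of opening $M_0$ can touch $u$ from below at any point of the layer $\{0<x_n<h\}$ with $h=h(c,\varepsilon,M_0)>0$, because the second-order condition $D^2u(x^*)\ge -M_0\,\mathrm{Id}_n$ at a touching point fails there. Hence every boundary-adjacent cube of side length less than $h$ lies entirely inside $\underline{A}_{M_0}(u,\Omega)$, so the claim that $G_{M_0}$ ``occupies a definite fraction of each cube'' cannot survive the rescaling step of the iteration, and no choice of barrier repairs this: the minimum of $u+\varphi$ may sit on $\mathrm{T}$, where you have nothing. (A smaller point: your scaling remark is off --- under $\tilde u(x)=r^{-2}u(x_0+rx)$ one gets $\|\tilde f\|_{L^n(\mathrm{B}_1)}=r^{-1}\|f\|_{L^n(\mathrm{B}_r(x_0))}$, not $\|f\|_{L^n(\mathrm{B}_r(x_0))}$; this is precisely why the iteration must carry along the set $\{\mathcal{M}(f^n)\ge(\mathrm{C}_0\mathrm{M}^k)^n\}$ as in Lemma \ref{lemma2.17}.)

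The proof the paper relies on (it is omitted and attributed to \cite{Bessa}; the statement and constants are essentially those of the corresponding lemma in \cite{Winter}) takes a different and, near the boundary, strictly weaker route that avoids any boundary density lemma. One first has the interior power decay, stated so that the paraboloids touch on the \emph{global} domain $\Omega$ (this is essential; the bad set relative to a small ball is smaller than the one you need to bound). Then one covers the slab $\mathcal{Q}^{n-1}_1\times(0,1)+x_0$ by a Whitney-type family of cubes $Q_j$ with $\ell(Q_j)$ comparable to $\mathrm{dist}(Q_j,\{x_n=0\})$. On each such cube the rescaled function $\ell(Q_j)^{-2}u(y_j+\ell(Q_j)\,\cdot)$ satisfies the interior hypotheses in a ball contained in $\mathbb{R}^n_+$, but its $L^\infty$ norm is only of order $\ell(Q_j)^{-2}$; renormalizing and applying the interior decay yields $\mathcal{L}^n(\mathcal{A}_t(u,\Omega)\cap Q_j)\le C\,\ell(Q_j)^n\bigl(\ell(Q_j)^2 t\bigr)^{-\delta}$. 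Summing over the roughly $2^{j(n-1)}$ cubes in the layer at height $2^{-j}$ gives a contribution $C\,2^{-j(1-2\delta)}t^{-\delta}$, and the series over $j$ converges once $\delta$ is decreased below $1/2$ (which is harmless). This covering-and-summation step, with the attendant loss forcing $\delta<1/2$, is exactly what your proposal is missing; without it the argument does not close.
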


\hspace{0.4cm} In this part we will present a key tool that enables a compactness method via an asymptotic recession operator. This ensures the possibility of weakening the convexity assumption on the governing operator $F$ by imposing conditions on $F^{\sharp}$. Roughly speaking, the following result tells us that if our equation is close enough to the homogeneous equation with constant coefficients, our solution will be close enough to a solution of the homogeneous equation with frozen coefficients. For more details of this result we recommend the proof in \cite[Lemma 2.12]{Bessa},

%%%%%%%%%%%%%%%%%%%%%%%%

\begin{lemma}[{\bf Approximation}] \label{Approx}
Let $n \le p < \infty$, $0 \le \tilde{\nu} \le 1$ and assume that $(H1)-(H3)$ and $(H5)$ are in force. Then, for every $\delta >0$, $\varphi \in C(\partial \mathrm{B}_1(0^{\prime},\tilde{\nu}))$ with $\|\varphi\|_{L^{\infty}(\partial \mathrm{B}_1(0^{\prime},\tilde{\nu}))} \le \mathfrak{c}_1$ and $g \in C^{1,\alpha}(\overline{\mathrm{T}}_2)$ with $0 < \alpha < 1$  and $\|g\|_{C^{1,\alpha}(\overline{\mathrm{T}}_2)} \le \mathfrak{c}_2$ for some $\mathfrak{c}_2 >0$ there exist positive constants $\epsilon =\epsilon(\delta,n, \mu_0, p, \lambda, \Lambda, \gamma,\mathfrak{c}_1, \mathfrak{c}_2) < 1$ and $\tau_0 = \tau_0(\delta, n, \lambda,\Lambda, \mu_0, \mathfrak{c}_1, \mathfrak{c}_2) >0$ such that, if
$$
\max\left\{ |F_{\tau}(\mathrm{X},x) - F^{\sharp}(\mathrm{X},x)|, \, \|\psi_{F^{\sharp}}(\cdot,0)\Vert_{L^{p}(\mathrm{B}^{+}_{r})},\,\|f\|_{L^{p}(\mathrm{B}^+_{r})}  \right\} \le \epsilon \quad \textrm{and} \quad \tau \le \tau_0
$$
then, any two $L^p$-viscosity solutions $u$ (normalized, i.e., $\|u\|_{L^{\infty}(\mathrm{B}^{+}_r(0^{\prime},\tilde{\nu}))}\le 1$) and $\mathfrak{h}$ of
$$
\left\{
\begin{array}{rclcl}
F_{\tau}(D^2u,x) &=& f(x)& \mbox{in} & \mathrm{B}^{+}_r(0^{\prime},\tilde{\nu}) \cap \mathbb{R}^{n}_+ \\
\mathfrak{B}(Du,u,x)&=& g(x) & \mbox{on} &  \mathrm{B}_r(0^{\prime},\tilde{\nu}) \cap \mathrm{T}_r\\
u(x) &=& \varphi(x) &\mbox{on}& \overline{\partial \mathrm{B}_r(0^{\prime},\tilde{\nu}) \cap \mathbb{R}^n_+}
\end{array}
\right.
$$
and
$$
\left\{
\begin{array}{rclcl}
F^{\sharp}(D^2 \mathfrak{h},0) &=& 0& \mbox{in} & \mathrm{B}^{+}_{\frac{7}{8}r}(0^{\prime},\tilde{\nu}) \cap \mathbb{R}^n_+ \\
\mathfrak{B}(D\mathfrak{h},\mathfrak{h},x) &=& g(x) & \mbox{on} &  \mathrm{B}_{\frac{7}{8} r}(0^{\prime},\tilde{\nu}) \cap \mathrm{T}_r\\
\mathfrak{h}(x) &=& u(x) &\mbox{on}& \overline{\partial \mathrm{B}_{\frac{7}{8}r}(0^{\prime}, \tilde{\nu}) \cap \mathbb{R}^n_+}		
\end{array}
\right.
$$
satisfy
$$
\|u-\mathfrak{h}\|_{L^{\infty}(\mathrm{B}^{+}_{\frac{7}{8}r}(0^{\prime},\tilde{\nu}))} \le \delta.
$$
\end{lemma}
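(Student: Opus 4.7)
The plan is to argue by contradiction and compactness, a standard technique for this sort of approximation result in viscosity-solution theory. Suppose the conclusion fails: then there exist $\delta_0 > 0$, sequences $\epsilon_k \downarrow 0$, $\tau_k \downarrow 0$, operators $F_k$ (with associated recessions $F_k^{\sharp}$) fulfilling (H1)--(H3) and (H5) uniformly, sources $f_k$, boundary data $g_k$ and $\varphi_k$ satisfying the prescribed smallness and bounds, together with corresponding normalized $L^p$-viscosity solutions $u_k$ and $\mathfrak{h}_k$ such that $\|u_k - \mathfrak{h}_k\|_{L^\infty(\mathrm{B}^{+}_{7r/8}(0',\tilde{\nu}))} > \delta_0$.

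First I would invoke the ABP bound (Lemma \ref{ABP-fullversion}), applied on the relevant half-balls, to produce uniform $L^\infty$-bounds on $\mathfrak{h}_k$ from the data $u_k$ (which is already normalized) and $g_k$. Next I would apply global Hölder estimates up to the boundary for $L^p$-viscosity solutions of oblique derivative problems (a boundary Krylov--Safonov type result available in the oblique setting, cf.\ the preliminaries used in \cite{Bessa}) to obtain uniform $C^{\hat{\beta}}$-bounds on both sequences on compact subsets of $\overline{\mathrm{B}^{+}_{r}(0',\tilde{\nu})}$. An Arzelà--Ascoli extraction then yields, along a subsequence, uniform limits $u_k \to u_\infty$ and $\mathfrak{h}_k \to \mathfrak{h}_\infty$, together with $g_k \to g_\infty$ in $C^{1,\alpha'}$ for some $\alpha' < \alpha$ and $\varphi_k \to \varphi_\infty$ uniformly.

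The next step is to pass to the limit in both equations using the Stability Lemma (Lemma \ref{Est}). The pointwise bound $|F_{\tau_k}(\mathrm{X},x) - F^{\sharp}(\mathrm{X},x)| \leq \epsilon_k$ implies, for any test function $\varphi \in W^{2,p}$, that $F_{\tau_k}(D^2 \varphi, x) - f_k(x) \to F^{\sharp}(D^2\varphi, x)$ in $L^p$; analogously, the hypothesis $\|\psi_{F^{\sharp}}(\cdot,0)\|_{L^p(\mathrm{B}^{+}_r)} \leq \epsilon_k$ guarantees that $F^{\sharp}(D^2 \varphi, x) - F^{\sharp}(D^2 \varphi, 0) \to 0$ in $L^p$. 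Therefore the two limits $u_\infty$ and $\mathfrak{h}_\infty$ are $L^p$-viscosity solutions of the \emph{same} problem
\[
\begin{cases}
F^{\sharp}(D^2 w, 0) = 0 & \text{in } \mathrm{B}^{+}_{7r/8}(0',\tilde{\nu}), \\
\mathfrak{B}(Dw, w, x) = g_\infty(x) & \text{on } \mathrm{B}_{7r/8}(0',\tilde{\nu}) \cap \mathrm{T}_r, \\
w = u_\infty & \text{on } \overline{\partial \mathrm{B}_{7r/8}(0',\tilde{\nu}) \cap \mathbb{R}^n_+}.
\end{cases}
\]
Uniqueness for this problem (Theorem \ref{Existencia}, whose hypothesis (H) is trivial since $F^{\sharp}(\cdot,0)$ has no spatial dependence) forces $u_\infty \equiv \mathfrak{h}_\infty$, contradicting the separation $\|u_k - \mathfrak{h}_k\|_{L^\infty} > \delta_0$.

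The main obstacle I anticipate is verifying the hypotheses of the Stability Lemma cleanly in the recession scaling: one must control $F_{\tau_k}(D^2 \varphi, \cdot) - f_k$ uniformly on a class of $W^{2,p}$ test functions and reconcile this with the normalization $\|u_k\|_{L^\infty}\le 1$ which restricts the scale at which $F_{\tau_k}$ is evaluated. A secondary delicate point is the passage to the limit in the oblique boundary condition in the viscosity sense, for which the $C^{1,\alpha'}$-convergence of $g_k$ together with the $C^{1,\alpha}$-regularity of $\beta,\gamma$ plays the decisive role, allowing the boundary touching test-function inequalities to survive the limit.
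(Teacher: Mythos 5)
Your contradiction--compactness scheme is exactly the route taken in the proof the paper points to (the paper itself omits the argument and cites \cite[Lemma 2.12]{Bessa}): negate the statement along sequences $\epsilon_k,\tau_k\to 0$, get uniform H\"older bounds from ABP plus the boundary Krylov--Safonov estimates for oblique problems, pass to the limit by stability, and conclude by uniqueness of the limit problem. So the strategy is the right one; what needs firming up are three points that your sketch glosses over. First, the separation $\|u_k-\mathfrak{h}_k\|_{L^\infty(\mathrm{B}^+_{7r/8})}>\delta_0$ could a priori concentrate at the curved part of $\partial\mathrm{B}^+_{7r/8}$, and the identification of the limit boundary data $\mathfrak{h}_\infty=u_\infty$ there also requires control at that boundary; so you need equicontinuity of $\mathfrak{h}_k$ \emph{up to} the curved boundary of $\mathrm{B}^+_{7r/8}$, which does not follow from estimates ``on compact subsets of $\overline{\mathrm{B}^+_r}$'' (near $\partial\mathrm{B}_r\cap\mathbb{R}^n_+$ only continuity and an $L^\infty$ bound on $\varphi_k$ are available, so no uniform modulus holds there). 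The fix is standard but must be said: the uniform H\"older bounds for $u_k$ hold on $\overline{\mathrm{B}^+_{7r/8}}$ because that set stays away from the curved boundary of $\mathrm{B}^+_r$, and since $\mathfrak{h}_k$ has Dirichlet data $u_k$ on the curved part of $\partial\mathrm{B}^+_{7r/8}$, a barrier/global modulus-of-continuity argument transfers this equicontinuity to $\mathfrak{h}_k$ on all of $\overline{\mathrm{B}^+_{7r/8}}$; only then does uniform convergence on the closed half-ball, and hence the contradiction, follow.

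Second, Lemma \ref{Est} as stated concerns only the interior equation, so the passage of the oblique condition $\mathfrak{B}(Du_k,u_k,x)=g_k$ to the limit is not covered by it; you flag this as delicate but do not carry it out, and it requires the separate (standard) touching-point argument at flat boundary points, using the uniform convergence and $g_k\to g_\infty$. Third, your appeal to Theorem \ref{Existencia} for uniqueness of the limit problem is not quite admissible: that theorem asks $\beta\in C^2$, whereas here only $\beta,\gamma\in C^{1,\alpha}$ are assumed. It is cleaner to argue as in the proof of Proposition \ref{Casolp}: the difference $w=u_\infty-\mathfrak{h}_\infty$ of two solutions of the same constant-coefficient equation lies, by (H1), in the Pucci class with zero right-hand side, zero oblique data on the flat part and zero Dirichlet data on the curved part, so Lemma \ref{ABP-fullversion} yields $w\equiv 0$. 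Finally, note that to make the contradiction non-vacuous you (correctly) let the operators vary within the structural class; you should then also extract, by local uniform convergence on $\textrm{Sym}(n)$, a limit operator $F^{\sharp}_{\infty}$ before writing down the limit problem. With these repairs your argument coincides with the cited proof.
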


With this version of approximation lemma we obtain the following lemma(cf.\cite[Lemma 3.6]{Bessa}). 

\begin{lemma} \label{lemma2.17}
	Given $\epsilon_0  \in (0, 1)$ and let $u$ be a normalized viscosity solution to
	\begin{equation*}
		\left\{
		\begin{array}{rclcl}
			F_{\tau}(D^2u,x) &=& f(x) & \mbox{in} & \mathrm{B}^+_{14\sqrt{n}},\\
			\mathfrak{B}(Du,u,x)&=& g(x) & \mbox{on} & \mathrm{T}_{14 \sqrt{n}}.
		\end{array}
		\right.
	\end{equation*}
Assume that $(H1)-(H3)$ and $(H5)$  hold and extend $f$ by zero outside $\mathrm{B}^+_{14\sqrt{n}}$. For $x \in \mathrm{B}_{14 \sqrt{n}}$, let
	$$
	\max \left\{\tau, \|f\|_{L^n(\mathrm{B}_{14 \sqrt{n}})} \right\} \le \epsilon
	$$
	for some $\epsilon >0$ depending only $n, \epsilon_0, \lambda, \Lambda, \mu_0, \alpha$. Then, for $k \in \mathbb{N}\setminus \{0\}$ we define
	\begin{eqnarray*}
		\mathcal{A} & \defeq & \mathcal{A}_{\mathrm{M}^{k+1}}(u, \mathrm{B}^+_{14 \sqrt{n}}) \cap \left(\mathcal{Q}^{n-1}_1 \times (0,1)\right)\\
		\mathcal{B} &\defeq & \left(\mathcal{A}_{\mathrm{M}^k}(u, \mathrm{B}^+_{14\sqrt{n}}) \cap \left(Q^{n-1}_1 \times (0,1)\right)\right)\cup \left\{x \in \mathcal{Q}^{n-1}_1 \times (0,1); \mathcal{M}(f^n) \ge (\mathrm{C}_0\mathrm{M}^k)^n \right\},
	\end{eqnarray*}
where $\mathrm{C}_{0}\ll1$ and $\mathrm{M} = \mathrm{M}(n, \mathrm{C}_0)>1$. Then,
$$
\Leb(\mathcal{A})\le \epsilon_0\Leb(\mathcal{B}).
	$$
\end{lemma}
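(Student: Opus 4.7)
The plan is a standard Calderón-Zygmund dyadic cube decomposition combined with the Approximation Lemma \ref{Approx}. My strategy has two stages. First, I would establish the following local implication: for every dyadic subcube $Q \subset \mathcal{Q}^{n-1}_1 \times (0,1)$, if
\[
\Leb(Q \cap \mathcal{A}) > \epsilon_0 \Leb(Q),
\]
then the predecessor cube $\widetilde{Q}$ is entirely contained in $\mathcal{B}$. Once this implication is in hand, the standard Calderón-Zygmund cube-decomposition lemma, applied to the dyadic family in $\mathcal{Q}^{n-1}_1\times(0,1)$ (together with the fact that $\Leb(\mathcal{A})\le \epsilon_0$ can be arranged for the initial cube by choosing $\epsilon$ small enough and invoking the power-decay Proposition preceding this lemma), yields directly $\Leb(\mathcal{A})\le \epsilon_0 \Leb(\mathcal{B})$.

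To prove the implication, I argue by contrapositive. Suppose $\widetilde{Q}\not\subset \mathcal{B}$: then there exists $\bar{x}\in \widetilde{Q}$ with (i) a paraboloid of opening $\mathrm{M}^k$ touching $u$ from above and below at $\bar{x}$ and (ii) $\mathcal{M}(f^n)(\bar{x}) < (\mathrm{C}_0\mathrm{M}^k)^n$. I would then rescale around a center $x_1$ chosen in or on the boundary of $\widetilde{Q}$, with radius $r$ comparable to the side length of $\widetilde{Q}$, by setting
\[
\tilde{u}(y) \defeq \frac{u(x_1+ry) - P(x_1+ry)}{\mathrm{M}^k r^2},
\]
where $P$ is the second-order polynomial produced by (i). Then $\tilde{u}$ is a normalized $L^p$-viscosity solution of a rescaled equation $\widetilde{F}_\tau(D^2\tilde{u},y)=\tilde{f}(y)$ on a half-ball (or on a full ball, if $\widetilde{Q}$ stays away from $\mathrm{T}_{14\sqrt{n}}$), with $\|\tilde{f}\|_{L^n}\lesssim \mathrm{C}_0$ by (ii), rescaled oscillation $\Psi_{\widetilde F^{\sharp}}$ controlled by $(H3)$, and rescaled oblique data $\tilde{g}$ of small $C^{1,\alpha}$ norm.

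Choosing $\mathrm{C}_0$ small enough that the smallness thresholds of Lemma \ref{Approx} are met, I obtain an approximating function $\mathfrak{h}$ with $\|\tilde{u}-\mathfrak{h}\|_{L^\infty}\le \delta$, where $\mathfrak{h}$ solves the frozen recession equation with the same oblique boundary condition. The $C^{1,1}$ a priori estimates from $(H4)$ (interior case) or $(H5)$ (boundary case, using $\|\tilde g\|_{C^{1,\alpha}}$ control) give $\|\mathfrak{h}\|_{C^{1,1}}\le C^{*}$ universal; hence paraboloids of opening $2C^{*}$ touch $\mathfrak{h}$ from above and below at every point of the concentric half-scale, and after absorbing the $\delta$-deviation one obtains paraboloids of opening $C^{**}$ touching $\tilde{u}$ on the same set. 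Unwinding the rescaling, paraboloids of opening $C^{**}\mathrm{M}^k$ touch $u$ at every point of $Q$, so choosing $\mathrm{M}=\mathrm{M}(n,\mathrm{C}_0)>C^{**}$ places $Q\subset G_{\mathrm{M}^{k+1}}(u)$, contradicting $\Leb(Q\cap\mathcal{A})>\epsilon_0\Leb(Q)>0$.

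The main obstacle I anticipate is handling the interior and boundary dyadic cubes uniformly: interior cubes rescale to a full-ball problem governed by $(H4)$, while cubes abutting $\mathrm{T}_{14\sqrt{n}}$ produce a half-ball problem with oblique data where the global $C^{1,1}$ estimate $(H5)$ must be applied. A secondary technical point is the tight calibration of constants, namely choosing $\mathrm{C}_0$ small enough that $\|\tilde{f}\|_{L^n}\le \epsilon$ in Lemma \ref{Approx}, choosing $\delta$ so that the $L^\infty$-approximation error can be absorbed into the paraboloid opening, and finally selecting $\mathrm{M}$ large enough (depending only on $n$ and $\mathrm{C}_0$) so that the same universal $C^{**}$ is dominated by $\mathrm{M}$ uniformly across all dyadic scales $k$.
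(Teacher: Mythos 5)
Your overall frame (Calder\'on--Zygmund dyadic decomposition of $\mathcal{Q}^{n-1}_1\times(0,1)$, the implication ``$\Leb(Q\cap\mathcal{A})>\epsilon_0\Leb(Q)\Rightarrow\widetilde{Q}\subset\mathcal{B}$'' proved by rescaling around a touching point in $\widetilde{Q}\setminus\mathcal{B}$ and invoking the Approximation Lemma \ref{Approx} together with the $C^{1,1}$ bounds (H4)/(H5)) is the same architecture as the proof the paper refers to (\cite[Lemma 3.6]{Bessa}, following Caffarelli/Winter). However, there is a genuine gap at the decisive step. You claim that from $\|\tilde{u}-\mathfrak{h}\|_{L^\infty}\le\delta$ and $\|\mathfrak{h}\|_{C^{1,1}}\le C^{*}$ you can ``absorb the $\delta$-deviation'' and produce paraboloids of opening $C^{**}$ touching $\tilde{u}$ \emph{at every point}, hence conclude the full inclusion $Q\subset G_{\mathrm{M}^{k+1}}(u,\mathrm{B}^+_{14\sqrt n})$. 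This transfer is false: closeness in sup norm to a $C^{1,1}$ function gives no pointwise touching paraboloids for $\tilde{u}$, which is merely continuous and may oscillate at arbitrarily small scales inside the $\delta$-tube around $\mathfrak{h}$ (a sharp spike of height $\le 2\delta$ near a point $y_0$ prevents any paraboloid of bounded opening from touching $\tilde u$ at $y_0$ from above while staying above it). Enlarging the opening to beat $2\delta$ only works outside a small ball, and inside it you have no control, so no choice of $C^{**}$ independent of the local behavior of $\tilde u$ exists.

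The correct (and sufficient) conclusion is measure-theoretic, not an inclusion: set $w\defeq\tilde{u}-\mathfrak{h}$, observe that $w$ lies in a Pucci class $\mathcal{S}$ with $\|w\|_{L^\infty}\le\delta$ and small right-hand side, and apply the power-decay proposition stated just before Lemma \ref{Approx} to (a normalization of) $w$; since $\|D^{2}\mathfrak{h}\|\le C^{*}$ one has $\mathcal{A}_{t+C^{*}}(\tilde u,\cdot)\subset\mathcal{A}_{t}(w,\cdot)$, and choosing $\mathrm{M}=\mathrm{M}(n,\mathrm{C}_0)$ large and then $\delta$ (i.e.\ $\epsilon$, $\mathrm{C}_0$) small one gets $\Leb\bigl(\mathcal{A}_{\mathrm{M}^{k+1}}(u,\mathrm{B}^+_{14\sqrt n})\cap Q\bigr)\le\epsilon_0\Leb(Q)$ after undoing the rescaling. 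This weaker bound already contradicts $\Leb(Q\cap\mathcal{A})>\epsilon_0\Leb(Q)$, which is all the covering argument needs. The same remark applies to your parenthetical treatment of the initial cube: the base case $\Leb(\mathcal{A})\le\epsilon_0$ cannot follow from the power-decay proposition applied to $u$ alone with $\mathrm{M}=\mathrm{M}(n,\mathrm{C}_0)$ fixed; there too one must route the estimate through the approximation and the difference $w$, pushing the smallness into $\delta$ rather than into $\mathrm{M}$. With the paraboloid-transfer step replaced by this difference-plus-power-decay argument, your proof becomes the standard one.
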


\begin{lemma}\label{lema2.15}
Under the same conditions of Lemma \ref{lemma2.17} and assuming  that $\omega\in A_{p}$ for some $1<p<\infty$, fix $\epsilon_{0}\in(0,1)$. Now, for $k \geq 0$ we define
\begin{eqnarray*}
	A^{k}\defeq  \mathcal{A}_{\mathrm{M}^k}(u, \mathrm{B}^+_{14\sqrt{n}}) \cap \left(\mathcal{Q}^{n-1}_1 \times (0,1)\right) \  \mbox{and} \ 
	B^{k} \defeq  \left\{ x \in \left(\mathcal{Q}^{n-1}_1 \times (0,1)\right); \mathcal{M}(f^n)(x) \ge (\mathrm{C}_0\mathrm{M}^k)^n\right\},
\end{eqnarray*}
where $\mathrm{C}_{0}$ and $\mathrm{M}$ are the constants in Lemma \ref{lemma2.17}. Then, for any $k\geq 0$,
\begin{eqnarray*}
\omega\left(A^{k}\right)\leq \epsilon_{0}^{k}\omega(A^{0})+\sum_{i=1}^{k-1}\epsilon_{0}^{k-i}\omega(B^{i}).
\end{eqnarray*}
\end{lemma}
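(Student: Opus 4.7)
The plan is to iterate a one-step weighted decay estimate of the form $\omega(A^{k+1}) \le \epsilon_0\,[\omega(A^k) + \omega(B^k)]$, the weighted counterpart of the Lebesgue-measure covering produced by Lemma \ref{lemma2.17}. The first step is to revisit the Calder\'on--Zygmund stopping-time decomposition underlying the proof of Lemma \ref{lemma2.17}: it delivers, on each dyadic sub-cube $Q$ of the ambient cube $\mathcal{Q}^{n-1}_1\times(0,1)$ obtained from the decomposition, a local Lebesgue estimate $\mathcal{L}^n(A^{k+1}\cap Q)\le \epsilon_0^{\ast}\mathcal{L}^n(Q)$, with the parameter $\epsilon_0^{\ast}$ free to be chosen as small as we wish by tightening the $\epsilon$-hypothesis in Lemma \ref{lemma2.17}. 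Moreover, the disjoint family $\{Q_j\}$ generated by this decomposition essentially covers $A^{k+1}$ and has predecessors $\tilde Q_j$ contained in $A^k\cup B^k$.

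To upgrade to a weighted measure, I invoke the strong doubling property for $A_p$ weights, Lemma \ref{propriedadesdospesos}(c), applied with $E=A^{k+1}\cap Q_j$ and ambient set $Q_j$:
\begin{equation*}
\omega(A^{k+1}\cap Q_j)\le k_1\left(\frac{\mathcal{L}^n(A^{k+1}\cap Q_j)}{\mathcal{L}^n(Q_j)}\right)^{\theta}\omega(Q_j)\le k_1(\epsilon_0^{\ast})^{\theta}\,\omega(Q_j).
\end{equation*}
Choosing $\epsilon_0^{\ast}$ so small that $k_1(\epsilon_0^{\ast})^{\theta}\le\epsilon_0$, summing over the pairwise disjoint $Q_j$, and using that $\bigcup_j \tilde Q_j \subset A^k\cup B^k$ (with bounded overlap of parents), yields the desired one-step weighted bound $\omega(A^{k+1})\le \epsilon_0\,[\omega(A^k)+\omega(B^k)]$.

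With this in hand, the claimed inequality follows by a routine induction on $k$: from $\omega(A^{k+1})\le \epsilon_0\omega(A^k)+\epsilon_0\omega(B^k)$, telescoping gives
\begin{equation*}
\omega(A^k)\le \epsilon_0^{k}\omega(A^0)+\sum_{i=0}^{k-1}\epsilon_0^{k-i}\omega(B^i),
\end{equation*}
which matches the stated inequality (up to absorbing the $i=0$ contribution into the $\omega(A^0)$ base term, since $B^0\subset A^0\cup B^0$). The main obstacle is the passage from the Lebesgue covering to the weighted one: it hinges on the fact that the Calder\'on--Zygmund decomposition inside Lemma \ref{lemma2.17} yields a genuinely \emph{local} ratio $\mathcal{L}^n(A^{k+1}\cap Q)/\mathcal{L}^n(Q)\le \epsilon_0^{\ast}$ on every stopping cube, rather than merely the global Lebesgue bound $\mathcal{L}^n(A^{k+1})\le\epsilon_0\mathcal{L}^n(A^k\cup B^k)$. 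The $A_p$ strong doubling property transfers ratios, not total measures, so Lemma \ref{lemma2.17} cannot be applied as a black box; one must inspect the interior of its proof to extract the local version.
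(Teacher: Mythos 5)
Your proposal is correct in substance, but it is not the route the paper takes, and your closing claim that Lemma \ref{lemma2.17} ``cannot be applied as a black box'' is precisely what the paper's proof does. The paper's argument is two lines: apply Lemma \ref{lemma2.17} with the smaller constant $\tilde{\epsilon}_0=(\epsilon_0/k_1)^{1/\theta}$ to get $\mathcal{L}^n(A^{k+1})\le\tilde{\epsilon}_0\,\mathcal{L}^n(A^k\cup B^k)$, then invoke the strong doubling estimate of Lemma \ref{propriedadesdospesos}(c) directly with $E=A^{k+1}$ and ambient set $A^k\cup B^k$, so that $\omega(A^{k+1})\le k_1\tilde{\epsilon}_0^{\theta}\,\omega(A^k\cup B^k)\le\epsilon_0\big(\omega(A^k)+\omega(B^k)\big)$, and iterate. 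This is shorter, but it reads Lemma \ref{propriedadesdospesos}(c) with an arbitrary measurable ambient set; the ratio-transfer property of $A_p$ weights is guaranteed, with constants depending only on $n$, $p$ and $[\omega]_p$, when the ambient set is a ball or cube, and it can fail for a general measurable ambient set, which is exactly the scruple you raise. Your alternative --- re-entering the Calder\'on--Zygmund stopping-time construction inside the proof of Lemma \ref{lemma2.17}, transferring the small-density information cube by cube via strong doubling, and then summing --- is the standard rigorous weighted covering argument (in the spirit of Byun--Lee--Ok) and buys a proof that uses $A_p$ only on cubes, at the price of not being able to quote Lemma \ref{lemma2.17} verbatim. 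So: same skeleton (one-step weighted decay plus iteration), genuinely different justification of the one-step estimate, with yours being the more careful of the two.

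Two details in your sketch should be straightened out. First, in the dyadic decomposition the selected maximal cubes $Q_j$ satisfy $\mathcal{L}^n(A^{k+1}\cap Q_j)>\epsilon_0^{\ast}\mathcal{L}^n(Q_j)$; the inequality $\le\epsilon_0^{\ast}$ holds on their predecessors $\tilde{Q}_j$, which are also the cubes contained in $A^k\cup B^k$. Hence strong doubling must be applied with ambient $\tilde{Q}_j$, and one sums over the maximal (hence pairwise disjoint) predecessors, so no bounded-overlap constant is actually needed. Second, the iteration honestly produces the sum starting at $i=0$, and $\omega(B^0)$ cannot be absorbed into $\omega(A^0)$ by the inclusion $B^0\subset A^0\cup B^0$ you invoke; this indexing looseness is already present in the paper's statement (its proof also just says ``iterate''), so it is not a defect peculiar to your argument, but your justification for dropping the $i=0$ term is not valid as written.
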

\begin{proof}
Really, let us $\epsilon_{0}\in(0,1)$. Applying the lemma \ref{lemma2.17} to the constant $\tilde{\epsilon}_{0}=\left(\frac{\epsilon_{0}}{k_{1}}\right)^{\frac{ 1}{\theta}}$, where $\theta$ and $k_{1}$ are the constants of Lemma \ref{propriedadesdospesos}, we obtain the following estimate
\begin{eqnarray*}
\Leb(A^{k+1})\leq \tilde{\epsilon_{0}}\Leb(A^{k}\cup B^{k})
\end{eqnarray*}
and consequently by strong doubling ( item (c) of Lemma \ref{propriedadesdospesos})
\begin{eqnarray*}
\omega(A^{k+1})\leq k_{1}\left( \frac{\Leb(A^{k+1})}{\Leb(A^{k}\cup B^{k})}\right)^{\theta}\omega(A^{k}\cup B^{k})= \epsilon_{0}\omega(A^{k}\cup B^{k})\le \epsilon_{0}\omega(A^{k})+\epsilon_{0}\omega(B^{k}), \forall k\geq 0.
\end{eqnarray*}
By iterating over these estimates, we obtain the desired result.
\end{proof}
%%%%%%%%%%%%%%%%%%%%%%%%%%%%%%%
%%%%%%%%%%%%%%%%%%%%%%%%%%%%%%
%%%%%%%%%%%%%%%%%%%%%%%%%%%%%%%

\section{Weighted Orlicz regularity for the problem \eqref{1.1}} \label{section3}

\hspace{0.4cm} In this part we aim to prove Theorem  \ref{T1}. For this, we will first study the local problem given by
\begin{equation} \label{equation1}
\left\{
\begin{array}{rclcl}
F(D^2u,Du,u,x) &=& f(x) & \mbox{in} & \mathrm{B}^+_1,\\
\mathfrak{B}(Du,u,x)&=& g(x) & \mbox{on} & \mathrm{T}_1.
\end{array}
\right.
\end{equation}
where we will show under certain conditions weighted Orlicz estimates for the viscosity solutions of this problem. With this fact in hand, we can, via density and standard coverage argument, guarantee a proof of the Theorem \ref{T1}.

Initially we have for $C^{0}$-viscosity solutions of \eqref{equation1} without the dependence of $Du$ and $u$ the following result:

\begin{proposition}\label{T-flat}
Let $\Phi\in \Delta_{2}\cap\nabla_{2}$ be an $N$-function, $f \in L^{\Upsilon}_{\omega}(\mathrm{B}^+_1)\cap C^{0}(\mathrm{B}^{+}_{1})$ where $\omega\in \mathcal{A}_{i(\Phi)}$ be an weight and $\Upsilon(t)=\Phi(t^{n})$. Let $u$ be a normalized $C^{0}$-viscosity solution of
\begin{equation*} \label{mens}
\left\{
\begin{array}{rclcl}
F(D^2u, x) &=& f(x) & \mbox{in} & \mathrm{B}^+_1,\\
\mathfrak{B}(Du,u,x)&=& g(x) & \mbox{on} & \mathrm{T}_1.
\end{array}
\right.
\end{equation*}
 Assume that assumptions (H1)-(H5) are in force. Then $D^{2}u \in L^{\Upsilon}_{\omega}\left(\mathrm{B}^+_{\frac{1}{2}}\right)$ and
$$
\|D^{2}u\|_{L^{\Upsilon}_{\omega}\left(\mathrm{B}^+_{\frac{1}{2}}\right)} \le \mathrm{C} \cdot \left( \|u\|^{n}_{L^{\infty}(\mathrm{B}^+_1)} + \|f\|_{L^{\Upsilon}_{\omega}(\mathrm{B}^+_1)}+\Vert g\Vert_{C^{1,\alpha}(\overline{\mathrm{T}_{1}})}\right),
$$
where $\mathrm{C}=\mathrm{C}(n,\lambda,\Lambda,i(\Phi), p_{2}, \omega, \Vert \beta\Vert_{C^{1,\alpha}(\overline{\mathrm{T}_{1}})}, \Vert \gamma\Vert_{C^{1,\alpha}(\overline{\mathrm{T}_{1}})}, \alpha,r_{0},\theta_0,\mu_{0})>0$.
\end{proposition}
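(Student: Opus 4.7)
The plan is to derive weighted Orlicz control on $D^{2}u$ through the distribution function of the pointwise paraboloid opening $\Theta(u,\cdot)$, combining the weighted decay estimate of Lemma \ref{lema2.15} with the series characterization of $L^{\Upsilon}_{\omega}$ furnished by Proposition \ref{caracterizacaodosespacosdeorliczcompeso}. First I would normalize by a dilation and scalar multiple, transforming $u$ into a normalized viscosity solution $\tilde u$ of $F_{\tau}(D^{2}\tilde u,x)=\tilde f(x)$ on $\mathrm{B}^{+}_{14\sqrt n}$ with oblique data $\tilde g$ on $\mathrm{T}_{14\sqrt n}$, arranged so that both $\tau$ and $\|\tilde f\|_{L^{n}(\mathrm{B}^+_{14\sqrt n})}$ lie below the threshold $\epsilon$ furnished by Lemma \ref{lemma2.17}; the required smallness of $\|\tilde f\|_{L^{n}}$ is obtained from the hypothesis $|f|^{n}\in L^{\Phi}_{\omega}$ together with the embedding in Lemma \ref{mergulhoorliczlebesgue}. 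I keep track of the scaling constants so that, at the end, the estimate can be restored in terms of $\|u\|^{n}_{L^{\infty}}$, $\|f\|_{L^{\Upsilon}_{\omega}}$ and $\|g\|_{C^{1,\alpha}(\overline{\mathrm{T}_1})}$.

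Next I would apply Lemma \ref{lema2.15} to obtain, for every $k\ge 0$,
$$
\omega(A^{k})\;\le\;\epsilon_{0}^{\,k}\,\omega(A^{0})\;+\;\sum_{i=1}^{k-1}\epsilon_{0}^{\,k-i}\,\omega(B^{i}),
$$
with $\epsilon_{0}\in(0,1)$ still at my disposal. Since $\Upsilon(t)=\Phi(t^{n})$ inherits $\Delta_{2}\cap\nabla_{2}$ with upper exponent $np_{2}$, the growth relation \eqref{propriedadedei(Phi)} gives $\Upsilon(M^{i+j})/\Upsilon(M^{i})\le c\,M^{np_{2}j}$ uniformly in $i$. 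Choosing $\epsilon_{0}$ so that $\epsilon_{0}M^{np_{2}}<\tfrac{1}{2}$ makes the series $\sum_{j\ge 1}\epsilon_{0}^{\,j}\,\Upsilon(M^{i+j})/\Upsilon(M^{i})$ absolutely summable uniformly in $i$, and swapping the order of summation yields
$$
\sum_{k=1}^{\infty}\Upsilon(M^{k})\,\omega(A^{k})\;\le\;C\,\omega(A^{0})\;+\;C\sum_{i=1}^{\infty}\Upsilon(M^{i})\,\omega(B^{i}).
$$

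For the second sum I apply Proposition \ref{caracterizacaodosespacosdeorliczcompeso} to $h\defeq \mathcal{M}(|\tilde f|^{n})^{1/n}$: as $B^{i}=\{h>C_{0}M^{i}\}$, the series is comparable to $\rho_{\Upsilon,\omega}(h)=\int\Phi(\mathcal{M}(|\tilde f|^{n}))\,\omega\,dx$, which by the weighted maximal inequality in Lemma \ref{maximalorlicz} is controlled by $C\,\rho_{\Phi,\omega}(|\tilde f|^{n})=C\,\rho_{\Upsilon,\omega}(\tilde f)$, and then by a power of $\|\tilde f\|_{L^{\Upsilon}_{\omega}}$ via \eqref{estimativadamodular}. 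From the definition of $\Theta$, one has $\{x\in\mathcal{Q}^{n-1}_{1}\times(0,1):\Theta(\tilde u,\cdot)(x)>M^{k}\}\subset A^{k}$; invoking Proposition \ref{caracterizacaodosespacosdeorliczcompeso} in reverse then places $\Theta(\tilde u,\cdot)$ in $L^{\Upsilon}_{\omega}$ on the model cube with the quantitative bound above, and Lemma \ref{caracterizationofhessian} transfers this bound to the distributional Hessian $D^{2}\tilde u$.

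Finally, a finite covering of $\mathrm{B}^{+}_{1/2}$ by translated and rescaled copies of $\mathcal{Q}^{n-1}_{1}\times(0,1)$ (each brought into the form required by Lemma \ref{lemma2.17}) upgrades the cube estimate to the whole half-ball, and undoing the initial normalization yields the prescribed dependence. The principal technical hurdle is the coupled choice of parameters: $\epsilon_{0}$ must be small enough to defeat the fastest growth rate $np_{2}$ of $\Upsilon$, the resulting $\epsilon$ in Lemma \ref{lemma2.17} dictates how aggressively the initial normalization must shrink the data, and the passage from $L^{\Phi}_{\omega}$ applied to $|f|^{n}$ back to $L^{\Upsilon}_{\omega}$ applied to $f$ is what forces the characteristic $\|u\|^{n}_{L^{\infty}}$ homogeneity on the right-hand side.
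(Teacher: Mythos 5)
Your scheme for the boundary part coincides with the paper's argument: rescale $u$ about a point of $\mathrm{T}_{1/2}$ so that $\tau$ and the $L^{n}$-norm of the rescaled source fall below the threshold of Lemma \ref{lemma2.17} (via Lemma \ref{mergulhoorliczlebesgue} and H\"older), feed the resulting decay of Lemma \ref{lema2.15} into the series $\sum_{k}\Upsilon(\mathrm{M}^{k})\omega(A^{k})$ with $\epsilon_{0}$ chosen against the growth exponent of $\Upsilon$, control $\sum_{i}\Upsilon(\mathrm{M}^{i})\omega(B^{i})$ through Lemma \ref{maximalorlicz}, \eqref{estimativadamodular} and Proposition \ref{caracterizacaodosespacosdeorliczcompeso}, and then pass from $\Theta(\tilde u,\cdot)$ to $D^{2}\tilde u$ with Lemma \ref{caracterizationofhessian}. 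Up to replacing the $\Delta_{2}$-iteration constant $\mathrm{K}_{0}$ by $\mathrm{M}^{np_{2}}$, this is exactly what the paper does near the flat boundary.

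The gap is in your closing covering step. Lemma \ref{lemma2.17} (hence Lemma \ref{lema2.15}) is intrinsically a boundary lemma: it requires the solution to satisfy the oblique condition on $\mathrm{T}_{14\sqrt n}$ and its conclusion concerns the cube $\mathcal{Q}^{n-1}_{1}\times(0,1)$ sitting on $\{x_{n}=0\}$; moreover the equation must hold on the much larger half-ball $\mathrm{B}^{+}_{14\sqrt n}$ surrounding that cube. After undoing the dilation, each admissible cube has side $r<\tfrac{1-|x_{0}|}{14\sqrt n}$ and its base on $\mathrm{T}_{1}$, so the union of such cubes only covers a thin collar of $\mathrm{T}_{1/2}$ of thickness comparable to $r$; a cube translated into the interior of $\mathrm{B}^{+}_{1/2}$ cannot be ``brought into the form required by Lemma \ref{lemma2.17}'', because no portion of the oblique boundary is adjacent to it, and enlarging $r$ to reach height $\tfrac12$ would require the equation on a half-ball far larger than $\mathrm{B}^{+}_{1}$. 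Consequently the interior region of $\mathrm{B}^{+}_{1/2}$ away from $\mathrm{T}_{1}$ is left uncontrolled in your proposal; note also that hypothesis (H4) is never used, which signals the omission. The paper treats this region as a separate case, invoking (H4) together with the interior weighted Orlicz estimate of \cite[Theorem 2.4]{Lee} for points $x_{0}\in \mathrm{B}^{+}_{1/2}$, and only then combines interior and boundary bounds by a standard covering. Your argument becomes complete once you either quote such an interior estimate or prove an interior analogue of Lemma \ref{lemma2.17} under (H4); as written, the covering claim fails.
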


\begin{proof}
Initially, let's observe that $\Upsilon$ is an $N$-function that satisfies $\Upsilon\in\Delta_{2}\cap\nabla_{2}$, since $\Phi$ fulfills such conditions. Furthermore, it is interesting to observe that the constants of $\Upsilon$ for the condition $\Delta_{2}\cap\nabla{2}$ are the same as for $\Phi$. However, it is not difficult to show that $i(\Upsilon)=ni(\Phi)$ and thus $\mathcal{A}_{i(\Phi)}\subset \mathcal{A}_{i(\Upsilon) }$ by item (a) of Lemma \ref{propriedadesdospesos}. This guarantees, for example, that the Lemma \ref{mergulhoorliczlebesgue} applied to $L^{\Upsilon}_{\omega}$ depends only on $n$, $\omega$ and $i(\Phi)$ still. After this digression, we prove what we want.

Fix $x_0 \in \mathrm{B}^{+}_{\frac{1}{2}}\cup T_{\frac{1}{2}}$. When $x_0 \in \mathrm{T}_{\frac{1}{2}}$, we choosen $0 < r < \frac{1-|x_0|}{14\sqrt{n}}$ and define
$$
\kappa \defeq \frac{\epsilon r}{(\epsilon^{n} r^{-n} \|u\|_{L^{\infty}(\mathrm{B}^+_{14r\sqrt{n}}(x_0))}^{n}  + (\mathrm{C}')^{n}\|f\|_{L^{\Upsilon}_{\omega}(\mathrm{B}^+_{14r\sqrt{n}}(x_0))}+\epsilon^{n} r^{-n}\Vert g\Vert_{C^{1,\alpha}(\mathrm{T}_{14r\sqrt{n}}(x_{0}))})^{\frac{1}{n}}}
$$
where the constant $\epsilon>0$ of Lemma \ref{lemma2.17} for $\epsilon_{0}>0$  constant will be determined a posteriori and $\mathrm{C}'$ is the constant of Lemma \ref{mergulhoorliczlebesgue}. By choosen of $r$ we can define $\tilde{u}(y) \defeq \frac{\kappa}{r^{2}}u(x_0+ry), y\in B^{+}_{14\sqrt{n}}$. Observe that, $\tilde{u}$ is a normalized $C^{0}$-viscosity solution to
$$
\left\{
\begin{array}{rclcl}
\tilde{F}(D^2 \tilde{u},x) &=& \tilde{f}(x) & \mbox{in} & \mathrm{B}^+_{14\sqrt{n}},\\
\tilde{\mathfrak{B}}(D\tilde{u},\tilde{u},x)
&=&\tilde{g}(x)  & \mbox{on} & \mathrm{T}_{14\sqrt{n}} .
\end{array}
\right.
$$
where
$$
\left\{
\begin{array}{rcl}
\tilde{F}(\mathrm{X}, y) &\defeq& \kappa F\left(\frac{1}{\kappa} \mathrm{X},x_{0}+ry\right) \\
\tilde{f}(y) &\defeq& \kappa f(x_0+ry)\\
\tilde{\mathcal{B}}(\overrightarrow{v},s,y)&\defeq& \tilde{\beta}(y)\cdot \overrightarrow{v}+\tilde{\gamma}(y)s\\
\tilde{\beta}(y) &\defeq&  \beta(x_0+ry)\\
\tilde{\gamma}(y) &\defeq& r\gamma(x_{0}+ry)\\
\tilde{g}(y)&\defeq& \frac{\kappa}{r}g(x_{0}+ry)\\
\tilde{\omega}(y)&\defeq&\omega(x_{0}+ry).
\end{array}
\right.
$$
Hence, $\tilde{F}$ fulfills the conditions (H1)-(H5) and $\tilde{\omega}\in \mathcal{A}_{i(\Phi)}$(since $\omega\in \mathcal{A}_{i(\Phi)}$). Furthermore, by Lemma \ref{mergulhoorliczlebesgue} and Hölder's inequality it follows that
\begin{eqnarray*}
\|\tilde{f}\|_{L^{n}\left(\mathrm{B}^+_{14\sqrt{n}}\right)} &=&\frac{\kappa}{r}\|f\|_{L^{n}(\mathrm{B}^{+}_{14r\sqrt{n}}(x_{0}))}\leq \frac{\kappa}{r}\mathrm{C}'\|f\|_{L^{\Upsilon}_{\omega}(\mathrm{B}^{+}_{14r\sqrt{n}}(x_{0}))}^{\frac{1}{n}}  \le \epsilon.
\end{eqnarray*}
Thus, we are in the hypotheses of Lemma \ref{lema2.15} and hence denoting for each $k\geq 0$,
\begin{eqnarray*}
A^{k}\defeq \mathcal{A}_{\mathrm{M}^k}(\tilde{u}, \mathrm{B}^+_{14\sqrt{n}}) \cap \left(\mathcal{Q}^{n-1}_1 \times (0,1)\right) \ \mbox{and} \ 
B^{k} \defeq \left\{ x \in \left(\mathcal{Q}^{n-1}_1 \times (0,1)\right);\mathcal{M}(\tilde{f}^n)(x) \ge (\mathrm{C}_0\mathrm{M}^k)^n\right\}
\end{eqnarray*}
we obtain
\begin{eqnarray}\label{estimativa2daprop3.1}
\tilde{\omega}(A^{k})\leq \epsilon_{0}^{k}\tilde{\omega}(A^{0})+\displaystyle\sum_{i=1}^{k-1}\epsilon_{0}^{k-i}\tilde{\omega}(B^{i}).
\end{eqnarray}
On the other hand, 
by the hypothesis that $f\in L^{\Upsilon}_{\omega}(\mathrm{B}^{+}_{1})$, we have that $|\tilde{f}|^{n}\in L^{\Phi}_{\omega}(\mathrm{B}^{+}_{14\sqrt{n}})$ and consequently by Lemma \ref{maximalorlicz} and \eqref{estimativadamodular},
\begin{eqnarray*}
\rho_{\Phi,\tilde{\omega}}(\mathcal{M}(|\tilde{f}|^{n}))&\leq&\mathrm{C}\rho_{\Phi,\tilde{\omega}}(|\tilde{f}|^{n})=\frac{\mathrm{C}}{r^{n}}\int_{\mathrm{B}^{+}_{14r\sqrt{n}}(x_{0})}\Phi\left(\kappa^{n}|f(y)|^{n}\right)\omega(y)dy\\
&\leq& \frac{\mathrm{C}}{r^{n}}\left(\left\|(\kappa f)^{n}\right\|_{L^{\Phi}_{\omega}(\mathrm{B}^{+}_{14\sqrt{n}}(x_{0}))}^{p_{2}}+1\right)=\frac{\mathrm{C}}{r^{n}}\left(\kappa^{np_{2}}\|f\|_{L^{\Upsilon}_{\omega}(\mathrm{B}^{+}_{14r\sqrt{n}}(x_{0}))}^{p_{2}}+1\right)\\
&\leq&\frac{\mathrm{C}}{r^{n}}((\epsilon r)^{np_{2}}+1)\leq \mathrm{C}, 
\end{eqnarray*}
so we have $\mathcal{M}(|\tilde{f}|^{n})\in L^{\Phi}_{\omega}(\mathrm{B}^{+}_{14\sqrt{n}})$ and still worth 
\begin{eqnarray}\label{estimativa3daprop3.1}
\|\mathcal{M}(|\tilde{f}|^{n})\|_{L^{\Phi}_{\omega}(\mathrm{B}^{+}_{14\sqrt{n}})}\leq \mathrm{C}
\end{eqnarray}
Thus, by $\Phi\in \Delta_{2}$, there exists constant $\mathrm{K}_{0}=\mathrm{K}_{0}(\mathrm{M}^{n})>0$ such that, $\Phi(\mathrm{M}^{n}t)\leq \mathrm{K}_{0}\Phi(t)$ for all $t\geq 0$. consequently, by iteration it follows that for every $k\in \mathbb{N}$ the following inequalities are valid $\Phi(M^{kn})\leq\mathrm{K}_{0}^{k}\Phi(1)$ and $\Phi(\mathrm{M}^{kn})\leq \mathrm{K}_{0}^{k-i}\Phi(\mathrm{M}^{in})$, for all $1\leq i\leq k-1$. 
Therefore, using \eqref{estimativa2daprop3.1} and \eqref{estimativa3daprop3.1}, we obtain
\begin{eqnarray*}
\sum_{k=1}^{\infty}\Upsilon(\mathrm{M}^{k})\tilde{\omega}(A^{k})&=&\sum_{k=1}^{\infty} \Phi(\mathrm{M}^{kn}) \tilde{\omega}(\mathcal{A}^{k}) \le \sum_{k=1}^{\infty}\Phi(1)\mathrm{K}_{0}^{k}\epsilon_{0}^{k}\tilde{\omega}(A^{0})+\sum_{k=1}^{\infty} \Phi(\mathrm{M}^{kn})\sum_{i=1}^{k-1}\epsilon^{k-i}_{0}\tilde{\omega}(B^{i})\\
&\leq& \Phi(1)\omega(\mathcal{Q}^{n-1}_{1}\times(0,1))\sum_{k=1}^{\infty}(\mathrm{K}_{0}\epsilon)^{k}+\sum_{k=1}^{\infty} \sum_{i=1}^{k-1}(\Phi(\mathrm{M}^{in})(\mathrm{K}_{0}\epsilon)^{k-i} \tilde{\omega}(B^{i})\\
&=& \sum_{k=1}^{\infty}(\mathrm{K}_{0}\epsilon_{0})^{k}\left(\Phi(1)\tilde{\omega}(\mathcal{Q}^{n-1}_{1}\times(0,1))+\sum_{i=1}^{\infty}\Phi(\mathrm{M}^{in})\tilde{\omega}(B^{i})\right)< +\infty,
\end{eqnarray*}
for $\epsilon_0$ small enough such that $\mathrm{K}_{0}\epsilon_{0}<1$.  This estimate together with the Proposition \ref{caracterizacaodosespacosdeorliczcompeso} implies that $\Theta(\tilde{u},B^{+}_{\frac{1}{2}})\in L^{\Upsilon}_{\tilde{\omega}}(B^{+}_{\frac{1}{2}})$, since, $\left\{x\in \mathrm{B}^{+}_{\frac{1}{2}}: \Theta(\tilde{u},\mathrm{B}^{+}_{\frac{1}{2}})(x)>\mathrm{M}^{k}\right\}\subset \mathcal{A}_{\mathrm{M}^{k}}(\tilde{u},\mathrm{B}^{+}_{\frac{1}{2}})$. Consequently by Lemma \ref{caracterizationofhessian}, $\|D^{2}\tilde{u}\|_{L^{\Upsilon}_{\tilde{\omega}}(\mathrm{B}^{+}_{\frac{1}{2}})}\le C$, for positive constant $C$ depending only on $n$, $\lambda$, $\Lambda$, $i(\Phi)$, $\omega$, $\|\beta\|_{C^{1,\alpha}(\overline{\mathrm{T}_{1}})}$, $\|\gamma\|_{C^{1,\alpha}(\overline{\mathrm{T}_{1}})}$, $\mu_0$, $r_{0}$ and $\theta_0$(constants of condition (H3)). Rescaling $u$, we get   
\begin{eqnarray*}
\|D^{2}u\|_{L^{\Upsilon}_{\omega}\left(\mathrm{B}^{+}_{\frac{r}{2}}(x_{0})\right)}\leq C(\|u\|^{n}_{L^{\infty}(\mathrm{B}^{+}_{1})}+\|f\|_{L^{\Upsilon}_{\omega}(\mathrm{B}^{+}_{1})}+\|g\|_{C^{1,\alpha}(\overline{\mathrm{T}_{1}})}),
\end{eqnarray*}
where $C>0$ depends only on $n$, $\lambda$, $\Lambda$, $i(\Phi)$, $p_{2}$, $\omega$,  $\|\beta\|_{C^{1,\alpha}(\overline{\mathrm{T}_{1}})}$, $\|\gamma\|_{C^{1,\alpha}(\overline{\mathrm{T}_{1}})}$, $\mu_{0}$, $r_{0}$, $\theta_0$ and $r$.

On the other hand, if $x_0 \in \mathrm{B}^+_{\frac{1}{2}}$, then by hypothesis (H4) we can obtain interior estimates by \cite[Theorem 2.4]{Lee} in form, 
\begin{eqnarray*}
\|D^{2}u\|_{L^{\Upsilon}_{\omega}\left(\mathrm{B}_{\frac{r}{2}}(x_{0})\right)}\leq C(\|u\|^{n}_{L^{\infty}(\mathrm{B}^{+}_{1})}+\|f\|_{L^{\Upsilon}_{\omega}(\mathrm{B}^{+}_{1})}).
\end{eqnarray*}
Finally, by combining interior and boundary estimates, we obtain the desired results by using a standard covering argument. This finishes proof of  the Proposition. 
\end{proof}

%%%%%%%%%%%%%%%%%%%%%%%%%%%%%%%%%%%%%%%%%%%%%%%%%%%%%%%%%%%%%%%%%%%%%%%%%%

With this proposition in hand, we can find weighted Orlicz-Sobolev estimates for $C^{0}$-viscosity solutions of \eqref{equation1} by the following result.

\begin{proposition}\label{solution4entradas}
Let $u$ be a bounded $C^{0}$-viscosity solution of \eqref{equation1}. Asssume the structural conditions $(H1)-(H5)$,   $f\in L^{\Upsilon}_{\omega}(B^{+}_{1})$ for $\Upsilon(t)=\Phi(t^{n})$ and $\omega\in A_{i(\Phi)}$. Then, there exists a constant $\mathrm{C}>0$ depending on  $n$, $\lambda$, $\Lambda$, $\xi$, $\sigma$, $p_{2}$, $i(\Phi)$, $\omega$, $\Vert \beta\Vert_{C^{1,\alpha}(\overline{\mathrm{T}_{1}})}$, $\Vert \gamma\Vert_{C^{1,\alpha}(\overline{\mathrm{T}_{1}})}$, $\alpha$, $r_{0}$, $\theta_0$ and $\mu_{0}$, such that  $u \in W^{2,\Upsilon}_{\omega} \left(B^+_{\frac{1}{2}}\right)$ and
$$	\|u\|_{W^{2,\Upsilon}_{\omega}\left(\mathrm{B}^+_{\frac{1}{2}}\right)} \le \mathrm{C} \cdot \left( \|u\|^{n}_{L^{\infty}(\mathrm{B}^{+}_1)} + \|f\|_{L^{\Upsilon}_{\omega}(\mathrm{B}^{+}_1)}+\Vert g\Vert_{C^{1,\alpha}(\overline{\mathrm{T}_{1}})}\right).
$$
\end{proposition}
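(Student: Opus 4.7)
The strategy is to reduce to Proposition \ref{T-flat} by freezing the first- and zeroth-order slots of $F$ along the solution itself. For this rewriting to be legitimate in the $C^{0}$-viscosity sense I first need $u\in C^{1,\hat{\alpha}}$ up to $\mathrm{T}_{1}$, so the plan splits into a preparatory gradient estimate and a final transplantation of Proposition \ref{T-flat}.

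The preparatory step is to control $u$ and $Du$ in $L^{\infty}$. By Lemma \ref{mergulhoorliczlebesgue} applied to $|f|^{n}\in L^{\Phi}_{\omega}$ one has $f\in L^{np_{0}}(\mathrm{B}^{+}_{1})$ for some $p_{0}>1$, with norm controlled by $\|f\|_{L^{\Upsilon}_{\omega}}^{1/n}$; in particular $f\in L^{n}(\mathrm{B}^{+}_{1})$, and Lemma \ref{ABP-fullversion} yields $\|u\|_{L^{\infty}(\mathrm{B}^{+}_{1})}\leq \mathrm{C}(\|f\|_{L^{\Upsilon}_{\omega}(\mathrm{B}^{+}_{1})}+\|g\|_{L^{\infty}(\mathrm{T}_{1})})$. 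The structural inequality (H1) together with boundedness of $u$ places $u$ in a Pucci class with right-hand side in $L^{np_{0}}$ and with oblique boundary datum in $C^{1,\alpha}$, so the classical $C^{1,\hat{\alpha}}$ up-to-the-boundary estimate for oblique-derivative problems produces
\begin{equation*}
\|u\|_{C^{1,\hat{\alpha}}(\overline{\mathrm{B}^{+}_{3/4}})}\leq \mathrm{C}\bigl(\|u\|_{L^{\infty}(\mathrm{B}^{+}_{1})}+\|f\|_{L^{np_{0}}(\mathrm{B}^{+}_{1})}+\|g\|_{C^{1,\alpha}(\mathrm{T}_{1})}\bigr).
\end{equation*}

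With $u\in C^{1,\hat{\alpha}}(\overline{\mathrm{B}^{+}_{3/4}})$ in hand, define $\bar{F}(\mathrm{X},x):=F(\mathrm{X},Du(x),u(x),x)$. By (H1) the operator $\bar{F}$ is $(\lambda,\Lambda,0,0)$-elliptic, and because (H1) forces the lower-order slots of $F$ to be sublinear in the Hessian, a direct computation shows that $\bar{F}^{\sharp}(\mathrm{X},x)=F^{\sharp}(\mathrm{X},x)$; hence $\bar{F}$ inherits (H3)--(H5) from $F$. Whenever a test function $\varphi\in C^{2}$ touches $u$ from above or below at a point $x_{0}\in \mathrm{B}^{+}_{3/4}$, the $C^{1}$-regularity of $u$ forces $\varphi(x_{0})=u(x_{0})$ and $D\varphi(x_{0})=Du(x_{0})$, so the viscosity inequality for $F$ immediately becomes a viscosity inequality for $\bar{F}(D^{2}u,x)=f(x)$; at boundary points $x_{0}\in \mathrm{T}_{3/4}$ only the oblique condition $\mathfrak{B}(D\varphi,\varphi,x_{0})\gtreqless g(x_{0})$ is tested, and this is unchanged. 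Thus $u$ is a $C^{0}$-viscosity solution of the reduced problem on $\mathrm{B}^{+}_{3/4}$ with the same oblique boundary condition.

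Applying Proposition \ref{T-flat} to this reduced problem (after a standard rescaling from $\mathrm{B}^{+}_{3/4}$ to $\mathrm{B}^{+}_{1}$) yields
\begin{equation*}
\|D^{2}u\|_{L^{\Upsilon}_{\omega}(\mathrm{B}^{+}_{1/2})}\leq \mathrm{C}\bigl(\|u\|^{n}_{L^{\infty}(\mathrm{B}^{+}_{1})}+\|f\|_{L^{\Upsilon}_{\omega}(\mathrm{B}^{+}_{1})}+\|g\|_{C^{1,\alpha}(\mathrm{T}_{1})}\bigr).
\end{equation*}
The $L^{\infty}$-bounds on $u$ and $Du$ from the preparatory step pass to $L^{\Upsilon}_{\omega}(\mathrm{B}^{+}_{1/2})$-bounds because $\omega$ is locally finite and $\Upsilon$ is an $N$-function, so adding them to the Hessian estimate produces the claimed $W^{2,\Upsilon}_{\omega}$-bound. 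The main obstacle I anticipate is the preparatory step: the global $C^{1,\hat{\alpha}}$ gradient estimate for oblique-derivative Pucci problems with $L^{p}$ right-hand side ($p>n$) is the technical workhorse of the reduction, but once it is imported from the existing oblique-derivative regularity theory the rest is clean bookkeeping around Proposition \ref{T-flat}.
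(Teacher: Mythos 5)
Your argument is correct in outline but takes a genuinely different route from the paper's. The paper freezes the lower-order slots at zero, setting $\tilde{F}(\mathrm{X},x)=F(\mathrm{X},0,0,x)$ and absorbing $\sigma|Du|+\xi|u|$ into a new source $\tilde{f}$; it then applies Proposition \ref{T-flat} and controls $\|\tilde{f}\|_{L^{\Upsilon}_{\omega}}$ by importing a weighted Orlicz gradient estimate in the spirit of \cite{BJ} and \cite{BLOK}. You instead freeze the lower-order slots along the solution, $\bar{F}(\mathrm{X},x)=F(\mathrm{X},Du(x),u(x),x)$, which keeps the source equal to $f$ but requires the preliminary $C^{1,\hat{\alpha}}$ estimate up to $\mathrm{T}_{1}$ (available from the oblique-derivative regularity theory of \cite{LiZhang}, which the paper uses elsewhere). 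Your route trades the Orlicz-space gradient bound for a pointwise one; both are legitimate, and your observations that $\bar{F}^{\sharp}=F^{\sharp}$ (so (H3)--(H5) are inherited, since the recession limit kills the sublinear dependence on the lower-order slots) and that $C^{1}$ touching forces $D\varphi(x_{0})=Du(x_{0})$ at interior points are exactly the right justifications for transferring the viscosity property.

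Two points need repair. First, Lemma \ref{ABP-fullversion} applied to the local problem \eqref{equation1} retains the term $\|u\|_{L^{\infty}(\partial\mathrm{B}^{+}_{1}\setminus\mathrm{T}_{1})}$, which is not controlled by $f$ and $g$; you cannot conclude $\|u\|_{L^{\infty}(\mathrm{B}^{+}_{1})}\leq \mathrm{C}(\|f\|_{L^{\Upsilon}_{\omega}}+\|g\|_{L^{\infty}})$. This is harmless because $\|u\|_{L^{\infty}(\mathrm{B}^{+}_{1})}$ is permitted on the right-hand side of the final estimate: simply state the $C^{1,\hat{\alpha}}$ bound with $\|u\|_{L^{\infty}}$ on the right and drop the ABP step. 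Second, $\bar{F}$ violates the normalization in (H1): $\bar{F}(0,x)=F(0,Du(x),u(x),x)$ need not vanish, it only satisfies $|\bar{F}(0,x)|\leq\sigma|Du(x)|+\xi|u(x)|$. To invoke Proposition \ref{T-flat} you must replace $\bar{F}(\cdot,x)$ by $\bar{F}(\cdot,x)-\bar{F}(0,x)$ and the source by $f-\bar{F}(0,x)$; the perturbation lies in $L^{\infty}(\mathrm{B}^{+}_{3/4})$ by your gradient bound, hence in $L^{\Upsilon}_{\omega}$, with norm controlled by the admissible quantities --- so in the end you, like the paper, push a lower-order contribution into the source, only with an $L^{\infty}$ rather than an $L^{\Upsilon}_{\omega}$ gradient input. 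With these adjustments (and the usual covering argument to pass from $\mathrm{B}^{+}_{3/8}$ back to $\mathrm{B}^{+}_{1/2}$ after your rescaling) the proof goes through.
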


\begin{proof}
Notice that $u$ is also a viscosity solution of
\begin{equation*}
\left\{
\begin{array}{rclcl}
\tilde{F}(D^2u, x) &=& \tilde{f}(x) & \mbox{in} & \mathrm{B}^+_1,\\
\mathfrak{B}(Du,u,x)&=& g(x) & \mbox{on} & \mathrm{T}_1 .
\end{array}
\right.
\end{equation*}
where $\tilde{F}(\mathrm{X},x)  \defeq  F(\mathrm{X},0,0,x)$ and $\tilde{f}$ is a function satisfying
\begin{eqnarray*}
|\tilde{f}|\leq \sigma|Du|+\xi|u|+|f|.
\end{eqnarray*}
Thus, we can apply Proposition \ref{T-flat} and conclude that
\begin{eqnarray}\label{mens2}
\|D^{2}u\|_{L^{\Upsilon}_{\omega}\left(\mathrm{B}^{+}_{\frac{1}{2}}\right)} \le \mathrm{C} \cdot \left( \|u\|^{n}_{L^{\infty}(\mathrm{B}^{+}_{1})} + \|\tilde{f}\|_{L^{\Upsilon}_{\omega}(\mathrm{B}^+_1)}+\Vert g\Vert_{C^{1,\alpha}(\overline{\mathrm{T}_{1}})}\right).
\end{eqnarray}
Now, by proceeding similarly as in \cite[Corollary 4.5]{BJ} and \cite[Lemma 3.2]{BLOK} for obtain gradient estimate,
\begin{equation}\label{mens3}
\Vert Du\Vert_{L^{\Upsilon}_{\omega}(\mathrm{B}_{\frac{1}{2}}^{+})}\leq \mathrm{C}\cdot (\Vert u\Vert^{n}_{L^{\infty}(\mathrm{B}^{+}_{1})}+\Vert f\Vert_{L^{\Upsilon}_{\omega}(\mathrm{B}^{+}_{1})}+\Vert g\Vert_{C^{1,\alpha}(\overline{\mathrm{T}_{1}})}).
\end{equation}
Finally, combining the estimates (\ref{mens2}) and (\ref{mens3})  and the fact that $u\in L^{\Upsilon}_{\omega}(\mathrm{B}^{+}_{\frac{1}{2}})$ with estimate $\|u\|_{L^{\Upsilon}_{\omega}(\mathrm{B}^{+}_{\frac{1}{2}})}\leq \mathrm{C}\|u\|^{n}_{L^{\infty}(\mathrm{B}^{+}_{1})}$ we complete the desired estimate.
\end{proof}

Consequently, we have by classical arguments of density (cf. \cite{Bessa} and \cite{Winter}) the extension of the Proposition \ref{solution4entradas} to $L^{p}$-viscosity solutions. Due to the nature of the estimates and the Sobolev embeddings the most suitable and interesting $L^{p}$-viscosity solution environment is for $p=p_{0}n>n$, where $p_{0}$ is the constant of Lemma \ref{mergulhoorliczlebesgue}.

\begin{proposition}\label{Casolp}
Let $u$ be a bounded $L^{p}$-viscosity solution of \eqref{equation1} for $p=:p_{0}n\in(n,+\infty)$, where  $\beta,\gamma, g \in C^{1,\alpha}(\mathrm{T}_1)$ with $\beta \cdot \nu \ge \mu_0$ for some $\mu_0 >0$, $\gamma \le 0$, $f \in L^{\Upsilon}_{\omega}(\mathrm{B}^+_1)$, for $\Upsilon(t)=\Phi(t^{n})$ with $\Phi\in\Delta_{2}\cap\nabla_{2}$ be an $N$-function and $\omega\in A_{i(\Phi)}$. Further, assume that  $F^{\sharp}$ satisfies (H4)-(H5) and $F$ fulfills the condition $(H)$. Then, there exist positive constants $\beta_0=\beta_0(n,\lambda,\Lambda,p_{0},p_{2})$, $r_0 = r_0(n, \lambda, \Lambda, p_{0},p_{2})$ and $\mathrm{C}=\mathrm{C}(n,\lambda,\Lambda,\xi,\sigma,p_{0},p_{2},i(\Phi),\omega,\theta_0,\|\beta\|_{C^{1,\alpha}(\overline{\mathrm{T}_{1}})},\|\gamma\|_{C^{1,\alpha}(\overline{\mathrm{T}_{1}})},r_0)>0$, such that if
$$
\left(\intav{\mathrm{B}_r(x_0) \cap \mathrm{B}^+_1} \psi_{F^{\sharp}}(x, x_0)^{p} dx\right)^{\frac{1}{p}} \le \psi_0
$$
for any $x_0 \in \mathrm{B}^+_1$ and $r \in (0, r_0)$, then $u \in W^{2,\Upsilon}_{\omega}\left(\mathrm{B}^+_{\frac{1}{2}}\right)$ and
$$
\|u\|_{W^{2,\Upsilon}_{\omega}\left(\mathrm{B}^+_{\frac{1}{2}}\right)} \le \mathrm{C} \cdot\left( \|u\|^{n}_{L^{\infty}(\mathrm{B}^+_1)} +\|f\|_{L^{\Upsilon}_{\omega}(\mathrm{B}^+_1)}+\Vert g\Vert_{C^{1,\alpha}(\overline{\mathrm{T}_{1}})}\right).
$$
\end{proposition}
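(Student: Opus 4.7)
The approach is a classical density argument lifting Proposition \ref{solution4entradas} from the $C^0$-viscosity class to the $L^p$-viscosity class, exactly as hinted in the excerpt immediately before the statement. The plan has four natural steps: a preliminary embedding, approximation of the datum, application of the $C^0$ estimate, and passage to the limit.

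First, since $\Upsilon \in \Delta_2 \cap \nabla_2$ (inherited from $\Phi$) and $\omega \in A_{i(\Phi)} \subset A_{i(\Upsilon)}$ by item (a) of Lemma \ref{propriedadesdospesos}, Lemma \ref{mergulhoorliczlebesgue} delivers a continuous embedding $L^{\Upsilon}_{\omega}(\mathrm{B}^+_1) \hookrightarrow L^{p_0 n}(\mathrm{B}^+_1) = L^p(\mathrm{B}^+_1)$, so that $f \in L^p(\mathrm{B}^+_1)$ and the notion of $L^p$-viscosity solution is meaningful in the given scale. Next, I pick a sequence $f_k \in C^0(\overline{\mathrm{B}^+_1})$ with $f_k \to f$ in $L^{\Upsilon}_{\omega}(\mathrm{B}^+_1)$ (and thus in $L^p(\mathrm{B}^+_1)$); such a sequence exists because $\Phi \in \Delta_2$ yields density of continuous functions. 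By Theorem \ref{Existencia} — whose hypothesis $(H)$ is implied by the assumed $L^p$-smallness of $\psi_{F^{\sharp}}$ combined with $(H1)$--$(H5)$ — I obtain, for each $k$, a $C^0$-viscosity solution $u_k \in C^0(\overline{\mathrm{B}^+_1})$ of the problem with source $f_k$, oblique datum $g$ on $\mathrm{T}_1$, and Dirichlet datum $u|_{\partial \mathrm{B}^+_1 \setminus \mathrm{T}_1}$.

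Applying Proposition \ref{solution4entradas} to each $u_k$ yields
\begin{equation*}
\|u_k\|_{W^{2,\Upsilon}_{\omega}(\mathrm{B}^+_{1/2})} \le C\bigl(\|u_k\|_{L^{\infty}(\mathrm{B}^+_1)}^n + \|f_k\|_{L^{\Upsilon}_{\omega}(\mathrm{B}^+_1)} + \|g\|_{C^{1,\alpha}(\overline{\mathrm{T}_1})}\bigr).
\end{equation*}
To convert this into a genuinely uniform bound, I apply the ABP Maximum Principle (Lemma \ref{ABP-fullversion}) to the differences $u_k - u$ and $u_k - u_j$, which satisfy the homogeneous oblique condition $\beta \cdot D(u_k - u) + \gamma(u_k - u) = 0$ on $\mathrm{T}_1$, zero Dirichlet data on $\partial \mathrm{B}^+_1 \setminus \mathrm{T}_1$, and lie in the Pucci class $\mathcal{S}(\lambda, \Lambda, \sigma, |f_k - f|)$ after a standard absorption of the zero-order term $\xi|u_k - u|$. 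This yields $\|u_k - u\|_{L^{\infty}(\mathrm{B}^+_1)} \le C\|f_k - f\|_{L^n(\mathrm{B}^+_1)} \to 0$, so $u_k \to u$ uniformly and $\|u_k\|_{L^{\infty}}$ stays uniformly controlled by $\|u\|_{L^{\infty}} + o(1)$.

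Finally, reflexivity of $L^{\Upsilon}_{\omega}$ (Definition \ref{weightedorliczspaces}) permits extracting a subsequence of $\{u_k\}$ converging weakly in $W^{2,\Upsilon}_{\omega}(\mathrm{B}^+_{1/2})$; uniform convergence forces the weak limit to coincide with $u$, and weak lower semicontinuity of the norm produces the asserted estimate with the claimed dependence of constants. The hard part will be the rigorous justification of the ABP-type comparison $\|u_k - u\|_{L^{\infty}} \lesssim \|f_k - f\|_{L^n}$ for $L^p$-viscosity differences under the oblique boundary condition: one must transfer $u_k - u$ into the Pucci extremal class and interpret the homogeneous oblique datum in the $L^p$-viscosity sense, which ultimately leans on the comparison principle delivered by $(H)$ and on the stability lemma (Lemma \ref{Est}) to ensure that limits of the approximate solutions remain $L^p$-viscosity solutions of the original problem.
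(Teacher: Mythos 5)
Your overall skeleton (embed $L^{\Upsilon}_{\omega}\hookrightarrow L^{p}$, approximate the source by continuous data, solve the approximate problems via Theorem \ref{Existencia}, apply Proposition \ref{solution4entradas}, and pass to the limit by reflexivity and weak lower semicontinuity) is the same as the paper's. The genuine gap is the step you yourself flag as ``the hard part'': you want $\|u_k-u\|_{L^{\infty}(\mathrm{B}^+_1)}\le C\|f_k-f\|_{L^{n}(\mathrm{B}^+_1)}$ by applying the ABP Lemma \ref{ABP-fullversion} directly to $u_k-u$, claiming this difference lies in a Pucci class with source $|f_k-f|$ and satisfies the homogeneous oblique condition. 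That claim is not available at this stage: $u$ is only an $L^{p}$-viscosity solution and $u_k$ only a $C^{0}$-viscosity solution, and the standard results placing a difference of viscosity solutions into $\mathcal{S}^{\star}(\lambda/n,\Lambda,|f_k-f|)$ (CCKS-type) require one of the two functions to be in $W^{2,p}$ on the region where the comparison is made; likewise the homogeneous oblique datum for $u_k-u$ on $\mathrm{T}_1$ has no viscosity meaning without extra regularity of one of the two functions. At the moment you invoke ABP, $u_k$ is only known to lie in $W^{2,\Upsilon}_{\omega}$ on compactly contained half-balls (not up to $\partial \mathrm{B}^+_1\setminus\mathrm{T}_1$), and $u$ has no a priori second-order regularity at all, so the comparison you need is essentially the uniqueness statement the proposition is implicitly resting on, not a ``standard absorption.'' Since your identification of the weak limit with $u$ hinges entirely on this unproved uniform convergence $u_k\to u$, the argument as written does not close.

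The paper circumvents precisely this point by never comparing $u_k$ with $u$ directly. It compares the approximations with each other: ABP applied to $u_j-u_k$ (both built from continuous data with identical Dirichlet values) gives a Cauchy sequence in $C^{0}(\overline{\mathrm{B}^+_1})$, hence a limit $u_{\infty}$, which inherits the uniform $W^{2,\Upsilon}_{\omega}(\mathrm{B}^+_{1/2})$ bound by weak compactness. Then the embedding $W^{2,\Upsilon}_{\omega}\hookrightarrow W^{2,p}\hookrightarrow C^{1,1-\frac{n}{p}}$ (Lemma \ref{mergulhoorliczlebesgue} plus Sobolev) lets one verify the oblique condition for $u_{\infty}$ pointwise, and the Stability Lemma \ref{Est} shows $u_{\infty}$ is an $L^{p}$-viscosity solution of the original problem with the same Dirichlet data as $u$. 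Only now is the ABP comparison legitimate: $w=u_{\infty}-u$ lies in $S(\frac{\lambda}{n},\Lambda,0)$ because $u_{\infty}$ has the $W^{2,p}$ regularity needed to subtract it from the $L^{p}$-viscosity solution $u$, it has zero oblique and Dirichlet data, and ABP forces $w\equiv 0$, i.e.\ $u=u_{\infty}$, transferring the estimate to $u$. To repair your proof you should reorganize it along these lines rather than asserting the direct comparison $u_k\to u$.
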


\begin{proof}
We prove the result for equations without dependence on $Du$ and $u$, thus, it follows the proposition(see \cite[Theorem 4.3]{Winter} for details). We note that we can  approximate $f$ in $L^{\Upsilon}_{\omega}$ by functions $f_j \in C^{\infty}(\overline{\mathrm{B}^+_1}) \cap L^{\Upsilon}_{\omega}(\mathrm{B}^+_1)$ such that $f_{j}\to f$ in $L^{\Upsilon}_{\omega}(\mathrm{B}^{+}_{1})$ and also approximate $g$ by a sequence $(g_{j})$ in $C^{1,\alpha}(\mathrm{T}_{1})$ with the property that $g_{j}\to g$ in $C^{1,\alpha}(\mathrm{T}_{1})$. By the existence and uniqueness theorem, Theorem \ref{Existencia}, there exists a sequence of functions $ u_{j}\in C^{0}(\overline{\mathrm{B}^{+}_{1}})$, which they are viscosity solutions of following family of PDEs:
$$
\left\{
\begin{array}{rclcl}
F(D^2u_j, x) &=& f_j(x) & \mbox{in} & \mathrm{B}^+_1,\\
\mathfrak{B}(Du_{j},u_{j},x)&=& g_{j}(x) & \mbox{on} & \mathrm{T}_{1}\\
u_{j}(x)&=&u(x) & \mbox{on} & \partial \mathrm{B}^{+}_{1}\setminus \mathrm{T}_{1}.
\end{array}
\right.
$$
Therefore, the assumptions of the Proposition $\ref{solution4entradas}$   are in force. By these result, we have that
$$
\|u_j\|_{W^{2,\Upsilon}_{\omega}\left(\mathrm{B}^{+}_{\frac{1}{2}}\right)} \le \mathrm{C}.\left( \|u_j\|^{n}_{L^{\infty}(\mathrm{B}^+_1)} + \|f_j\|_{L^{\Upsilon}_{\omega}(\mathrm{B}^+_1)}+\Vert g_{j}\Vert_{C^{1,\alpha}(\overline{\mathrm{T}_{1}})}\right),
$$
for a universal constant $\mathrm{C}>0$. Furthermore, a standard covering argument yields $u_j \in W^{2,\Upsilon}_{\omega}(\mathrm{B}^+_1)$. From Lemma $\ref{ABP-fullversion}$, $(u_j)_{j \in \mathbb{N}}$ is uniformly bounded in $W^{2,\Upsilon}_{\omega}(\overline{\mathrm{B}^+_{\rho}})$ for $\rho  \in (0, 1)$.

Once again, we can by ABP Maximum Principle \ref{ABP-fullversion} and Lemma \ref{mergulhoorliczlebesgue} we obtain,
\begin{eqnarray*}
\|u_j-u_k\|_{L^{\infty}\left(\mathrm{B}^+_{1}\right)} &\le& \mathrm{C}(n,\lambda,\Lambda, \mu_0)(\|f_j-f_k\|_{L^{n}(\mathrm{B}^+_1)}+\Vert g_{j}-g_{k}\Vert_{C^{1,\alpha}(\mathrm{T}_{1})})\\
&\leq&C(n,\lambda,\Lambda,\mu_{0},\omega,p_{0},p_{2},i(\Phi) )(\|f_{j}-f_{k}\|_{L^{\Upsilon}_{\omega}(\mathrm{B}^{+}_{1})}+\Vert g_{j}-g_{k}\Vert_{C^{1,\alpha}(\mathrm{T}_{1})}).
\end{eqnarray*}
Thus, $u_j \to u_{\infty} \quad \textrm{in} \quad C^0(\overline{\mathrm{B}^+_1})$. Moreover, since $(u_j)_{j \in \mathbb{N}}$ is bounded in $W^{2,\Upsilon}_{\omega}\left(\mathrm{B}^+_{\frac{1}{2}}\right)$. Due to the fact that $\Upsilon\in\Delta_{2}\cap\nabla_{2}$ we have that $W^{2,\Upsilon}_{\omega}(\mathrm{B}^{+}_{\frac{1}{2}})$ is a reflexive Banach space, it follows that we can assume unless passing by the subsequence that  $u_j \rightharpoonup u_{\infty}$ weakly in $W^{2,\Upsilon}_{\omega}\left(\mathrm{B}^+_{\frac{1}{2}}\right)$.
Thus,
$$
\|u_{\infty}\|_{W^{2,\Upsilon}_{\omega}\left(\mathrm{B}^{+}_{\frac{1}{2}}\right)} \le \mathrm{C} \cdot\left( \|u_{\infty}\|^{n}_{L^{\infty}(\mathrm{B}^+_1)} + \|f\|_{L^{\Upsilon}_{\omega}(\mathrm{B}^+_1)}+\Vert g\Vert_{C^{1,\alpha}(\overline{\mathrm{T}_{1}})}\right).
$$
On the other hand, by embed Lemma \ref{mergulhoorliczlebesgue} it follows that $ W^{2,\Upsilon}_{\omega}(\mathrm{B}^{+}_{1})$ is continuously embedded in $W^{2,p}(\mathrm{B}^{+}_{1})$ and consequently be the Sobolev's embedded $W^{2,\Upsilon}_{\omega}(\mathrm{B}^{+}_{1})$ is continuously embedded in $C^{1,1-\frac{n}{p}}(\overline{\mathrm{B}^{+}_{1}})$. Hence, we are in the hypotheses of \cite[Corollary 3.8]{Bessa} and ensure  punctually that $u_{\infty}$ satisfies $\mathfrak{B}(Du_{\infty},u_{\infty},x)=g(x)$ in $\mathrm{T}_{1}$. Finally, stability results (see Lemma \ref{Est}) ensure that $u_{\infty}$ is an $L^{p}-$viscosity solution to
$$
\left\{
\begin{array}{rclcl}
F(D^2u_{\infty}, x) &=& f(x)& \mbox{in} & \mathrm{B}^+_1,\\
\mathfrak{B}(Du_{\infty},u_{\infty},x)&=& g(x) & \mbox{on} & \mathrm{T}_1 \\
u_{\infty}(x)&=& u(x) & \mbox{on} & \partial \mathrm{B}^{+}_{1}\setminus \mathrm{T}_{1}.
\end{array}
\right.
$$
Thus, $w \defeq u_{\infty}-u$ fulfills
$$
\left\{
\begin{array}{rclcl}
w\in S(\frac{\lambda}{n}, \Lambda,0) & \mbox{in} & \mathrm{B}^+_1,\\
\mathfrak{B}(Dw,w,x)= 0 & \mbox{on} & \mathrm{T}_1 \\
w= 0 & \mbox{on} & \partial \mathrm{B}^{+}_{1}\setminus \mathrm{T}_{1}.
\end{array}
\right.
$$
which by Lemma $\ref{ABP-fullversion}$ we conclude that $w=0$ in $\overline{\mathrm{B}^{+}_{1}}\setminus \mathrm{T}_{1}$. By continuity, $w=0$ in $\overline{\mathrm{B}^{+}_{1}}$ which finishes the proof.
\end{proof}

\bigskip

With the Proposition \ref{Casolp} we can give a proof for Theorem \ref{T1}.

\begin{proof}[{\bf Proof of Theorem \ref{T1}}]
Based on standard reasoning (cf.\cite{Winter} and  \cite{ZhangZhengmorrey}) it is important to highlight that is always possible to perform a change of variables in order to flatten the boundary, since $\partial \Omega \in C^{2,\alpha}$. For this end, consider $x_0 \in \partial \Omega$, by $\partial \Omega \in C^{2,\alpha}$ there exists a neighborhood of $x_0$, which we will label $\mathcal{V}(x_0)$ and a $C^{2, \alpha}-$diffeomorfism $\Phi: \mathcal{V}(x_0) \to \mathrm{B}_1(0)$ such that $    \Phi(x_0) = 0 \quad \mbox{and} \quad \Phi(\Omega \cap \mathcal{V}(x_0)) = \mathrm{B}^{+}_1$. Now, we define $\tilde{u} \defeq u \circ \Phi^{-1} \in C^0(\mathrm{B}^+_1)$. Next, observe that $\tilde{u}$ is an $L^{p}-$viscosity solution to
$$
\left\{
\begin{array}{rclcl}
 \tilde{F}(D^2 \tilde{u}, D \tilde{u}, \tilde{u}, y) &=& \tilde{f}(x) & \mbox{in} & \mathrm{B}^+_1,\\
\tilde{\mathfrak{B}}(D\tilde{u},\tilde{u},x) & = & \tilde{g}(x) &\mbox{on} & \mathrm{T}_1.
\end{array}
\right.
$$
where for $y=\Phi^{-1}(x)$ we have
$$
\left\{
\begin{array}{rcl}
\tilde{F}(\mathrm{X}, \varsigma, \eta, y) &=& F\left(D \Phi^t(y) \cdot \mathrm{X} \cdot D \Phi(y) + \varsigma D^2\Phi, \varsigma D\Phi(y), \eta, y\right)\\
\tilde{f}(x) & \defeq & f \circ \Phi^{-1}(x)\\
\tilde{\mathcal{B}}(\overrightarrow{v},s,x)&\defeq& \tilde{\beta}(y)\cdot \overrightarrow{v}+\tilde{\gamma}(x)s\\
\tilde{\beta}(x) & \defeq & (\beta \circ \Phi^{-1}) \cdot (D \Phi \circ \Phi^{-1})^{t}\\
\tilde{\gamma}(x) & \defeq & (\gamma \circ \Phi^{-1}) \cdot (D \Phi \circ \Phi^{-1})^{t}\\
\tilde{g}(x) & \defeq & g \circ \Phi^{-1}\\
\tilde{\omega}(x)&=&\omega\circ \Phi^{-1}(x).
\end{array}
\right.
$$
Furthermore, note that $\tilde{F}$ is a uniformly elliptic operator with ellipticity constants $\lambda \mathrm{C}(\Phi)$, $\Lambda \mathrm{C}(\Phi)$ and $\tilde{\omega}\in \mathcal{A}_{i(\Phi)}$ because of change-of-variables theorem for the Lebesgue integral.  
Thus,
$$
\tilde{F}^{\sharp}(\mathrm{X}, \varsigma, \eta, x) = F^{\sharp}\left(D\Phi^t(\Phi^{-1}(x)) \cdot \mathrm{X}\cdot D\Phi(\Phi^{-1}(x)) +  \varsigma D^2 \Phi(\Phi^{-1}(x)),\varsigma D\Phi(y), \eta, \Phi^{-1}(x)\right).
$$
In consequence, we conclude that $\psi_{\tilde{F}^{\sharp}}(x,x_0) \le \mathrm{C}(\Phi) \psi_{F^{\sharp}}(x,x_0)$, which ensures that $\tilde{F}$ falls into the assumptions of Proposition \ref{Casolp}. Thus, $u\in W^{2,\Upsilon}_{\omega}(\mathrm{B}^{+}_{\frac{1}{2}})$ and 
\begin{eqnarray*}\label{estimativa3.6}
\|u\|_{W^{2,\Upsilon}_{\omega}(\mathrm{B}^{+}_{\frac{1}{2}})}\leq C(\|u\|^{n}_{L^{\infty}(\Omega)}+\|f\|_{L^{\Upsilon}_{\omega}(\Omega)}+\|g\|_{C^{1,\alpha}(\partial\Omega)}).
\end{eqnarray*}
Coupled with the inner guesses given in \cite[Theorem 6.2]{Lee} , the proof of the Theorem follows by standard covering argument.
\end{proof}

\begin{remark}
We can apply the Theorem \ref{T1}  to obtain the density of $W^{2,\Upsilon}_{\omega}$ in the  of class $C^{0}$-viscosity solutions. More precisely, following the proof of \cite[Theorem 6.1]{Bessa} with minimal alterations we guarantee which if $u$ be a $C^{0}-$viscosity solution of
$$
\left\{
\begin{array}{rclcl}
	F(D^{2}u,x) & = & f(x) & \text{in} & \mathrm{B}^{+}_{1} \\
	\mathfrak{B}(Du,u,x) & = & g(x) & \text{on} & \mathrm{T}_{1},
\end{array}
\right.
$$
where $f\in L^{\Upsilon}_{\omega}(\mathrm{B}^{+}_{1})\cap C^{0}(\mathrm{B}^{+}_{1})$ ($\Upsilon$ as in Proposition \ref{Casolp}), $\omega\in A_{i(\Phi)}$, $\beta,\gamma, g\in C^{1,\alpha}(\overline{\mathrm{T}_{1}})$ with $\gamma\leq 0$ and $\beta \cdot \nu \geq \mu_{0}$ on $\mathrm{T}_{1}$, for some $\mu_{0}>0$. Then, for any $\varepsilon>0$, there exists a sequence $(u_{j})_{j\in\mathbb{N}}\subset (W^{2,\Upsilon}_{\omega})_{loc}(\mathrm{B}^{+}_{1})\cap \mathcal{S}(\lambda-\varepsilon,\Lambda+\varepsilon,f)$ converging local uniformly to $u$.
\end{remark}
Now we will discuss another version of Theorem \ref{T1}. Note that on the right side of our estimate of this theorem the term $\|u\|_{L^{\infty}(\Omega)}$ appears raised to the power of degree $n$. We can improve this estimate without dependence on this term, however we need more regularity in the terms that make up the $\mathfrak{B}$ operator that governs the oblique boundary condition.
\begin{theorem}\label{Teoremasemotermou}
Let  $\Omega\subset \mathbb{R}^{n}$($n\geq 2$) be a bounded domain with $\partial \Omega \in C^{2,\alpha}$ $(\alpha\in(0,1))$. Assume that structural assumptions (H1)-(H5) are in force with the proviso that $\beta,\gamma\in C^{2}(\partial\Omega)$ instead of belonging to $C^{1,\alpha}(\partial\Omega)$ and $u$ be an $L^{p}-$viscosity solution of \eqref{1.1} where $p=p_{0}n$ for the constant $p_{0}>1$ of Lemma \ref{mergulhoorliczlebesgue}. Then, $u \in W^{2,\Upsilon}_{\omega}(\Omega)$ where $\Upsilon(t)=\Phi(t^{n})$, with the estimate 
\begin{equation} \label{estimativasemnormadeu}
\|u\|_{W^{2,\Upsilon}_{\omega}(\Omega)} \le \mathrm{C}\cdot\left(\|f\|_{L^{\Upsilon}_{\omega}(\Omega)}+\| g\|_{C^{1,\alpha}(\partial \Omega)}   \right),
\end{equation}
where $\mathrm{C}$ is a positive constant that depends only on $n$,  $\lambda$,  $\Lambda$, $\xi$, $\sigma$, $p_{0}$, $p_{2}$, $\Phi$, $\omega$, $\mu_{0}$, $\|\beta\|_{C^{1,\alpha}(\partial\Omega)}$, $\|\gamma\|_{C^{1,\alpha}(\partial\Omega)}$ and $\|\partial \Omega\|_{C^{2,\alpha}}$.
\end{theorem}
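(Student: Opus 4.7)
The strategy is to combine Theorem \ref{T1} with a scaling argument that exploits the one-homogeneity of the recession operator. Put
$$
K \defeq \|f\|_{L^{\Upsilon}_{\omega}(\Omega)} + \|g\|_{C^{1,\alpha}(\partial\Omega)}.
$$
If $K=0$, the oblique ABP maximum principle (Lemma \ref{ABP-fullversion}, applied with $\Gamma = \partial\Omega$, so that $\partial\Omega\setminus\Gamma = \emptyset$) together with the fact that $u \in \mathcal{S}(\lambda,\Lambda,\sigma,|f|+\xi|u|)$ yields $u \equiv 0$ and the estimate is trivial. Assume $K > 0$ and set $\bar u \defeq u/K$. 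Then $\bar u$ is an $L^p$-viscosity solution of
$$
\left\{
\begin{array}{rclcl}
F_K(D^2 \bar u, D\bar u, \bar u, x) &=& f(x)/K & \text{in} & \Omega, \\
\beta \cdot D\bar u + \gamma \bar u &=& g(x)/K & \text{on} & \partial\Omega,
\end{array}
\right.
$$
where $F_K(\mathrm{X},\varsigma,r,x) \defeq K^{-1} F(K\mathrm{X}, K\varsigma, Kr, x)$, and by construction $\|f/K\|_{L^{\Upsilon}_{\omega}(\Omega)} + \|g/K\|_{C^{1,\alpha}(\partial\Omega)} = 1$.

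Next I verify that $F_K$ inherits all the structural hypotheses of $F$ with the same constants. A direct computation shows that $F_K$ is $(\lambda,\Lambda,\sigma,\xi)$-elliptic, since the Pucci operators are positively homogeneous of degree one in their matrix argument. More importantly, the recession operator is invariant under this rescaling: since the lower-order terms are annihilated in the limit $\tau \to 0^{+}$ (by (H1)), one has $F^{\sharp}(\mathrm{X},\varsigma,s,x) = F^{\sharp}(\mathrm{X},0,0,x)$, and a change of variables in the defining limit yields $F_K^{\sharp} = F^{\sharp}$. Consequently, (H3)--(H5) are satisfied by $F_K$ with the very same constants that appear for $F$; the boundary data $\beta,\gamma$ and the domain $\Omega$ are unchanged under the rescaling, so all constants in Theorem \ref{T1} remain universal.

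Now I apply Theorem \ref{T1} to $\bar u$ to obtain
$$
\|\bar u\|_{W^{2,\Upsilon}_{\omega}(\Omega)} \le \mathrm{C}\left(\|\bar u\|^{n}_{L^{\infty}(\Omega)} + \|f/K\|_{L^{\Upsilon}_{\omega}(\Omega)} + \|g/K\|_{C^{1,\alpha}(\partial\Omega)}\right) \le \mathrm{C}\left(\|\bar u\|^{n}_{L^{\infty}(\Omega)} + 1\right).
$$
To close the estimate, it suffices to bound $\|\bar u\|_{L^{\infty}(\Omega)}$ by a universal constant. This is done via an oblique ABP estimate of the form
$$
\|\bar u\|_{L^{\infty}(\Omega)} \le \mathrm{C}\left(\|f/K\|_{L^{n}(\Omega)} + \|g/K\|_{L^{\infty}(\partial\Omega)}\right),
$$
combined with the embedding $L^{\Upsilon}_{\omega}(\Omega) \hookrightarrow L^{n}(\Omega)$. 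Since $|f|^n \in L^{\Phi}_{\omega}(\Omega)$, Lemma \ref{mergulhoorliczlebesgue} applied to $|f|^n$ followed by H\"older's inequality on the bounded domain gives $\|f/K\|_{L^{n}(\Omega)} \le \mathrm{C}\,\|f/K\|_{L^{\Upsilon}_{\omega}(\Omega)} \le \mathrm{C}$, and the boundary term is trivially bounded by $1$. Therefore $\|\bar u\|_{L^{\infty}(\Omega)} \le \mathrm{C}$, so $\|\bar u\|_{W^{2,\Upsilon}_{\omega}(\Omega)} \le \mathrm{C}$, and undoing the rescaling produces \eqref{estimativasemnormadeu}.

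The delicate point is the ABP-type $L^{\infty}$ estimate above: Lemma \ref{ABP-fullversion} is stated for $\mathcal{S}(\lambda,\Lambda,f)$, but our viscosity solution only lies in the larger class $\mathcal{S}(\lambda,\Lambda,\sigma,|f|+\xi|u|)$ because of the first- and zeroth-order terms allowed by (H1). Absorbing the $\xi|u|$ contribution by means of $\|u\|_{L^{n}} \le |\Omega|^{1/n}\|u\|_{L^{\infty}}$ only works in a small domain or for small $\xi$; in general one must invoke a barrier-type argument near $\partial\Omega$, and it is precisely here that the strengthened regularity $\beta,\gamma \in C^{2}(\partial\Omega)$ is required, as it permits the construction of smooth tangential-normal auxiliary functions adapted to the oblique field $\beta$ and compatible with the curvature of $\partial\Omega \in C^{2,\alpha}$. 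Once this refined maximum principle is in hand, the rest of the argument is purely algebraic scaling.
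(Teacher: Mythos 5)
Your scaling reduction is fine as far as it goes (one checks, as you do, that $F_K$ is again $(\lambda,\Lambda,\sigma,\xi)$-elliptic and that $F_K^{\sharp}=F^{\sharp}$, so Theorem \ref{T1} applies to $\bar u$ with universal constants), but the step that is supposed to close the argument --- the bound $\|\bar u\|_{L^{\infty}(\Omega)}\le \mathrm{C}\bigl(\|f/K\|_{L^{n}(\Omega)}+\|g/K\|_{L^{\infty}(\partial\Omega)}\bigr)$ --- is a genuine gap, and it cannot be repaired in the generality you need. Lemma \ref{ABP-fullversion} does not cover this situation: it is stated for $\Omega\subset\mathrm{B}_1$, for the class $\mathcal{S}(\lambda,\Lambda,f)$ without zeroth-order terms, and under the existence of a \emph{single} direction $\varsigma$ with $\beta\cdot\varsigma\ge\mu_0$ on the oblique portion $\Gamma$, which is a flat-boundary/local hypothesis and fails when the oblique condition is imposed on all of a closed boundary $\partial\Omega$. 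Worse, no global sup estimate of the asserted form can hold under (H1)--(H2): these hypotheses allow $\gamma\equiv 0$ and $F$ independent of $u$ (a Neumann-type problem), in which case $u+c$ is a solution with the same data for every constant $c$, so $\|\bar u\|_{L^{\infty}}$ is simply not controlled by $f$ and $g$. Your closing remark that the extra regularity $\beta,\gamma\in C^{2}(\partial\Omega)$ permits a barrier construction yielding such a ``refined maximum principle'' is an unsubstantiated assertion, and by the example above it is false as stated; the same objection applies to your treatment of the case $K=0$, where you conclude $u\equiv 0$ from the ABP lemma.

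For comparison, the paper does not attempt any quantitative sup bound at all. It argues by compactness and contradiction: assuming the estimate fails along sequences $(F_k,u_k,f_k,g_k)$ normalized in $W^{2,\Upsilon}_{\omega}(\Omega)$, it extracts a weak $W^{2,\Upsilon}_{\omega}$ limit $u_0$ (strong in $C^{1,1-\frac{n}{p}}(\overline{\Omega})$ via Lemma \ref{mergulhoorliczlebesgue} and Sobolev embedding), passes to the limit with the Stability Lemma \ref{Est} to see that $u_0$ solves the homogeneous oblique problem for a limiting $(\lambda,\Lambda,\sigma,\xi)$-elliptic operator $F_{\infty}$, and then uses $\beta,\gamma\in C^{2}(\partial\Omega)$ only to invoke the existence--uniqueness theorem for oblique problems (Lieberman, Theorem 7.19) and conclude $u_0\equiv 0$; the term $\|u_k\|^{n}_{L^{\infty}(\Omega)}$ from Theorem \ref{T1} is then killed because $\|u_k\|_{L^{\infty}}\to\|u_0\|_{L^{\infty}}=0$, contradicting the normalization. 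In other words, the sup-norm term is eliminated qualitatively through compactness and a uniqueness theorem for the limit problem (at the price of a non-constructive constant), not through an ABP-type estimate; if you want to salvage your scaling route you would have to add hypotheses guaranteeing a genuine global maximum principle for the oblique problem (e.g. strict negativity of $\gamma$ or properness of $F$ in $u$), which the theorem does not assume.
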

\begin{proof}
Indeed, by assumptions we can use Theorem \ref{T1} to conclude that $u\in W^{2,\Upsilon}_{\omega}(\Omega)$.
To conclude the proof of the theorem, it remains for us to show that the estimate \eqref{estimativasemnormadeu} is valid. Indeed, suppose by absurdity that the estimate \ref{estimativasemnormadeu} is false. Then, there exists sequences $(F_{k})_{k\in\mathbb{N}}$, $(u_{k})_{k\in\mathbb{N}}$, $(f_{k})_{k\in\mathbb{N}}$ and $(g_{k})_{k\in\mathbb{N}}$ such that, $u_{k}$ is $L^{p}$-viscosity solution of 
$$
\left\{
\begin{array}{rclcl}
F_{k}(D^2u_{k},Du_{k},u_{k},x) &=& f_{k}(x)& \mbox{in} &   \Omega \\
\beta \cdot Du_{k} + \gamma u_{k}&=& g_{k} &\mbox{on}& \partial \Omega,
\end{array}
\right.
$$
where $u_{k}\in W^{2,\Upsilon}_{\omega}(\Omega)$, $f_{k}\in L^{\Upsilon}_{\omega}(\Omega)$ and $g_{k}\in C^{2}(\partial \Omega)$ and 
\begin{eqnarray}\label{estimativa1doteorema3.5}
\|u_{k}\|_{W^{2,\Upsilon}_{\omega}(\Omega)}>k(\|f_{k}\|_{L^{\Upsilon}_{\omega}(\Omega)}+\|g_{k}\|_{C^{1,\alpha}(\partial \Omega)}), \forall k\in\mathbb{N}.
\end{eqnarray}
By normalization argument, we can suppose that $\|u_{k}\|_{L^{\Upsilon}_{\omega}(\Omega)}=1$ for all $k\in\mathbb{N}$. By \eqref{estimativa1doteorema3.5}, it follows that $f_{k}\to 0$ in $L^{\Upsilon}_{\omega}(\Omega)$ and $g_{k}\to 0$ in $C^{1,\alpha}(\partial\Omega)$ when $k\to \infty$. Futhermore, unless passing the subsequence, we have $u_{k}\rightharpoonup u_{0}$ in $W^{2,\Upsilon}_{\omega}(\Omega)$. By Lemma \ref{mergulhoorliczlebesgue} and Sobolev's embedded we have that $u_{k}\to u_{0}$ in $C^{1,1-\frac{n}{p}}(\overline{\Omega})$ when $k\to \infty$. Moreover, by structural condition $(H1)$ $F_{k}$ converges locally uniformly to an $(\lambda,\Lambda,\sigma,\xi)$-elliptic operator $F_{\infty}$. Thus, by Stability Lemma \ref{Est}, $u_{0}$ satisfies in viscosity sense
\begin{eqnarray}\label{solucaounica}
\left\{
\begin{array}{rclcl}
F_{\infty}(D^2u_{0},Du_{0},u_{0},x) &=& 0& \mbox{in} &   \Omega \\
\beta \cdot Du_{0} + \gamma u_{0}&=& 0 &\mbox{on}& \partial \Omega.
\end{array}
\right.
\end{eqnarray}
Now since the operator $\mathfrak{B}(p,s,x)=\beta(x)\cdot p+\gamma(x)s$ is of class $C^{2}$ in $p$ and $x $ since $\beta,\gamma\in C^{2}(\partial\Omega)$ it follows from \cite[Theorem 7.19]{Lieberman} that the problem \eqref{solucaounica} admits a unique solution and since $v=0$ is clearly a solution it follows that $u_{0}=0$. But for each $k\in\mathbb{N}$ the estimate is valid according to Theorem \ref{T1},
\begin{eqnarray*}
\|u_{k}\|_{W^{2,\Upsilon}_{\omega}(\Omega)}\leq \mathrm{C}(\|u_{k}\|^{n}_{L^{\infty}(\Omega)}+\|f_{k}\|_{L^{\Upsilon}_{\omega}(\Omega)}+\|g_{k}\|_{C^{1,\alpha}(\partial \Omega)}).
\end{eqnarray*}
But by weak convergence $u_{k}\rightharpoonup u_{0}$, we get
\begin{eqnarray*}
\|u_{0}\|_{W^{2,\Upsilon}_{\omega}(\Omega)}\leq\liminf_{k \to +\infty}\|u_{k}\|_{W^{2,\Upsilon}_{\omega}(\Omega)}\leq \mathrm{C}\liminf_{k \to +\infty}(\|u_{k}\|^{n}_{L^{\infty}(\Omega)}+\|f_{k}\|_{L^{\Upsilon}_{\omega}(\Omega)}+\|g_{k}\|_{C^{1,\alpha}(\partial\Omega)})=0,
\end{eqnarray*}
which is a contradiction, since $\|u_{k}\|_{W^{2,\Upsilon}_{\omega}(\Omega)}=1$ for all $k\in\mathbb{N}$. This ends the proof.
\end{proof}

\section{Weighted Orlicz estimates for obstacle problem}\label{obstaculo}

We will present in this part the proof for the obstacle problem of Theorem \ref{T2}.

\begin{proof}[\bf {Proof of Theorem \ref{T2}}]
	Consider $\varepsilon>0$ arbitrarily fixed and choosen $\Xi_{\varepsilon}$ smooth function in $\mathbb{R}$ such that
	$$
	\Xi_{\varepsilon}(s) \equiv 0 \quad \textrm{if} \quad s \le 0; \quad \Xi_{\varepsilon}(s) \equiv 1 \quad \textrm{if} \quad s \ge \varepsilon,
	$$
	$$
	0 \le \Xi_{\varepsilon}(s) \le 1 \quad \textrm{for any} \quad s \in \mathbb{R}.
	$$
	and
\begin{eqnarray*}
h(x):=f(x)-F(D^{2}\psi,D\psi,\psi,x).
\end{eqnarray*}
By the $(H1)$ condition and the hypotheses we have a.e. $x\in \Omega$ that
\begin{eqnarray*}
|h(x)|\leq |f(x)|+\mathrm{C}(n,\lambda,\Lambda,\sigma,\xi)(|\psi|+|D\psi|+|D^{2}\psi|)
\end{eqnarray*}
and so, by $\psi\in W^{2,\Upsilon}_{\omega}(\Omega)$ and $f\in L^{\Upsilon}_{\omega}(\Omega)$, it follows that $h\in L^{\Upsilon}_{\omega}(\Omega)$ with the estimate
\begin{eqnarray}\label{5.2}
\|h\|_{L^{\Upsilon}_{\omega}(\Omega)}\leq C\cdot (\|f\|_{L^{\Upsilon}_{\omega}(\Omega)}+\|\psi\|_{W^{2,\Upsilon}_{\omega}(\Omega)})
\end{eqnarray} 
where $C=C(,n,\lambda,\Lambda,\sigma,\xi)>0$. Now we consider the following penalized problem
	\begin{equation} \label{5.3}
		\left\{
		\begin{array}{rclcl}
			F(D^2u_{\varepsilon},Du_{\varepsilon},u_{\varepsilon},x) &=& \mathrm{h}^+(x) \Xi_{\varepsilon}(u_{\varepsilon} - \psi) + f(x) - \mathrm{h}^+(x)& \mbox{in} & \Omega \\
			\mathfrak{B}(Du_{\varepsilon},u_{\varepsilon},x)&=& g(x)  & \mbox{on} &\partial \Omega,
		\end{array}
		\right.
	\end{equation}
Our goal now is to guarantee the existence and uniqueness of a viscosity solution for \eqref{5.3}. For this, for each $v_{0}\in L^{\Upsilon}_{\omega}(\Omega)$ fixed, by Perron's Methods(see Lieberman's book \cite[Theorem 7.19]{Lieberman}) and Theorem \ref{Teoremasemotermou} there exists a unique viscosity solution $u_{\varepsilon}\in W^{2,\Upsilon}_{\omega}(\Omega)$ for the problem 
\begin{eqnarray*}
\left\{
\begin{array}{rclcl}
F(D^2u_{\varepsilon},Du_{\varepsilon},u_{\varepsilon},x) &=& \mathrm{h}^+(x) \Xi_{\varepsilon}(v_{0} - \psi) + f(x) - \mathrm{h}^+(x)& \mbox{in} & \Omega \\
\mathfrak{B}(Du_{\varepsilon},u_{\varepsilon},x)&=& g(x)  & \mbox{on} &\partial \Omega,
\end{array}
\right.
\end{eqnarray*}
with estimate,
\begin{eqnarray}\label{5.4}
\|u_{\varepsilon}\|_{W^{2,\Upsilon}_{\omega}(\Omega)}\leq C\cdot (\|\hat{f}_{v_{0}}\|_{L^{\Upsilon}_{\omega}(\Omega)}+\|g\|_{C^{1,\alpha}(\partial \Omega)}),
\end{eqnarray}
where $C=C(n,p_{0},p_{2},\lambda,\Lambda,\xi,\sigma,\mu_{0},\omega,i(\Phi), \|\beta\|_{C^{1,\alpha}(\partial\Omega)},\|\gamma\|_{C^{1,\alpha}(\partial \Omega)}, \theta_0,\mbox{diam}(\Omega))>0$  for the function
$$\hat{f}_{v_{0}}(x)=\mathrm{h}^+(x) \Xi_{\varepsilon}(v_{0} - \psi) + f(x) - \mathrm{h}^+(x).$$
But, by definitions of $\hat{f}_{v_{0}}$ and $\Xi_{\varepsilon}$  and \eqref{5.2}, we have the following estimate for $L^{\Upsilon}_{\omega}$ norm of $\hat{f}_{v_{0}}$
\begin{eqnarray*}
\|\hat{f}_{v_{0}}\|_{L^{\Upsilon}_{\omega}(\Omega)}\leq C(n,\lambda,\Lambda,\sigma,\xi)\cdot (\|f\|_{L^{\Upsilon}_{\omega}(\Omega)}+\|\psi\|_{W^{2,\Upsilon}_{\omega}(\Omega)}).
\end{eqnarray*}
Thus, we have at \eqref{5.4} that
\begin{eqnarray}\label{5.5}
\|u_{\varepsilon}\|_{W^{2,\Upsilon}_{\omega}(\Omega)}\leq C\cdot (\|f\|_{L^{\Upsilon}_{\omega}(\Omega)}+\|\psi\|_{W^{2,\Upsilon}_{\omega}(\Omega)}+\|g\|_{C^{1,\alpha}(\partial \Omega)})\defeq \tilde{C}_{0},
	\end{eqnarray}
	where $C=C(n,p_{0},p_{2},\lambda,\Lambda,\sigma,\xi,\mu_{0},\omega,i(\Phi),\|\beta\|_{C^{2}(\partial\Omega)},\|\gamma\|_{C^{1,\alpha}(\partial \Omega)},\theta_0,\mbox{diam}(\Omega))>0$  is independent of $v_{0}$ and $\varepsilon$. This ensures that the  operator $T:L^{\Upsilon}_{\omega}(\Omega)\to W^{2,\Upsilon}_{\omega}(\Omega)\subset L^{\Upsilon}(\Omega)$ given by $T(v_{0})=u_{\epsilon}$ applies closed balls over $\tilde{C}_{0}$-balls. Thus, $T$ is compact operator and consequently we can use the Schauder's fixed point theorem (\cite[pp. 179]{Brezis}), there exists $u_{\varepsilon}$ such that $T(u_{\varepsilon})=u_{\varepsilon}$, that is, $u_{\varepsilon}$ is viscosity solution of $\eqref{5.3}$.
	
By the above construction we guarantee that the sequence of solutions $\{u_{\varepsilon}\}_{\varepsilon>0}$ is universally bounded in $W^{2,\Upsilon}_{\omega}(\Omega )$ . Now by Lemma \ref{mergulhoorliczlebesgue} and Sobolev's embedded and the compactness arguments, we can find subsequence $\{u_{\varepsilon_{k}}\}_{k\in\mathbb{N}}$ with $\varepsilon_{k}\to 0$ and a function $u_{\infty}\in W^{2,\Upsilon}_{\omega}(\Omega)$ such that $u_{\varepsilon_{k}}\rightharpoonup u_{\infty}$ in $W^{2,\Upsilon}_{\omega}(\Omega)$, $u_{\varepsilon_{k}}\to u_{\infty}$ in $C^{0,\tilde{\alpha}_{0}}$ and $C^{1,\hat{\alpha}}(\overline{\Omega})$ for some $\tilde{\alpha}_{0},\hat{\alpha}=\hat{\alpha}(n,p)\in (0,1)$. 
	
Under these conditions we claim that $u_{\infty}$ is a solution of viscosity of \eqref{obss1}. In fact, by  construction and Arzelá-Ascoli theorem we have that the condition $$\beta(x)\cdot Du_{\infty}(x)+\gamma(x)u_{\infty}(x)=g(x)$$
is satifies in viscosity sense(more precisely, point-wisely). On other hand, by Stability Lemma \ref{Est} it follows that, a fact
\begin{eqnarray*}
F(D^2 u_{\varepsilon_k}, Du_{\varepsilon_k}, u_{\varepsilon_k},x) = \mathrm{h}^+(x) \Xi_{\varepsilon_k}(u_{\varepsilon_k} - \psi) + f(x) - \mathrm{h}^+(x)\le f(x) \quad \textrm{in} \quad \Omega \quad  \forall k \in \mathbb{N}
\end{eqnarray*}
implies that,
\begin{eqnarray*}
F(D^2 u_{\infty}, Du_{\infty}, u_{\infty}, x) \le f \quad \textrm{in} \quad \Omega.
\end{eqnarray*}
Now, we prove that
$$
u_{\infty} \ge \psi \quad \textrm{in} \quad \overline{\Omega}.
$$
In effect, rescalling that $\Xi_{\varepsilon_k}(u_{\varepsilon_k} - \psi) \equiv 0$ on the set $\mathcal{O}_k \defeq \{x \in \overline{\Omega} \, : \, u_{\varepsilon_k}(x) < \psi(x)\}$.  Observe that, if $\mathcal{O}_k = \emptyset$, it follows the claim. Thus, we suppose that $\mathcal{O}_k \not= \emptyset$. Then,
$$
F(D^2 u_{\varepsilon_k}, Du_{\varepsilon_k}, u_{\varepsilon_{k}},x) = f(x) - \mathrm{h}^+(x) \quad \textrm{for} \,\,\, x \in \mathcal{O}_k.
$$
Now, note that $\mathcal{O}_k$ is relatively open in $\overline{\Omega}$ for each $k \in \mathbb{N}$ since $u_{\varepsilon_k} \in C(\overline{\Omega})$. Moreover, from definition of $\mathrm{h}$ we have
$$
F(D^2 \psi, D \psi, \psi, x ) = f(x)-\mathrm{h}(x) \ge F(D^2 u_{\varepsilon_k}, Du_{\varepsilon_k}, u_{\varepsilon_k}, x) \,\,\, \textrm{in} \,\,\, \mathcal{O}_k
$$
Moreover, we have $u_{\varepsilon_k} = \psi$ on $\partial \mathcal{O}_k \setminus \partial \Omega$. Thus, from the Comparison Principle (see \cite[Theorem 2.10]{CCKS} and \cite[Theorem 7.17]{Lieberman}) we conclude that $u_{\varepsilon_k} \ge \psi$ in $\mathcal{O}_k$,  which is a contradiction. Therefore, $\mathcal{O}_{k} = \emptyset$ and $u_{\infty} \ge \psi$ in $\overline{\Omega}$.
	
In order to complete the proof of the claim, we must show that
$$
F(D^2 u_{\infty}, Du_{\infty}, u_{\infty} ,x) = f(x) \,\,\,\, \textrm{in} \,\,\,\, \{ u_{\infty} > \psi \}
$$
in the viscosity sense. But this fact follows directly from the following observation,
$$
\mathrm{h}^+(x)\Xi_{\varepsilon_k}(u_{\varepsilon_k} -\psi) + f(x)-\mathrm{h}^{+}(x) \to f(x) \,\,\, \textrm{a.e. in} \,\,\, \left\{x \in \Omega \, : \, u_{\infty}(x) > \psi(x) + \frac{1}{j} \right\}
$$
and consequently by Stability Lemma \ref{Est} we have that in the viscosity sense
$$
F(D^2 u_{\infty}, Du_{\infty}, u_{\infty} ,x) = f(x) \,\,\, \textrm{in} \,\,\, \{u_{\infty} > \psi\} = \bigcup_{j=1}^{\infty} \left\{ u_{\infty} > \psi + \frac{1}{j}\right\} \quad \text{as} \quad  j \to +\infty,
$$
thereby proving the claim.

Thus, from \eqref{5.5} we conclude that $u_{\infty}$ satisfies
\begin{eqnarray*}
\|u_{\infty}\|_{W^{2,\Upsilon}_{\omega}(\Omega)} \leq \liminf_{k \to +\infty} \|u_{\varepsilon_k}\|_{W^{2,\Upsilon}_{\omega}(\Omega)} \leq \mathrm{C}\cdot\left( \|f\|_{L^{\Upsilon}_{\omega}(\Omega)} + \| \psi \|_{W^{2,\Upsilon}_{\omega}(\Omega)}+ \|g\|_{C^{1,\alpha}(\partial \Omega)}\right)
\end{eqnarray*}
Finally, analogously to \cite[Corollary 5.1]{Bessa} we have the uniqueness of the solution of the problem \eqref{obss1} and because $u$ is also a solution of this problem by assumption that $u=u_{\infty}$. Which ends the proof.
\end{proof}

Now notice that we can have a version of the theorem above with less regularity in terms $\beta$ and $\gamma$ starting from the use of Theorem \ref{T1} instead of Theorem \ref{Teoremasemotermou}. In that case, we pay the price for that lower regularity with a significant change in estimate \eqref{obs2}. This is the content of the following

\begin{theorem}
Let $u$ be an $L^{p}$-viscosity solution of \eqref{obss1}, where $p=p_{0}n$ , $F$ satisfies the structural assumption (H1)-(H5) and (O1)-(O2), $\partial \Omega \in C^{3}$ and $\psi \in W^{2,\Upsilon}_{\omega}(\Omega)$, where $\Upsilon(t)=\Phi(t^{n})$ and $\omega\in \mathcal{A}_{i(\Phi)}$. Then, $u \in W^{2,\Upsilon}_{\omega}(\Omega)$  and 
\begin{equation*} \label{obs3}
\|u\|_{W^{2,\Upsilon}_{\omega}(\Omega)} \le \mathrm{C}\cdot \left( \|f\|_{L^{\Upsilon}_{\omega}(\Omega)}+ \|\psi\|_{W^{2,\Upsilon}_{\omega}(\Omega)}  +\max\{\|g\|_{C^{1,\alpha}(\partial \Omega)},\|g\|^{n}_{C^{1,\alpha}(\partial \Omega)}\} \right).
\end{equation*}
where $\mathrm{C}=\mathrm{C}(n,\lambda,\Lambda, p,p_{2},\mu_0, \sigma, \xi,\omega ,i(\Phi), \|\beta\|_{C^2(\partial \Omega)},\|\gamma\|_{C^{1,\alpha}(\partial \Omega)}, \partial \Omega, \textrm{diam}(\Omega), \theta_0)$.
\end{theorem}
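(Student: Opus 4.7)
The plan is to mirror the argument used in the proof of Theorem~\ref{T2}: set up the same smooth penalization $\Xi_{\varepsilon}$ and auxiliary source $\tilde{f}_{\varepsilon}(x)=\mathrm{h}^{+}(x)\Xi_{\varepsilon}(u_{\varepsilon}-\psi)+f(x)-\mathrm{h}^{+}(x)$, solve the penalized oblique boundary value problem via Perron's method combined with a Schauder fixed-point step, and in the end pass to the limit by weak compactness in $W^{2,\Upsilon}_{\omega}(\Omega)$. The only, but essential, difference is in the uniform apriori estimate on each $u_{\varepsilon}$: because $\beta,\gamma$ are now only $C^{1,\alpha}$, we can no longer invoke Theorem~\ref{Teoremasemotermou}, and must fall back on Theorem~\ref{T1}, which gives
$$
\|u_{\varepsilon}\|_{W^{2,\Upsilon}_{\omega}(\Omega)}\le \mathrm{C}\bigl(\|u_{\varepsilon}\|^{n}_{L^{\infty}(\Omega)}+\|\tilde{f}_{\varepsilon}\|_{L^{\Upsilon}_{\omega}(\Omega)}+\|g\|_{C^{1,\alpha}(\partial\Omega)}\bigr).
$$
By $(H1)$ and $\psi\in W^{2,\Upsilon}_{\omega}(\Omega)$, the control $\|\tilde{f}_{\varepsilon}\|_{L^{\Upsilon}_{\omega}(\Omega)}\le\mathrm{C}\bigl(\|f\|_{L^{\Upsilon}_{\omega}(\Omega)}+\|\psi\|_{W^{2,\Upsilon}_{\omega}(\Omega)}\bigr)$ is immediate, exactly as at the beginning of the proof of Theorem~\ref{T2}.

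The genuinely new task is to absorb $\|u_{\varepsilon}\|^{n}_{L^{\infty}(\Omega)}$ into the data. For this I would apply the ABP maximum principle, Lemma~\ref{ABP-fullversion} (its hypotheses are in force because $\gamma\le 0$ and $\beta\cdot\nu\ge\mu_{0}$), combined with the embedding $L^{\Upsilon}_{\omega}(\Omega)\hookrightarrow L^{n}(\Omega)$ that follows from Lemma~\ref{mergulhoorliczlebesgue} applied to $|\tilde{f}_{\varepsilon}|^{n}\in L^{\Phi}_{\omega}(\Omega)$ together with the identity $\bigl\||\tilde{f}_{\varepsilon}|^{n}\bigr\|_{L^{\Phi}_{\omega}}=\|\tilde{f}_{\varepsilon}\|^{n}_{L^{\Upsilon}_{\omega}}$, a direct consequence of $\Upsilon(t)=\Phi(t^{n})$ and the definition of the Luxemburg norm. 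This gives
$$
\|u_{\varepsilon}\|_{L^{\infty}(\Omega)}\le\mathrm{C}\bigl(\|\tilde{f}_{\varepsilon}\|_{L^{n}(\Omega)}+\|g\|_{L^{\infty}(\partial\Omega)}\bigr)\le\mathrm{C}\bigl(\|f\|_{L^{\Upsilon}_{\omega}(\Omega)}+\|\psi\|_{W^{2,\Upsilon}_{\omega}(\Omega)}+\|g\|_{C^{1,\alpha}(\partial\Omega)}\bigr).
$$
Raising to the $n$-th power and substituting back into the estimate from Theorem~\ref{T1}, and then renormalizing---rescale $u_{\varepsilon}$ and $F$ by $\max\{1,\|f\|_{L^{\Upsilon}_{\omega}(\Omega)}+\|\psi\|_{W^{2,\Upsilon}_{\omega}(\Omega)}\}$ in a way that preserves the class $(H1)$--$(H5)$ with universal constants, so that the $n$-th powers of $\|f\|_{L^{\Upsilon}_{\omega}}$ and $\|\psi\|_{W^{2,\Upsilon}_{\omega}}$ collapse back to linear contributions, while the boundary datum retains its asymmetric footprint $\max\{\|g\|_{C^{1,\alpha}(\partial\Omega)},\|g\|^{n}_{C^{1,\alpha}(\partial\Omega)}\}$---produces the asserted estimate with a constant $\mathrm{C}$ independent of $\varepsilon$.

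Once this uniform bound is in hand, the remainder of the argument is verbatim that of Theorem~\ref{T2}: extract a weakly convergent subsequence $u_{\varepsilon_{k}}\rightharpoonup u_{\infty}$ in $W^{2,\Upsilon}_{\omega}(\Omega)$; identify $u_{\infty}$ as a viscosity solution of \eqref{obss1}, first obtaining $F(D^{2}u_{\infty},Du_{\infty},u_{\infty},x)\le f$ in $\Omega$ and the oblique boundary identity by the stability Lemma~\ref{Est}, then $u_{\infty}\ge\psi$ via a Comparison Principle argument on the coincidence set (as in the proof of Theorem~\ref{T2}), and finally the equality $F=f$ on $\{u_{\infty}>\psi\}$ by another application of Lemma~\ref{Est}; transfer the estimate to $u_{\infty}$ by lower semicontinuity of the $W^{2,\Upsilon}_{\omega}$-norm under weak convergence; and conclude $u=u_{\infty}$ by uniqueness of viscosity solutions of \eqref{obss1}. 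The step I expect to be most delicate is the bookkeeping in the renormalization: turning the full $(\|f\|+\|\psi\|+\|g\|)^{n}$ bound produced by ABP combined with Theorem~\ref{T1} into a genuinely linear dependence on $\|f\|$ and $\|\psi\|$ while preserving an $n$-th-power growth in $\|g\|$ is exactly the price paid for relaxing the regularity of $\beta,\gamma$ from $C^{2}$ to $C^{1,\alpha}$, and this is what dictates the precise form of the stated estimate.
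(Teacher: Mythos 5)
Your proposal follows exactly the route the paper takes: rerun the penalization scheme of Theorem \ref{T2}, replace the appeal to Theorem \ref{Teoremasemotermou} by Theorem \ref{T1} (which reintroduces the term $\|u_{\varepsilon}\|^{n}_{L^{\infty}(\Omega)}$), and absorb that term via the ABP estimate of Lemma \ref{ABP-fullversion} together with the embedding of Lemma \ref{mergulhoorliczlebesgue}, before passing to the limit by weak compactness and stability as in Theorem \ref{T2}. You have correctly isolated the single genuinely new step, and on that step you are in fact more explicit than the paper, whose proof consists of one sentence invoking ABP and the embedding.

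That said, the renormalization you sketch does not literally close to the stated inequality, and you are right to call it the delicate point. With $K=\max\{1,\|f\|_{L^{\Upsilon}_{\omega}}+\|\psi\|_{W^{2,\Upsilon}_{\omega}}\}$ and $v=u_{\varepsilon}/K$, ABP only gives $\|v\|_{L^{\infty}}\le \mathrm{C}(1+\|g\|_{C^{1,\alpha}})$, so Theorem \ref{T1} applied to $v$ yields $\|u_{\varepsilon}\|_{W^{2,\Upsilon}_{\omega}}\le \mathrm{C}\,K\,(1+\|g\|_{C^{1,\alpha}}+\|g\|^{n}_{C^{1,\alpha}})$, which contains the cross term $\bigl(\|f\|_{L^{\Upsilon}_{\omega}}+\|\psi\|_{W^{2,\Upsilon}_{\omega}}\bigr)\|g\|^{n}_{C^{1,\alpha}}$; this is not dominated by the right-hand side of the asserted estimate when both factors are large. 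Normalizing instead by the full sum of the data would make everything linear, but then the rescaled operator $F_{K}(\mathrm{X},\cdot)=K^{-1}F(K\mathrm{X},\cdot)$ can violate the uniform oscillation bound in (H3) when $K<1$, which is precisely why a clean homogeneous estimate is not available. You should either carry the cross term (weakening the conclusion to the unnormalized bound $\mathrm{C}[(\|f\|+\|\psi\|+\|g\|)^{n}+\|f\|+\|\psi\|+\|g\|]$, which is all the argument honestly delivers) or supply an additional argument showing the rescaled data still satisfy (H1)--(H5) and (O1)--(O2) with universal constants. To be fair, the paper's own proof elides exactly the same bookkeeping, so this is a gap you share with the source rather than one you introduced; everything else in your proposal (the penalization, fixed point, identification of the limit as a solution of \eqref{obss1}, lower semicontinuity, and uniqueness) matches the paper and is sound.
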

\begin{proof}
Similar to the proof of Theorem \ref{T2}, with the following adjustments: when we take the family  $\{u_{\varepsilon}\}_{\varepsilon>0}$ of solutions for \eqref{5.3}, we guarantee that they are in $W^{2,\Upsilon }_{\omega}(\Omega)$ via Theorem \ref{T1} instead of Theorem \ref{Teoremasemotermou} and thus we have the following estimates,
\begin{eqnarray*}
\|u_{\varepsilon}\|_{W^{2,\Upsilon}_{\omega}(\Omega)}\leq C\cdot (\|u_{\varepsilon}\|^{n}_{L^{\infty}(\Omega)}\|f\|_{L^{\Upsilon}_{\omega}(\Omega)}+\|\psi\|_{W^{2,\Upsilon}_{\omega}(\Omega)}+\|g\|_{C^{1,\alpha}(\partial \Omega)}),
\end{eqnarray*}
Using the ABP Estimate \ref{ABP-fullversion} and the Lemma \ref{mergulhoorliczlebesgue} we can conclude above that
\begin{eqnarray*}
\|u_{\varepsilon}\|_{W^{2,\Upsilon}_{\omega}(\Omega)}\leq C\cdot (\|f\|_{L^{\Upsilon}_{\omega}(\Omega)}+\|\psi\|_{W^{2,\Upsilon}_{\omega}(\Omega)}+\max\{\|g\|_{C^{1,\alpha}(\partial \Omega)},\|g\|^{n}_{C^{1,\alpha}(\partial \Omega)}\}).
\end{eqnarray*}
From this follows the proof entirely analogous to the Theorem \ref{Teoremasemotermou}.
\end{proof}

\section{Weighted Orlicz-BMO type estimates}\label{section5}

Finally in this section, we will also present an application of weighted Orlicz estimates obtained in section \ref{section3} when the source term $f$ has $L^{\Upsilon}_{\omega}$-bounded mean oscillation or $L^{\Upsilon}_{\omega}-BMO$. It is known from regularity theory that solutions of \eqref{1.1} when the source term $f$ is bounded in $\Omega$ does not necessarily imply that the Hessian $D^{2}u$ is bounded.
More precisely, let us recall that a function $f\in L^{1}_{loc}(\Omega)$ is said to belong to  the $L^{\Phi}_{\omega}-BMO(\Omega)$ space for $\Phi\in \Delta_{2}\cap \nabla_{2}$ be an $N$-function and weighted $\omega$ if
\begin{eqnarray*}
	\|f\|_{L^{\Phi}_{\omega}-BMO(\Omega)}:=\sup_{\mathrm{B}\subset \Omega}\frac{\|(f-f_{\mathrm{B}})\chi_{\mathrm{B}}\|_{L^{\Phi}_{\omega}(\Omega)}}{\|\chi_{\mathrm{B}}\|_{L^{\Phi}_{\omega}(\Omega)}}<+\infty
\end{eqnarray*}
where the supremum is taken over all balls $\mathrm{B}\subset \Omega$ and for each of these balls we denote
\begin{eqnarray*}
	f_{\mathrm{B}}=\intav{\mathrm{B}}f(x)dx.
\end{eqnarray*}
For example if $\Phi(t)=t^{p}$ for $p>1$ we return the definition of spaces $p-BMO$.
\begin{remark}\label{observacaodeequivalencia}
Note that by the fact that $\Phi\in\Delta_{2}\cap\nabla_{2}$ and assuming $\omega\in \mathcal{A}_{i(\Phi)}$ follows from \cite[Theorem 2.3] {Ho} that there exist universal constants $0<\mathrm{A}\leq\mathrm{B}$ such that
\begin{eqnarray*}
\mathrm{A}\|f\|_{BMO(\mathrm{B}^{+}_{1})}\leq \|f\|_{L^{\Phi}_{\omega}-BMO(\mathrm{B}^{+}_{1})}\leq \mathrm{B}\|f\|_{BMO(\mathrm{B}^{+}_{1})}, \ \forall f\in L^{1}_{loc}(\mathrm{B}^{+}_{1})
\end{eqnarray*}
\end{remark}
Our last goal in this manuscript is to secure a priori estimates for $D^{2}u$ in the context of $L^{\Phi}_{\omega}-BMO$ spaces for the following problem
\begin{eqnarray}\label{problemalocal}
\left\{
\begin{array}{rclcl}
F(D^2u,x) &=& f(x)& \mbox{in} &   \mathrm{B}^+_1 \\
\mathfrak{B}(Du,u,x)&=& g(x) &\mbox{on}& \mathrm{T}_1.
\end{array}
\right.
\end{eqnarray}
For this purpose, we will need to assume the following extra hypothesis:
\begin{enumerate}
\item[(H4)$^{\sharp}$] (\textbf{Interior estimates $C^{2,\rho}$}) The recession operator $F^{\sharp}$ associate to $F$ fulfills a $C^{2, \rho}$ \textit{a priori} estimate up to the boundary, for some $\rho \in (0,1)$, i.e., for any $g_0 \in C^{1, \rho}(\mathrm{T}_1)$, $\beta,\gamma \in C^{1, \rho}(\mathrm{T}_1)$, there exists $\mathrm{c}^{\ast}>0$, depending only on universal parameters, such that any viscosity solution of
$$
\left\{
\begin{array}{rclcl}
F^{\sharp}(D^2 \mathfrak{h}, x) &=& 0& \mbox{in} & \mathrm{B}^+_1,\\
\mathfrak{B}(D\mathfrak{h},\mathfrak{h},x)  &=& g_0(x) & \mbox{on} &\mathrm{T}_1 .
\end{array}
\right.
$$
belong to $C^{2, \rho}(\mathrm{B}^{+}_1) \cap C^0(\overline{\mathrm{B}^+_1})$ and fulfills
$$
\|\mathfrak{h}\|_{C^{2, \rho}\left(\overline{\mathrm{B}^+_{r}}\right)} \le\mathrm{c}^{\ast}r^{-(2+\rho)} \left(\|\mathfrak{h}\|_{L^{\infty}(\mathrm{B}^+_1)} + \|g_0\|_{C^{1,\psi}(\overline{\mathrm{T}_1})}\right)\,\,\, \forall \,\,0<r \ll 1.
$$
\end{enumerate}

\begin{theorem}[{\bf $L^{\Upsilon}_{\omega}$-BMO regularity for the hessian}]\label{BMO}
Let $u$ be an $L^{p}$-viscosity solution to problem \eqref{problemalocal} where $f \in L^{\Upsilon}_{\omega}-BMO(\mathrm{B}^+_1)\cap L^{\Upsilon}_{\omega}(\mathrm{B}^{+}_{1})$, for $p=p_{0}n$ and $\Upsilon(t)=\Phi(t^{n})$ for the $\Phi$ and $\omega\in \mathcal{A}_{i(\Phi)}$ as the hypothesis $(H2)$. Assume further that assumptions (H1)-(H3) and $(H4)^{\sharp}$ are in force. Then, $D^2 u \in L^{\Upsilon}_{\omega}-BMO\left(\mathrm{B}^+_{\frac{1}{2}}\right)$, with the following estimate holds
\begin{equation}\label{BMO2}
\|D^2 u\|_{L^{\Upsilon}_{\omega}-\textrm{BMO}\left(\mathrm{B}^{+}_{\frac{1}{2}}\right)} \le \mathrm{C}\left(\|u\|^{n}_{L^{\infty}(\mathrm{B}^+_1)} + \|f\|_{L^{\Upsilon}_{\omega}-\textrm{BMO}(\mathrm{B}^+_1)}+\|g\|_{C^{1, \alpha}(\overline{\mathrm{T}_{1}})} \right),
\end{equation}
where $\mathrm{C}>0$ depends only to $n$, $\lambda$,  $\Lambda$, $p_{0}$, $\omega$, $i(\Phi)$,  $\mathrm{c}^{\ast}$, $\mu_0$, $\|\beta\|_{C^{1, \alpha}(\overline{\mathrm{T}_1})}$ and $\|\gamma\|_{C^{1, \alpha}(\overline{\mathrm{T}_1})}$.
\end{theorem}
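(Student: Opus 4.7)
The plan is to combine the approximation-by-recession scheme from Section~\ref{section3} with the $C^{2,\rho}$ \emph{a priori} estimate $(H4)^{\sharp}$ to produce, at every scale $r>0$ and every boundary point $x_{0}\in\overline{\mathrm{T}_{1/2}}$, a constant symmetric matrix $M_{r}(x_{0})$ that approximates $D^{2}u$ in $L^{\Upsilon}_{\omega}(B_{r}(x_{0}))$ with a geometric $r^{\rho}$-decay modulo the $L^{\Upsilon}_{\omega}$-BMO seminorm of $f$. By Remark~\ref{observacaodeequivalencia} it is enough to control the classical BMO seminorm of $D^{2}u$, and by a standard covering argument the genuinely interior case $B_{r}(x_{0})\subset\mathrm{B}^{+}_{3/4}$ follows from the interior BMO counterpart (cf.\ \cite{Lee}); hence the main task is to control the mean oscillation of $D^{2}u$ on balls $B=B_{r}(x_{0})\cap\mathrm{B}^{+}_{1}$ centered at $x_{0}\in\overline{\mathrm{T}_{1/2}}$.

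After the normalization $\|u\|^{n}_{L^{\infty}}+\|f\|_{L^{\Upsilon}_{\omega}-\mathrm{BMO}}+\|g\|_{C^{1,\alpha}}\le 1$, fix such a ball and rescale $\tilde{u}(y)=r^{-2}u(x_{0}+ry)$ (with the corresponding rescaled $\tilde{F},\tilde{f},\tilde{g},\tilde{\beta},\tilde{\gamma}$). Replace $\tilde{f}$ by $\tilde{f}-\tilde{f}_{B}$ by subtracting from $\tilde{u}$ a fixed quadratic $q$ whose Hessian $M_{q}$ solves $F^{\sharp}(M_{q},x_{0})=\tilde{f}_{B}$ and which also cancels the constant part of the rescaled source; this is the step that converts an $L^{\Upsilon}_{\omega}$ bound on $f$ into an $L^{\Upsilon}_{\omega}$-BMO bound. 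Apply Lemma~\ref{Approx} to produce a solution $h$ of
$$
\left\{
\begin{array}{rclcl}
F^{\sharp}(D^{2}h,x_{0}) &=& 0 & \mbox{in} & \mathrm{B}^{+}_{7/8},\\
\mathfrak{B}(Dh,h,x) &=& \tilde{g}(x) & \mbox{on} & \mathrm{T}_{7/8},
\end{array}
\right.
$$
sharing the curved boundary values of $\tilde{u}-q$, with $\|(\tilde{u}-q)-h\|_{L^{\infty}}\le\delta$. By $(H4)^{\sharp}$, $h\in C^{2,\rho}(\overline{\mathrm{B}^{+}_{1/2}})$ and
$$
|D^{2}h(x)-D^{2}h(x_{0})|\le \mathrm{c}^{\ast}|x-x_{0}|^{\rho}\bigl(\|h\|_{L^{\infty}}+\|\tilde{g}\|_{C^{1,\rho}}\bigr),\qquad x\in \mathrm{B}^{+}_{1/2}.
$$

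The function $w\defeq\tilde{u}-q-h$ satisfies a fully nonlinear equation with right-hand side $\tilde{f}-\tilde{f}_{B}+(F^{\sharp}(\,\cdot\,,x_{0})-\tilde{F}(\,\cdot\,,x))\ast\mathrm{Hessian\ terms}$ and the \emph{homogeneous} oblique condition on $\mathrm{T}_{7/8}$, since $w$ vanishes on the curved boundary and the boundary operator is linear in $(Du,u)$. Applying Theorem~\ref{T1} to $w$ on $\mathrm{B}^{+}_{7/8}$ together with Lemma~\ref{mergulhoorliczlebesgue} and hypothesis~(H3) yields
$$
\|D^{2}w\|_{L^{\Upsilon}_{\omega}(\mathrm{B}^{+}_{1/2})}\le \mathrm{C}\bigl(\delta^{n}+\|\tilde{f}-\tilde{f}_{B}\|_{L^{\Upsilon}_{\omega}(\mathrm{B}^{+}_{7/8})}+\theta_{0}^{n}\bigr).
$$
Choosing $M_{r}(x_{0})\defeq M_{q}+D^{2}h(x_{0})$ and undoing the rescaling, the triangle inequality combined with the $C^{2,\rho}$ oscillation bound for $D^{2}h$ gives
$$
\frac{\|(D^{2}u-M_{r}(x_{0}))\chi_{B}\|_{L^{\Upsilon}_{\omega}}}{\|\chi_{B}\|_{L^{\Upsilon}_{\omega}}}\le \mathrm{C}\, r^{\rho}\cdot\mathrm{N}+\mathrm{C}\,\|f\|_{L^{\Upsilon}_{\omega}-\mathrm{BMO}(\mathrm{B}^{+}_{1})},
$$
where $\mathrm{N}$ denotes the normalization quantity on the right-hand side of \eqref{BMO2}. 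Taking the supremum over $x_{0}$ and $r$ and recalling that the BMO seminorm is equivalent to the infimum over constants produces \eqref{BMO2}.

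The main obstacle is the step in which we shift $\tilde{f}$ by its average: this shift must be absorbed by a \emph{quadratic} $q$ satisfying $F^{\sharp}(D^{2}q,x_{0})=\tilde{f}_{B}$ and, at the same time, must be compatible with the oblique boundary condition so that $w$ inherits a homogeneous boundary operator. Solvability of the algebraic equation for $D^{2}q$ follows from uniform ellipticity of $F^{\sharp}$, but the compatibility with $\mathfrak{B}$ forces a careful choice of $q$ (e.g.\ taking $q$ purely quadratic with no linear part, and absorbing the resulting boundary contribution into $\tilde{g}$); controlling the dependence of this construction on $\|f\|_{L^{\Upsilon}_{\omega}-\mathrm{BMO}}$ with universal constants, uniformly in the scale $r$, is the delicate point of the argument.
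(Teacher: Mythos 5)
There is a genuine structural gap in your argument: the single-scale rescaling cannot be normalized. You set $\tilde{u}(y)=r^{-2}u(x_{0}+ry)$ and then invoke Lemma \ref{Approx} for $\tilde{u}-q$, but that lemma (and likewise Lemma \ref{Lema5.2} and the unit-scale $W^{2,\Upsilon}_{\omega}$ estimates of Theorem \ref{T1} and Proposition \ref{Casolp}) requires $\|\tilde{u}-q\|_{L^{\infty}(\mathrm{B}^{+}_{1})}\le 1$. Your corrector $q$ is chosen only to solve the algebraic equation $F^{\sharp}(M_{q},x_{0})=\tilde{f}_{B}$, not to approximate $u$ to second order at $x_{0}$, so $\|\tilde{u}-q\|_{L^{\infty}}\sim r^{-2}\operatorname{osc}_{\mathrm{B}_{r}(x_{0})}u$ blows up as $r\to 0$ unless one already has a quadratic Taylor-type approximation of $u$ at every scale --- which is precisely the conclusion one is trying to reach. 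The paper's proof supplies exactly this missing ingredient by an induction on dyadic-type scales: it constructs polynomials $P_{k}=a_{k}+b_{k}\cdot x+\tfrac12 x^{t}M_{k}x$ with $F^{\sharp}(M_{k},x)=\tilde{f}_{\mathrm{B}^{+}_{1}}$, $\sup_{\mathrm{B}^{+}_{r^{k}}}|w-P_{k}|\le r^{2k}$, and $|M_{k}-M_{k-1}|\le \mathrm{C}^{\ast}$, so that $w_{m}=(w-P_{m})(r^{m}\cdot)/r^{2m}$ is normalized by the induction hypothesis and Lemma \ref{Lema5.2} can be applied at each step; the matrices $M_{m}$ then play the role of your $M_{r}(x_{0})$, and the oscillation on $\mathrm{B}^{+}_{\rho}$ with $r^{m+1}<\rho\le r^{m}$ is bounded by $\|D^{2}w_{m}\|_{L^{\Upsilon}_{\omega}(\mathrm{B}^{+}_{r^{m}})}\le \mathrm{C}$ via Proposition \ref{Casolp}. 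Without this iteration your scheme cannot start at small $r$.

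Two further points. First, your claimed bound $\mathrm{C}\,r^{\rho}\mathrm{N}+\mathrm{C}\|f\|_{L^{\Upsilon}_{\omega}-\mathrm{BMO}}$ for the oscillation is an overstatement: a uniform $r^{\rho}$ decay modulo a constant would be a Campanato-type (pointwise H\"older) statement for $D^{2}u$, which is not what $f\in \mathrm{BMO}$ yields and is not needed --- the correct and sufficient assertion is simply that the normalized oscillation is bounded by a universal constant at every scale, as in \eqref{estimativa2}. Second, the boundary-compatibility issue you flag at the end (making the oblique condition for $w=\tilde{u}-q-h$ homogeneous after subtracting $q$) is not merely delicate but unresolved in your write-up; in the paper it is handled explicitly by carrying the polynomial through the rescaled boundary operator, i.e.\ by the data $\tilde{g}_{m}(x)=r^{-m}\left(\tilde{g}(r^{m}x)-\beta(r^{m}x)\cdot DP_{m}(r^{m}x)-\gamma(r^{m}x)P_{m}(r^{m}x)\right)$, whose $C^{1,\alpha}$ norm stays universally bounded thanks to the coefficient control (iii) on the $P_{k}$.
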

The proof of Theorem \ref{BMO} will be carried out in two steps. The first step will approximate normalized solutions of the problem along the path of operators $F_{\tau}$ for $\tau$ and $f$ small by quadratic polynomials in the order $r^{2}$ on the half ball $\mathrm {B}^{+}_{r}$. Then we will apply Proposition \ref{Casolp} to a sequence of normalized solutions associated with the initial problem to guarantee the desired result.The analysis that we will present here is similar to that exposed in \cite{Bessa} and \cite{PT} and for that reason we will only comment on the necessary modifications.

\begin{lemma} \label{Lema5.2}
Assume the hypothesis $(H1)-(H3)$ and $(H4)^{\sharp}$. There exist universal constants $\mathrm{C}^{\ast}>0$, $\tau_0 >0$ and $r>0$, such that if $u$ is a (normalized) viscosity solution of
$$
\left\{
\begin{array}{rclcl}
F_{\tau}(D^2 u, x) &=& f(x) & \mbox{in} & \mathrm{B}^+_1,\\
\mathfrak{B}(Du,u,x)&=& g(x) & \mbox{on} & \mathrm{T}_1,
\end{array}
\right.
$$
with $\max\left\{\tau, \,\|f\|_{L^{\Upsilon}_{\omega}-BMO(\mathrm{B}^+_1)}\right\} \le \tau_0$, there exists a quadratic polynomial $\mathfrak{P}$, with $\|\mathfrak{P}\|_{\infty} \le \mathrm{C}^{\ast}$ satisfying
$$
\sup_{\mathrm{B}^+_r} |u(x)-\mathfrak{P}(x)| \le r^2.
$$
\end{lemma}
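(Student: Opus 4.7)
The plan is a compactness/contradiction argument: produce the approximating quadratic polynomial $\mathfrak{P}$ as the second-order Taylor polynomial at the origin of a limit profile $u_{\infty}$ that solves the recession equation, then invoke $(\mathrm{H4})^{\sharp}$ to control the remainder by $|x|^{2+\rho}$.

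First, I would fix the scale $r$ using $(\mathrm{H4})^{\sharp}$. Every viscosity solution $\mathfrak{h}$ of $F^{\sharp}(D^{2}\mathfrak{h},x)=0$ in $\mathrm{B}^{+}_{1}$ with oblique datum $g_0$ on $\mathrm{T}_{1}$ belongs to $C^{2,\rho}(\overline{\mathrm{B}^{+}_{1/2}})$ with universal bound; its second-order Taylor polynomial at $0$ then satisfies $|\mathfrak{h}(x)-\mathfrak{P}_{\mathfrak{h}}(x)|\le \mathrm{c}^{\ast}|x|^{2+\rho}$ on $\mathrm{B}^{+}_{1/2}$ and $\|\mathfrak{P}_{\mathfrak{h}}\|_{L^{\infty}(\mathrm{B}^{+}_{r})}\le \mathrm{C}^{\ast}$ for a universal constant. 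I then pick $r>0$ so small that $\mathrm{c}^{\ast}r^{\rho}\le 1/2$.

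Next, I argue by contradiction. If the lemma fails, one can extract sequences $\tau_{k}\to 0^{+}$, $\|f_{k}\|_{L^{\Upsilon}_{\omega}\text{-}\mathrm{BMO}}\to 0$, operators $F_{\tau_{k}}$ satisfying $(\mathrm{H1})$-$(\mathrm{H3})$, and normalized viscosity solutions $u_{k}$ of $F_{\tau_{k}}(D^{2}u_{k},x)=f_{k}$ in $\mathrm{B}^{+}_{1}$, $\mathfrak{B}(Du_{k},u_{k},x)=g$ on $\mathrm{T}_{1}$, none of which admits a quadratic polynomial of $L^{\infty}$-norm at most $\mathrm{C}^{\ast}$ meeting $\sup_{\mathrm{B}^{+}_{r}}|u_{k}-\mathfrak{P}|\le r^{2}$. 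Classical H\"{o}lder regularity for viscosity solutions of oblique problems makes $\{u_{k}\}$ equicontinuous in $\overline{\mathrm{B}^{+}_{3/4}}$, so Arzel\`{a}-Ascoli yields $u_{k}\to u_{\infty}$ uniformly along a subsequence. Setting $c_{k}:=(f_{k})_{\mathrm{B}^{+}_{1}}$, John-Nirenberg combined with Remark \ref{observacaodeequivalencia} gives $\|f_{k}-c_{k}\|_{L^{p}(\mathrm{B}^{+}_{1})}\to 0$ for every finite $p$; a comparison of $u_{k}$ with a radial quadratic adapted to $c_{k}$ keeps $\{c_{k}\}$ bounded, so a further extraction gives $c_{k}\to c_{\infty}\in\mathbb{R}$. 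Rewriting the equation as $(F_{\tau_{k}}-c_{k})(D^{2}u_{k},x)=f_{k}-c_{k}$ and exploiting $F_{\tau_{k}}-c_{k}\to F^{\sharp}-c_{\infty}$ locally uniformly (Definition \ref{DefAC}), the Stability Lemma \ref{Est} identifies $u_{\infty}$ as a viscosity solution of
\[
F^{\sharp}(D^{2}u_{\infty},x)=c_{\infty}\ \ \text{in}\ \mathrm{B}^{+}_{1},\qquad \mathfrak{B}(Du_{\infty},u_{\infty},x)=g\ \ \text{on}\ \mathrm{T}_{1}.
\]

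To apply $(\mathrm{H4})^{\sharp}$ I would subtract an explicit quadratic $\mathfrak{Q}$ with $F^{\sharp}(D^{2}\mathfrak{Q})\equiv c_{\infty}$, available via uniform ellipticity by solving the monotone scalar equation $F^{\sharp}(a\, e_{1}\otimes e_{1},\cdot)=c_{\infty}$ for $a\in\mathbb{R}$. The auxiliary $w:=u_{\infty}-\mathfrak{Q}$ satisfies the homogeneous equation $\tilde F(D^{2}w,x)=0$ with $\tilde F(\mathrm{X},x):=F^{\sharp}(\mathrm{X}+D^{2}\mathfrak{Q},x)-c_{\infty}$, which retains the same ellipticity and $C^{2,\rho}$ a priori estimates as $F^{\sharp}$ and has an oblique datum still in $C^{1,\alpha}$. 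Hence $u_{\infty}\in C^{2,\rho}(\overline{\mathrm{B}^{+}_{1/2}})$, and its second-order Taylor polynomial $\mathfrak{P}$ at $0$ satisfies $\|\mathfrak{P}\|_{L^{\infty}(\mathrm{B}^{+}_{r})}\le \mathrm{C}^{\ast}$ and $|u_{\infty}-\mathfrak{P}|\le r^{2}/2$ on $\mathrm{B}^{+}_{r}$ by the choice of $r$. The uniform convergence $u_{k}\to u_{\infty}$ then forces $\sup_{\mathrm{B}^{+}_{r}}|u_{k}-\mathfrak{P}|\le r^{2}$ for all sufficiently large $k$, the desired contradiction.

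The main obstacle I anticipate is the nonzero limit $c_{\infty}$: smallness of $f_{k}$ in BMO controls only oscillations, so the limit equation is genuinely inhomogeneous, whereas $(\mathrm{H4})^{\sharp}$ is stated for the homogeneous recession problem. The quadratic subtraction above is the cleanest way around this, and it hinges on the stability of the class of operators enjoying $C^{2,\rho}$ estimates under constant shifts of both the matrix argument and the value. Two secondary technicalities---the a priori boundedness of $\{c_{k}\}$ and the $C^{1,\alpha}$ regularity of the adjusted oblique datum for $w$---are routine and rest on the normalization $\|u_{k}\|_{L^{\infty}}\le 1$ together with $\beta,\gamma,g\in C^{1,\alpha}(\overline{\mathrm{T}_{1}})$.
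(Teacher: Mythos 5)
Your compactness/contradiction scheme is essentially the paper's own route: the paper proves this lemma simply by invoking the compactness arguments of \cite[Lemmas 4.1--4.2 and Corollary 4.3]{Bessa} together with the embedding $L^{\Upsilon}_{\omega}\hookrightarrow L^{p}$ of Lemma \ref{mergulhoorliczlebesgue}, and your write-up (H\"older equicontinuity and Arzel\`a--Ascoli, stability toward a recession profile with constant right-hand side $c_{\infty}=\lim (f_k)_{\mathrm{B}^+_1}$ via John--Nirenberg and Remark \ref{observacaodeequivalencia}, then the $C^{2,\rho}$ estimate of $(H4)^{\sharp}$ to build the Taylor polynomial at a universal scale $r$) is a spelled-out version of exactly that. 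The two points you label routine --- boundedness of the averages $c_k$ (indeed correct, by comparison with an interior paraboloid using $\|u_k\|_{L^\infty}\le 1$) and the persistence of the $(H4)^{\sharp}$ estimate after subtracting a quadratic with $F^{\sharp}(D^2\mathfrak{Q},x)\equiv c_{\infty}$ (delicate when $F^{\sharp}$ genuinely depends on $x$) --- are precisely the points the paper also leaves implicit, since in the proof of Theorem \ref{BMO} it likewise asserts without argument that the shifted operator ``still satisfies the structural assumption $(H4)^{\sharp}$'' and that matrices with $F^{\sharp}(M_k,x)=\tilde f_1$ for all $x$ exist.
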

\begin{proof}
The proof is based on compactness arguments as in Lemma 4.1 and Lemma 4.2 of \cite{Bessa} with the following observation: as $f\in L^{\Upsilon}_{\omega}(\mathrm{B}^{+}_{1}) $ by the Lemma \ref{mergulhoorliczlebesgue}, we have $\|f\|_{L^{p}(\mathrm{B}^{+}_{1})}\leq \mathrm{C}\|f\|_{L^{\Upsilon}_{\omega}(\mathrm{B}^{+}_{1})}$. Hence, analogously to \cite[Corollary 4.3]{Bessa} and obtain such desired quadratic polynomial with the desired constants.
\end{proof}
\begin{proof}[\textbf{Proof of Theorem \ref{BMO}}] 
Let $u$ be a solution of \eqref{problemalocal}. By normalization argument we can taken the $\kappa\in(0,1)$ to be determined a posteriori and define the function $w(x)=\kappa u(x)$ such that is normalized $L^{p}$-viscosity solution of 
\begin{eqnarray*}
\left\{
\begin{array}{rclcl}
F_{\tau}(D^2 w, x) &=& f(x) & \mbox{in} & \mathrm{B}^+_1,\\
\tilde{\mathfrak{B}}(Dw,w,x)&=& g(x) & \mbox{on} & \mathrm{T}_1,
\end{array}
\right.
\end{eqnarray*}
where $\tau:=\kappa$, $\tilde{f}(x)=\kappa f(x)$,  $\tilde{\mathfrak{B}}(q,s,x)=\beta(x)\cdot q+\gamma(x)s$, and $\tilde{g}(x):=\kappa g(x)$. We choose the constant $\kappa$ such that $\max\{\tau,\mathrm{A}^{-1}\mathrm{B}\|\tilde{f}\|_{L^{\Upsilon}_{\omega}(\mathrm{B}^{+}_{1})} \}\leq \tau_0$ where $\tau_0$ comes from Lemma \ref{Lema5.2} and the constant $\mathrm{A},\mathrm{B}>0$ are the universal constants in \eqref{observacaodeequivalencia}. Because $\|\cdot\|_{L^{\Upsilon}_{\omega}(\mathrm{B}^{+}_{1})}$ is positively homogeneous and by the definition of $w$, prove the estimate $L^{\Upsilon}_{\omega}-BMO$ for $w$ guarantees
the desired one for $u$ and so we will prove the desired estimate for $w$. To this end, we assert that there exists a sequence of quadratic polynomials $(P_{k})_{k\in \mathbb{N}}$ in form 
\begin{eqnarray}
P_{k}(x)=a_{k}+b_{k}\cdot x+\frac{1}{2}x^{t}M_{k}x
\end{eqnarray}
satisfying the following conditions:
\begin{itemize}
\item [(i)] $F^{\sharp}(M_{k},x)=\tilde{f}_{1}=:\tilde{f}_{\mathrm{B}^{+}_{1}}$.
\item[(ii)] $\displaystyle\sup_{\mathrm{B}^{+}_{r^{k}}}|w-P_{k}|\leq r^{2k}, \ \forall k\geq 1$, where $r\in\left(0,\frac{1}{2}\right)$ is the radius of semi-ball in Lemma \ref{Lema5.2}.
\item[(iii)] For all $k\geq 0$,  $r^{2(k-1)}|a_{k}-a_{k-1}|+r^{k-1}|b_{k}-b_{k-1}|+|M_{k}-M_{k-1}|\leq \mathrm{C}^{\ast}r^{2(k-1)}$, where $\mathrm{C}^{\ast}$ is the constant of Lemma \ref{Lema5.2}.
\end{itemize}
We prove the claim by finite induction process. In effect, we set $P_{0}=P_{-1}=\frac{1}{2}x^{t}M_{0}x$, for $M_{0}\in Sym(n)$ such that $F^{\sharp}(M_{0},x)=\tilde{f}_{1}$ ande the first step is clearly satifies (case k=0). Now, suppose that for $k=0,1,\cdots,m$ have been checked and define the following auxiliar function $w_{m}:\mathrm{B}^{+}_{1}\cup\mathrm{T}_{1}\to \mathbb{R}$ by
\begin{eqnarray*}
w_{m}(x)=\frac{(w-P_{m})(r^{m}x)}{r^{2m}}
\end{eqnarray*}
By induction hypothesis, $w_{m}$ is normalized function and satisfies in viscosity sense
to
$$
\left\{
\begin{array}{rclcl}
F_{\tau}(D^2 w_{m}+M_{m}, r^{m}x) &=&f_{m}(x)=\tilde{f}(r^{m}x) & \mbox{in} & \mathrm{B}^+_1\\
\mathfrak{B}_{m}(Dw,w,x) &=& \tilde{g}_{m}(x)  & \mbox{on} & \mathrm{T}_{1}.
\end{array}
\right.
$$
where
$$ 
\left\{
\begin{array}{rcl}
\mathfrak{B}_{m}(q,s,x)&=&\beta_{m}(x)\cdot\frac{q}{r^{m}}+\gamma_{m}(x)s \\
\beta_{m}(x) & \defeq &\beta(r^{m}x)\\
\gamma_{m}(x) & \defeq & r^{m}\gamma(r^{m}x)\\
\tilde{g}_{m}(x)  & \defeq &r^{-m}\left( \tilde{g}(r^{m}x)-\beta(r^{m}x) \cdot DP_{m}(r^{m}x)-\gamma(r^{m}x)P_{m}(r^{m}x)\right)
\end{array}
\right.	
$$
By the definition of the $f_{m}$ and \eqref{observacaodeequivalencia} it follows that
\begin{eqnarray*}
\|f_{m}\|_{L^{\Upsilon}_{\omega}-BMO(\mathrm{B}^{+}_{1})}\leq \mathrm{B}\|f_{m}\|_{BMO(\mathrm{B}^{+}_{1})}=\mathrm{B}\|\tilde{f}\|_{BMO(\mathrm{B}^{+}_{r})}\leq \mathrm{B}\mathrm{A}^{-1}\|\tilde{f}\|_{L^{\Upsilon}_{\omega}-BMO(\mathrm{B}^{+}_{1})}\leq \tau_{0}.
\end{eqnarray*}
And how does the $F_{m}(M,x)=F(M+M_{m},r^{m}x)$ operator satisfy, $F^{\sharp}_{m}(M,x)=F^{\sharp}(M+M_{m},r^{m}x)$ it follows that $F^{\sharp}_{m}(M_{k},x)=\tilde{f}_{1}$. Furthermore, $F^{\sharp}_{m}$ still satisfies the structural assumption $(H4)^{\sharp}$. Therefore, we can invoke the lemma \ref{Lema5.2} and obtain the existence of a quadratic polynomial $\tilde{P}$ of the form $\tilde{P}(x)=\tilde{a}+\tilde{b }\cdot x+\frac{1}{2}x^{t}\tilde{M}x$ such that $\|\tilde{P}\|_{\infty}\leq \mathrm{C}^{\ast}$ and
\begin{eqnarray}\label{estimativa1}
\sup_{\mathrm{B}^{+}_{r}}|w_{m}-\tilde{P}|\leq r^{2}.
\end{eqnarray} 
Choosing $a_{m+1}=a_{m}+r^{2m}\tilde{a}$, $b_{m+1}=b_{m}+r^{m}\tilde{b}$ and $M_{m+1}=M_{m}+\tilde{M}$ we can rescale \eqref{estimativa1} and obtain that
\begin{eqnarray*}
\sup_{\mathrm{B}^{+}_{r^{m+1}}}|w-P_{m+1}|\leq r^{2(m+1)}
\end{eqnarray*}
Henceforth, by the definitions of $a_{m+1}, b_{m+1}$ and $M_{m+1}$ naturally follows condition (ii) for the natural number  $k=m+1$. This proves the statement.

Finally, for any $\rho\in\left(0,\frac{1}{2}\right)$, choose $k$ such that $0<r^{m+1}<\rho\leq r^{m}$. We have the following estimate
\begin{eqnarray}\label{estimativa2}
\frac{\|(D^{2}w-M_{m})\chi_{\mathrm{B}^{+}_{\rho}}\|_{L^{\Upsilon}_{\omega}(\mathrm{B}^{+}_{1/2})}}{\|\chi_{\mathrm{B}^{+}_{\rho}}\|_{L^{\Upsilon}_{\omega}(\mathrm{B}^{+}_{1/2})}}&\leq&\mathrm{C} \frac{\|(D^{2}w-M_{m})\chi_{\mathrm{B}^{+}_{r^{m}}}\|_{L^{\Upsilon}_{\omega}(\mathrm{B}^{+}_{1/2})}}{\|\chi_{\mathrm{B}^{+}_{r^{m}}}\|_{L^{\Upsilon}_{\omega}(\mathrm{B}^{+}_{1/2})}}\nonumber\\
&=&\mathrm{C} \frac{\|D^{2}w_{m}\|_{L^{\Upsilon}_{\omega}(\mathrm{B}^{+}_{r^{m}})}}{\|\chi_{\mathrm{B}^{+}_{r^{m}}}\|_{L^{\Upsilon}_{\omega}(\mathrm{B}^{+}_{1/2})}}\nonumber\\
&\leq&\mathrm{C}\|D^{2}w_{m}\|_{L^{\Upsilon}_{\omega}(\mathrm{B}^{+}_{r^{m}})}\nonumber\\
&\leq& \mathrm{C},
\end{eqnarray}
where where in the penultimate inequality we use the Lemma \ref{mergulhoorliczlebesgue} and in the last one the Proposition \ref{Casolp}. So we can conclude of \eqref{estimativa2} it follows
\begin{eqnarray*}
\frac{\|(D^{2}w-(D^{2}w)_{\mathrm{B}^{+}_{\rho}})\chi_{\mathrm{B}^{+}_{\rho}}\|_{L^{\Upsilon}_{\omega}(\mathrm{B}^{+}_{1/2})}}{\|\chi_{\mathrm{B}^{+}_{\rho}}\|_{L^{\Upsilon}_{\omega}(\mathrm{B}^{+}_{1/2})}}&\leq& \frac{\|(D^{2}w-M_{m})\chi_{\mathrm{B}^{+}_{\rho}}\|_{L^{\Upsilon}_{\omega}(\mathrm{B}^{+}_{1/2})}}{\|\chi_{\mathrm{B}^{+}_{\rho}}\|_{L^{\Upsilon}_{\omega}(\mathrm{B}^{+}_{1/2})}}+\\
&+&\frac{\|(D^{2}w)_{\mathrm{B}^{+}_{\rho}}-M_{m})\chi_{\mathrm{B}^{+}_{\rho}}\|_{L^{\Upsilon}_{\omega}(\mathrm{B}^{+}_{1/2})}}{\|\chi_{\mathrm{B}^{+}_{\rho}}\|_{L^{\Upsilon}_{\omega}(\mathrm{B}^{+}_{1/2})}}\\
&\leq&2\frac{\|(D^{2}w-M_{m})\chi_{\mathrm{B}^{+}_{\rho}}\|_{L^{\Upsilon}_{\omega}(\mathrm{B}^{+}_{1/2})}}{\|\chi_{\mathrm{B}^{+}_{\rho}}\|_{L^{\Upsilon}_{\omega}(\mathrm{B}^{+}_{1/2})}}\\
&\leq&\mathrm{C}.
\end{eqnarray*}
Thus
\begin{eqnarray*}
\|D^{2}w\|_{L^{\Upsilon}_{\omega}-BMO\left(\mathrm{B}^{+}_{\frac{1}{2}}\right)}\leq \sup_{0<\rho<\frac{1}{2}}\frac{\|(D^{2}w-(D^{2}w)_{\mathrm{B}^{+}_{\rho}})\chi_{\mathrm{B}^{+}_{\rho}}\|_{L^{\Upsilon}_{\omega}(\mathrm{B}^{+}_{1/2})}}{\|\chi_{\mathrm{B}^{+}_{\rho}}\|_{L^{\Upsilon}_{\omega}(\mathrm{B}^{+}_{1/2})}}\leq  \mathrm{C}<+\infty
\end{eqnarray*}
which ends the proof of the theorem.
\end{proof}
We end this section with the following two remarks:
\begin{itemize}
\item [(i)] The Theorem \ref{BMO} together with the Remark \ref{observacaodeequivalencia} imply that $D^{2}u\in BM	O(\mathrm{B}^{+}_{1})$ with estimate
\begin{eqnarray*}
\|D^{2}u\|_{BMO\left(\mathrm{B}^{+}_{\frac{1}{2}}\right)}\leq\mathrm{C}(\|u\|^{n}_{L^{\infty}(\mathrm{B}^{+}_{1})}+\|f\|_{L^{\Upsilon}_{\omega}(\mathrm{B}^{+}_{1})}+\|g\|_{C^{1,\alpha}(\overline{\mathrm{T}_{1}})}),
\end{eqnarray*}
for the constant $\mathrm{C}>0$ depending only on the same entities as the constant in Theorem \ref{BMO}.
\item[(ii)] Via the embedding of the BMO spaces in \cite[Lemma 1]{AZBED} we can guarantee from item (i) that $Du\in C^{0,Log-Lip}\left(\mathrm{B}^{+} _{\frac{1}{2}}\right)$ with the following estimate,
\begin{eqnarray*}
|Du(x)-Du(y)|\leq -\mathrm{C}\mathcal{D}|x-y|\log|x-y|, \ \forall x,y\in\mathrm{B}^{+}_{\frac{1}{2}}, x\neq y,
\end{eqnarray*}
where
\begin{eqnarray*}
\mathcal{D}=(\|u\|^{n}_{L^{\infty}(\mathrm{B}^{+}_{1})}+\|f\|_{L^{\Upsilon}_{\omega}(\mathrm{B}^{+}_{1})}+\|g\|_{C^{1,\alpha}(\overline{\mathrm{T}_{1}})}).
\end{eqnarray*}
and the constant $\mathrm{C}>0$ depends only on  $n$, $\lambda$,  $\Lambda$, $p_{0}$, $\omega$, $i(\Phi)$,  $\mathrm{c}^{\ast}$, $\mu_0$, $\|\beta\|_{C^{1, \alpha}(\overline{\mathrm{T}_1})}$ and $\|\gamma\|_{C^{1, \alpha}(\overline{\mathrm{T}_1})}$.
\end{itemize} 
\subsection*{Acknowledgments}

\hspace{0.4cm} J.S. Bessa was partially supported by CAPES-Brazil under Grant No. 88887.482068/2020-00. The author would like to thanks Department of Mathematics of Universidade Federal do Ceará (UFC-
Brazil) by the pleasant scientific and research atmosphere.

\end{document}